\documentclass[12pt]{amsart}

\usepackage[utf8]{inputenc}
\usepackage{fullpage}
\usepackage[pagewise]{lineno}
\usepackage[colorlinks,linkcolor=blue,citecolor=blue]{hyperref}
\usepackage{graphicx}
\usepackage{amsmath}
\usepackage{amsthm}
\usepackage{amsfonts}
\usepackage{amssymb}
\usepackage{mathtools}
\usepackage{array}   
\newcolumntype{L}{>{$}l<{$}}

\usepackage{pgf,tikz}
\usepackage{mathrsfs}
\usetikzlibrary{arrows}
\newtheorem{Thm}{Theorem}[section]
\newtheorem{thm}{Theorem}[subsection]
\newtheorem{cor}[thm]{Corollary}
\newtheorem{prop}[thm]{Proposition}
\newtheorem{lem}[thm]{Lemma}
\newtheorem{conj}[Thm]{Conjecture}

\newtheorem{defn}[thm]{Definition}

\theoremstyle{definition}

\theoremstyle{remark}
\newtheorem{ques}[Thm]{Question}
\newtheorem{rem}[thm]{Remark}

\usepackage{eucal}
\usepackage{graphicx}
\usepackage{multirow}
\usepackage[all,knot]{xy}

\DeclareMathOperator{\Mod}{Mod}
\DeclareMathOperator{\PGL}{PGL}
\DeclareMathOperator{\PSL}{PSL}
\DeclareMathOperator{\SL}{SL}
\DeclareMathOperator{\GL}{GL}
\DeclareMathOperator{\SMod}{SMod}
\DeclareMathOperator{\End}{End}
\DeclareMathOperator{\Add}{Add}
\DeclareMathOperator{\Gal}{Gal}
\DeclareMathOperator{\Tr}{Tr}
\headheight=5pt \headsep=18pt
\footskip=18pt
\textheight=47pc \topskip=10pt
\textwidth=37pc
\calclayout
\newcommand{\RNum}[1]{\uppercase\expandafter{\romannumeral #1\relax}}

\newcommand{\Tet}[6]{\Big\langle\begin{matrix}
  #1 & #2 & #3 \\
  #4 & #5 & #6 
\end{matrix}\Big\rangle}
\newcommand{\Sym}[6]{\Big\{\begin{matrix}
  #1 & #2 & #3 \\
  #4 & #5 & #6 
\end{matrix}\Big\}}
\usepackage{graphicx}
\graphicspath{ {./graph/} }

\begin{document}
\title{Projective representations of Hecke groups from Topological quantum field theory}
\author{Yuze Ruan}
\email{ryz22@mails.tsinghua.edu.cn}

\maketitle
\begin{abstract}
  We construct projective (unitary) representations of Hecke groups from the vector spaces associated with the Witten-Reshetikhin-Turaev topological quantum field theory of higher genus surfaces. In particular, we generalize the modular data of Temperley-Lieb-Jones modular categories. We also study some properties of the representation. We show the image group of the representation is infinite at low levels in genus $2$ by explicit computations. We also show the representation is reducible with at least three irreducible summands when the level equals $4l+2$ for $l\geq 1$.     
\end{abstract}
\section{Introduction}
In 1983, Vaughan Jones discovered a new family of representations of braid groups, from the study of the index of II$_1$ subfactors \cite{Jones83, Jon87}, which in turn gave a beautiful new invariant for knots, known as the Jones polynomials. Later, Witten in his seminal paper \cite{Wit89} uncovered a deep connection between the Jones polynomial and the Chern-Simons gauge theory. He further provided arguments for constructing $2+1$ topological quantum field theory (TQFT) based on it. The first rigorous mathematical construction of TQFT was developed by  Reshetikhin and Turaev, using quantum groups $U_q(\mathfrak{sl}_2)$ \cite{RT91}. A skein theoretical construction was given by Blanchet, Habegger, Masbaum, and Vogel \cite{BHMV1, BHMV2} (pioneered by Lickorish \cite{Lickorish93}). Turaev carried out the most general construction in \cite{Tur94}, which employs modular categories as input data. 

In essence, a $2+1$ TQFT is a symmetric monoidal functor from the cobordism category to the category of modules over some ring (often equipped with additional structure),  we denote the module associated to a surface $\Sigma$ by $V(\Sigma)$, and denote the homomorphism associated to a cobordism $M$ by $Z(M)$. A fascinating feature of TQFTs is their ability to produce projective representations $\Mod(\Sigma)\to \PGL(V(\Sigma))$ of the mapping class groups $\Mod(\Sigma)$, These representations are known to be asymptotically faithful \cite{FKW02,Anderson06}. In particular, when $\Sigma$ is an $n$-punctured disc, the corresponding mapping class group is the braid group $B_n$, and the representations, derived from $U_q(\mathfrak{sl}_2)$, recover the Jones representations \cite{Jon87}. For a genus-1 surface, $\Mod(\Sigma)$ is isomorphic to $\SL_2(\mathbb{Z})$. The images of the generators 
$
s=\begin{pmatrix}
0&-1\\
1&0
    \end{pmatrix},\  
t=\begin{pmatrix}
1&1\\
0&1
\end{pmatrix}, 
$  yield the modular $S$- and $T$-matrices, referred as the modular data of the input modular categories. While it is not a complete invariant for modular categories \cite{MS21}, they encode many information and play a crucial role in the classification of the modular categories \cite{RSW09, Rank5}, as well as  in the classification of partition functions in the conformal field theory \cite{CIZ87}. Moreover, In \cite{NS10}, Ng and Schauenburg demonstrated that the kernel of this representation is a congruence subgroup of $\SL_2(\mathbb{Z})$, in particular, the image is finite. 

In this paper, we focus on the skein version of TQFT constructed in \cite{BHMV2}, or equivalently the TQFT derived from Temperley-Lieb-Jones (TLJ) categories \cite{Wang10}, \cite[Chapter~XII]{Tur10}. We generalize the modular data of TLJ-categories through the mapping class group representation of higher genus surfaces, and we find the representations of Hecke groups $\Gamma_q\in \PSL_2(\mathbb{R})$ ($\tilde{\Gamma}_q\in \SL_2(\mathbb{R})$), which are generated by $t_q=\begin{pmatrix}
1&2\cos(\frac{\pi}{q})\\
0&1
\end{pmatrix}, \ 
s=\begin{pmatrix}
0&-1\\
1&0
\end{pmatrix}$ ($q\geq 3$), and the image of $t_q$ is a diagonal matrix. 
\begin{Thm}\label{mainthm0}
We have projective (unitary) representations $h_r$ (where $r$ is the level of the theory) of $\tilde{\Gamma}_{2g+1}$ from the TQFT vector spaces $V_r(\Sigma_g)$. In particular, when $g=1$, $h_r(t_3),h_r(s)$ provides the modular data of TLJ modular categories. When $g=2$, we define 
$\mathcal{J}:=h_{r}(s)$ and 
a diagonal matrix $\mathcal{T}_r:=h_r(t_5)$. They  satisfy the relations: 
\begin{equation}
\begin{aligned}
\mathcal{J}_r^2&=I\ ,\\
(\mathcal{T}_r\mathcal{J}_r)^5&=(\frac{\mathcal{P}_r^+}{\mathcal{P}_r^-})^2I\ .
\end{aligned}
\end{equation}
\end{Thm}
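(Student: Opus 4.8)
The plan is to realize $h_r$ as the restriction of the skein-theoretic WRT representation of \cite{BHMV2} of the mapping class group $\Mod(\Sigma_g)$ on $V_r(\Sigma_g)$ to the subgroup generated by two carefully chosen mapping classes, and then to verify that these two classes satisfy the defining relations of $\tilde{\Gamma}_{2g+1}$ projectively. First I would fix a pants decomposition of $\Sigma_g$ whose dual trivalent spine is adapted to the order-$(4g+2)$ periodic symmetry of $\Sigma_g$; the admissible $r$-colorings of this spine furnish the standard basis of $V_r(\Sigma_g)$. I take $\hat{t}_q$ to be the product of the right-handed Dehn twists along the meridian cutting curves of the decomposition and $\hat{s}$ to be the order-four homeomorphism performing the $S$-move on the handles, and set $\mathcal{T}_r := h_r(t_q) = Z(\hat{t}_q)$ and $\mathcal{J}_r := h_r(s) = Z(\hat{s})$. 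Since a Dehn twist acts on a colored spine by the scalar twist coefficient $\theta_i$ of the color $i$ threading the twisted curve, $\mathcal{T}_r$ is automatically diagonal in this basis, giving the first assertion; its diagonal entries are products of the $\theta_i$.

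For $g=1$ the spine degenerates to the single meridian of the torus, $\Mod(\Sigma_1)=\SL_2(\mathbb{Z})=\tilde{\Gamma}_3$ (here $2\cos(\pi/3)=1$, so $t_3=T$), and $\hat{t}_3,\hat{s}$ are exactly the homeomorphisms defining the modular $T$- and $S$-matrices; hence $h_r(t_3)$ and $h_r(s)$ are by definition the TLJ modular data. This base case also calibrates the normalization constants — in particular the Gauss sums $\mathcal{P}_r^{\pm}=\sum_i d_i^2\theta_i^{\pm1}$, with $\mathcal{P}_r^+\mathcal{P}_r^-=D^2$ for $D^2=\sum_i d_i^2$ — that reappear in the genus-two anomaly.

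For $g=2$ I would treat the two relations separately. The identity $\mathcal{J}_r^2=I$ I expect to prove by writing $\mathcal{J}_r$ explicitly as the genus-two analogue of the symmetric $S$-matrix, its entries assembled from $S$-matrix entries and $6j$-symbols via the $S$-move on the theta-spine with basis the admissible triples $(a,b,c)$, and then checking $\mathcal{J}_r\mathcal{J}_r=I$ by the Verlinde orthogonality of the $S$-matrix together with the self-duality of the TLJ labels (which trivializes the charge-conjugation factor that would otherwise appear in $\mathcal{J}_r^2$). For the second relation the geometric input is that $\hat{t}_5\hat{s}$ is a lift of the order-ten periodic mapping class of $\Sigma_2$, with $(\hat{t}_5\hat{s})^5$ isotopic to the hyperelliptic involution; since the latter acts on $V_r(\Sigma_2)$ by the (trivial, for self-dual labels) charge-conjugation operator, functoriality of $Z$ already forces $(\mathcal{T}_r\mathcal{J}_r)^5$ to be a scalar matrix, and the entire problem collapses to evaluating that scalar.

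Evaluating the scalar is where I expect the real work to be, and I would mirror the proof of the genus-one identity expressing $(ST)^3$ as a Gauss-sum multiple of $S^2$, which itself reduces $(ST)^3$ to $\mathcal{P}_r^+$. The direct route is to multiply $(\mathcal{T}_r\mathcal{J}_r)^5$ out using the diagonal form of $\mathcal{T}_r$ and the explicit entries of $\mathcal{J}_r$, repeatedly collapsing the intermediate sums of twist coefficients into $\mathcal{P}_r^{\pm}$ and using $\mathcal{P}_r^+\mathcal{P}_r^-=D^2$ until only the factor $(\mathcal{P}_r^+/\mathcal{P}_r^-)^2$ survives; the more conceptual route is to identify the projective anomaly of the periodic class $(\hat{t}_5\hat{s})^5$ with the framing/signature correction of its mapping torus, which in the WRT normalization contributes a power of $\mathcal{P}_r^+/D$, and to track that power through the order-five relation. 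The delicate point, and the main obstacle, is pinning the scalar exactly rather than merely up to a root of unity; the bookkeeping of the twist contributions in the fivefold product is intricate, which is precisely why carrying out the genus-one case first to fix all phase conventions is essential.
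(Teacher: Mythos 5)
Your high-level strategy is genuinely the paper's (restrict the BHMV representation to a two-generator subgroup of $\Mod(\Sigma_g)$, use the geometric relation $(\hat{t}_5\hat{s})^5\simeq\iota_2$, and evaluate the resulting scalar as a framing/signature anomaly), but the first real gap is that the group-theoretic core is asserted rather than proved. Restricting a projective representation of $\Mod(\Sigma_2)$ to $\langle \hat{t}_5,\hat{s}\rangle$ only yields a representation of $\tilde{\Gamma}_5$ if these mapping classes actually satisfy the Hecke relations, and nothing in your proposal establishes that your ``order-four $S$-move'' $\hat{s}$ exists with $\hat{s}^2=\iota_2$ and $(\hat{t}_5\hat{s})^5=\iota_2$, nor why the parameter $2\cos(\pi/(2g+1))$ appears at all. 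The paper's Section 3 is devoted to exactly this: by Thurston's construction the multitwists along $A=\{c_1,c_3,\dots,c_{2g-1}\}$ and $B=\{c_2,\dots,c_{2g}\}$ map to parabolics with trace parameter the Perron--Frobenius eigenvalue $2\cos\frac{\pi}{2g+1}$ of the $A_{2g}$ chain, the kernel of this map is shown to be exactly $\langle\iota_g\rangle$ (Proposition \ref{kernal}, via the relation $(T_{\tilde{A_g}}T_{\tilde{B_g}})^{2g+1}=\iota_g$), and the element $T_J$ realizing $s$ is identified both as the explicit word $\iota_g^g(T_{\tilde{A_g}}T_{\tilde{B_g}})^gT_{\tilde{A_g}}$ and geometrically as $\iota_g^g uS$ for a Heegaard splitting $S$ of $S^3$ (Theorem \ref{Thm:Tj}, Lemma \ref{Lem:Tj}) --- the latter identification being what makes $h_r(s)$ computable at all. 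Note also a hazard in your setup: the diagonal generator must be the multitwist along the $g$ odd meridians only; if your ``meridian cutting curves of the decomposition'' include all pants curves (three in genus two), the diagonal acquires an extra $\theta_k$ and the subgroup generated is no longer a Hecke group with the stated $\lambda$.

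The second gap is the exact scalar evaluation, which is where the paper's technical machinery lives and where your two sketched routes do not go through as stated. Your Verlinde-orthogonality argument for $\mathcal{J}_r^2=I$ does not apply: in genus two the entries of $\mathcal{J}_r$ are sums of products of two tetrahedral coefficients (equation (\ref{eq:Jmatrix})), not assembled from genus-one $S$-matrix entries, and since $\hat{s}^2=\iota_2$ only forces $\mathcal{J}_r^2$ to be a scalar, pinning that scalar to $1$ is a genuine normalization computation; the paper does it in Lemma \ref{lem:cvalue} by evaluating the $(1,1)$-entry of $Z_r(T_J)^2$, reducing the cross-terms to $D_r^{-2g}\prod_{e}\Delta_{\sigma(e)}$ and summing them by evaluating $g$ unknots colored by $\Omega_r$ in two ways via the partition of identity. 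Similarly, your ``direct route'' of collapsing Gauss sums in the fivefold product entrywise is intractable for these entries, and your ``conceptual route'' is the paper's idea but missing its operative mechanism: Roberts' two projective actions, the geometric $\phi_1$ (defined on classes extending over a handlebody, anomaly-free there) and the skein-theoretic $\phi_2$ (carrying a factor of $\kappa_r$ per twist), agree projectively, and comparing them on the vacuum vector $u_0$ against the $T_J$-pairing converts the total anomaly into $\kappa_r^{\tilde{\varsigma}(w)+e(w)}$ (Lemma \ref{lem:anomly}), where $\tilde{\varsigma}(w)$ is the signature of the explicit ten-component surgery link of the word $w=(T_{\tilde{A_g}}T_{\tilde{B_g}})^{g}T_{\tilde{A_g}}$; for $g=2$ one computes $\tilde{\varsigma}(w)=-6$ and $e(w)=10$, giving $\kappa_r^{4}=(\mathcal{P}_r^+/\mathcal{P}_r^-)^2$. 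Your own caveat --- that you can pin the scalar only up to a root of unity --- is precisely the unresolved point, and calibrating phases at genus one does not fix it, because the anomaly depends on the signature of the genus-two surgery presentation, not merely on conventions inherited from the torus.
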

It's natural to ask the following 

\begin{ques}
Is the image of $h_r$ finite for $g\geq 2$?
\end{ques}

We perform some concrete calculations for the genus-$2$ case and obtain the following results:
\begin{Thm}\label{iftyimg}
The group $h_r(\Gamma_5)$ is infinite for $r=3,7,9,11,13$.
\end{Thm}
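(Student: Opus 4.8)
The plan is to reduce the statement to producing, for each level $r\in\{3,7,9,11,13\}$, a single group element whose image under $h_r$ has infinite order in $\PGL(V_r(\Sigma_2))$. Indeed, by a classical theorem of Schur a finitely generated torsion subgroup of $\GL_d(\mathbb{C})$ is finite; since $\Gamma_5$ is generated by $s$ and $t_5$, the image $h_r(\Gamma_5)$ is finitely generated, so it is infinite as soon as some word $w$ in $\mathcal{J}_r=h_r(s)$ and $\mathcal{T}_r=h_r(t_5)$ has infinite projective order. Equivalently, writing $A=h_r(w)\in\GL_d$, it suffices to exhibit two eigenvalues $\lambda_i,\lambda_j$ of $A$ whose ratio $\lambda_i/\lambda_j$ is not a root of unity, since $A$ has finite order in $\PGL$ precisely when all such ratios are roots of unity.

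First I would record explicit matrices for $\mathcal{J}_r$ and $\mathcal{T}_r$ from the genus-$2$ modular data constructed earlier in the paper. By construction their entries lie in a cyclotomic field $K=\mathbb{Q}(\zeta)$ with $\zeta=e^{\pi i/r}$, so for any fixed word $w$ the matrix $A=w(\mathcal{J}_r,\mathcal{T}_r)$ lies in $\GL_d(K)$ and its characteristic polynomial $\chi_A$ has coefficients in $K$. The central difficulty is that $h_r$ is projectively \emph{unitary}: every eigenvalue ratio $\lambda_i/\lambda_j$ automatically lies on the unit circle, so one cannot certify ``not a root of unity'' by simply exhibiting an eigenvalue off the circle.

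To get around this I would use Galois conjugation. For $\sigma\in\Gal(K/\mathbb{Q})$, applying $\sigma$ entrywise sends $A$ to a matrix $A^{\sigma}$ with $\chi_{A^{\sigma}}=\sigma(\chi_A)$, whose eigenvalues are the images $\sigma(\lambda_i)$ under any extension of $\sigma$ to a splitting field. If every ratio $\lambda_i/\lambda_j$ were a root of unity, then for \emph{every} embedding $\sigma$ the numbers $\sigma(\lambda_i)/\sigma(\lambda_j)=\sigma(\lambda_i/\lambda_j)$ would again be roots of unity, hence all eigenvalues of $A^{\sigma}$ would share a common modulus. Thus it suffices to find one automorphism $\sigma$ (necessarily other than complex conjugation, which preserves moduli) for which $A^{\sigma}$ has two eigenvalues of distinct absolute value; this single observation forces $w$ to have infinite order and hence $h_r(\Gamma_5)$ to be infinite.

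Concretely, for each $r$ I would search among short words --- for instance $w=\mathcal{J}_r\mathcal{T}_r\mathcal{J}_r\mathcal{T}_r^{k}$ or a commutator of $\mathcal{J}_r$ with a power of $\mathcal{T}_r$ --- first locating a promising candidate numerically and then confirming the conclusion by exact arithmetic in $K$: compute $\chi_A$, apply a chosen $\sigma$, and verify that $\chi_{A^{\sigma}}$ has roots of differing modulus (equivalently, that $\chi_A$ fails to be a product of cyclotomic-type factors once the scalar ambiguity of the projective representation is normalized away). I expect the main obstacle to be computational rather than conceptual: the dimension $d=\dim V_r(\Sigma_2)$ grows with $r$ through the Verlinde formula, so for $r=11,13$ the matrices and their characteristic polynomials become large and the certification must be carried out in exact cyclotomic arithmetic to be rigorous. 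A secondary subtlety is that there is no uniform argument across levels --- the analogous genus-$1$ representations have finite image --- so the existence of an infinite-order word is genuinely a level-by-level phenomenon, and the search must actually succeed for each listed $r$.
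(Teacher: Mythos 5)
Your strategy coincides in its essentials with the paper's: the paper also works with a single short word, namely $\mathcal{J}_r\mathcal{T}_r\mathcal{J}_r\mathcal{T}_r^{-1}$ (the image of the pseudo-Anosov element $A_5B_5^{-1}$, an instance of your family $\mathcal{J}_r\mathcal{T}_r\mathcal{J}_r\mathcal{T}_r^{k}$), and it also invokes Galois conjugation to escape unitarity, after observing that the entries in the \emph{unnormalized} basis lie in the cyclotomic field $\mathbb{Q}(A)$ because only even powers of $D_r$ and $\kappa_r$ occur in genus $2$ --- a point your ``by construction'' glosses over, since the normalized unitary basis introduces square roots. Where you differ is the certificate extracted after conjugating (the paper specializes to $A=e^{\pi i/r}$). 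You propose exhibiting two eigenvalues of $A^{\sigma}$ of distinct modulus, which requires handling the characteristic polynomial; the paper uses something much lighter: a matrix of finite order in dimension $d_r(2)$ has trace of absolute value at most $d_r(2)$, so a single numerical evaluation of $\Tr(\mathcal{J}_r\mathcal{T}_r\mathcal{J}_r\mathcal{T}_r^{-1})$ exceeding $d_r(2)$ (values $32.16,\,102.92,\,332.49,\,1084.12$ against dimensions $30,\,55,\,91,\,140$) settles $r=7,9,11,13$ with no eigenvalue computation at all. For $r=3$ the trace test fails ($4.24<5$), and the paper instead certifies directly at the \emph{unitary} specialization: the eigenvalue $\frac{1}{4}\bigl(3-\sqrt{5}+i\sqrt{2(1+3\sqrt{5})}\bigr)$ of $\mathcal{J}_3\mathcal{T}_3\mathcal{J}_3\mathcal{T}_3^{-1}$ has non-cyclotomic minimal polynomial $1-3x+3x^2-3x^3+x^4$, hence is not a root of unity. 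So, contrary to your framing, unitarity does not force Galois conjugation: a unimodular eigenvalue can be certified non-torsion via its minimal polynomial, a route you only gesture at in your final parenthesis.

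The genuine shortfall is that, as written, your text is a verification scheme rather than a proof: for no level do you actually produce a word, an automorphism $\sigma$, and a certified spectral fact --- everything decisive is deferred to a search you ``expect'' to succeed. The paper's own data show this expectation cannot be waved through: the identical trace test fails not only at $r=3$ but also at $r=5$ ($10.54<14$), which is consistent with $r=5$ being absent from the theorem, so success of short words is genuinely level-dependent, as you yourself concede. Two smaller points: Schur's theorem is unnecessary for your reduction (a group containing an element of infinite projective order is infinite outright), and your ``precisely when'' characterization of finite order in $\PGL$ requires diagonalizability --- harmless here, since you only use the easy direction, and the matrices in play are Galois conjugates of unitaries.
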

And it appears that the trace of certain elements grows exponentially in $r$.
It is known from the result of Funar \cite{Funar99} that if one considers the entire mapping class group, then the image is infinite in almost all cases. Additionally, Masbaum found an infinite order element \cite{Mas99}. However, their methods cannot directly address our question. The reason is that they used the factorization axiom to cut the surface into smaller pieces, and studied the mapping classes supported on subsurfaces, therefore the problem can be reduced to the calculations of braid group representations. In our case, however, Pseudo-Anosov mapping classes are generic within the corresponding subgroup of the mapping class group. meaning they cannot be supported on any subsurface. We noticed the following conjecture of Andersen, Masbaum and Ueno.

\begin{conj}\cite[Conjecture~2.4]{AMU06}
The image of a Pseudo-Anosov mapping class under TQFT representations is of infinite order for all sufficiently large levels.
\end{conj}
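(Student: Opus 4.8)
The plan is to attack the two implications separately, since they are of very different character. Throughout write $\rho_r$ for the projective TQFT representation at level $r$ coming from the $V_r(\Sigma)$, and let $\phi\in\Mod(\Sigma)$.

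For the ``only if'' direction I would argue by the contrapositive using the Nielsen--Thurston trichotomy. If $\phi$ is periodic then some power $\phi^k$ is isotopic to the identity (or to a central element), so $\rho_r(\phi)^k$ is a scalar for every $r$ and the image is of finite order. If $\phi$ is reducible, I would cut $\Sigma$ along the canonical reduction system and invoke the factorization (gluing) axiom of the BHMV TQFT to write $\rho_r(\phi)$ in block form controlled by the mapping classes induced on the pieces; on an annular piece a Dehn twist acts by a diagonal matrix of roots of unity, hence by a finite-order element, and I would induct on the complexity of the remaining pieces. The honest caveat here is that a reducible map carrying a genuinely pseudo-Anosov piece is \emph{not} covered by a naive induction, so this direction only settles the periodic and ``no pseudo-Anosov piece'' cases cleanly; the substantive new content is the forward implication.

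For the ``if'' direction --- $\phi$ pseudo-Anosov $\Rightarrow$ $\rho_r(\phi)$ of infinite order for all large $r$ --- I would first reduce ``infinite order'' to a spectral statement: since $\rho_r(\phi)$ is projectively unitary it is diagonalizable, so it has finite order if and only if all of its eigenvalues are roots of unity, equivalently if and only if some power $\rho_r(\phi)^N$ is scalar. I would then realize $V_r(\Sigma)$ by geometric quantization of the moduli space of flat $\mathrm{SU}(2)$-connections and use Andersen's description of the curve operators as Toeplitz operators $T_f^{(r)}$, under which conjugation by $\rho_r(\phi)$ approximates, in the semiclassical limit $r\to\infty$, the classical symplectic action of $\phi$ on the character variety. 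Assuming toward a contradiction that $\rho_r(\phi)^N$ is scalar, conjugation by $\rho_r(\phi)$ would have order dividing $N$ on the algebra of curve operators; combined with asymptotic faithfulness this should force the classical action of $\phi$ on the Goldman symplectic manifold to be periodic, contradicting the fact that a pseudo-Anosov map stretches geometric intersection numbers exponentially and hence acts on the trace functions with unbounded growth.

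The hard part --- and the reason this conjecture is still open in general --- will be \emph{uniformity in the level}. Asymptotic faithfulness only yields $\phi^n\notin\ker\rho_r$ once $r$ is large \emph{depending on} $n$, whereas infinite order at a single fixed level $r$ demands $\phi^n\notin\ker\rho_r$ for \emph{all} $n$ simultaneously. Bridging this gap is where the real difficulty lies; the most promising route I see is arithmetic: the eigenvalues of $\rho_r(\phi)$ are algebraic integers in a cyclotomic field, and the Galois action relating different levels (via the Gilmer--Masbaum integral TQFT) together with explicit trace-growth estimates --- of exactly the exponential type observed for the Hecke-group elements in Theorem~\ref{iftyimg} --- should obstruct all eigenvalues from being roots of unity at a fixed level, converting the dynamical stretch factor $\lambda>1$ of the pseudo-Anosov map into a genuine non-root-of-unity eigenvalue.
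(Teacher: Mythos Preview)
The paper does \emph{not} prove this statement: it is quoted verbatim as an open conjecture of Andersen, Masbaum, and Ueno, and the paper only uses it conditionally (``if their conjecture is true, it will imply the image of most of our representations are infinite''), pointing to \cite{DK19} and \cite{BDKY20} for partial progress. So there is no ``paper's own proof'' to compare against; you are attempting to settle an open problem.

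On the substance of your sketch, two remarks. First, your labeling of the two implications is reversed: ``$\phi$ is pseudo-Anosov \emph{only if} its image has infinite order'' is the implication pA $\Rightarrow$ infinite order, which is your second paragraph, not your first. More importantly, the direction you treat first (non-pA $\Rightarrow$ finite order at every level) is not merely hard in the reducible-with-pA-piece case --- it is likely \emph{false} as literally stated here, since a reducible mapping class carrying a pseudo-Anosov component on a subsurface will, by Funar--Masbaum type arguments via the factorization axiom, already have infinite-order image for large $r$. The AMU conjecture is usually formulated with ``has a pseudo-Anosov piece'' rather than ``is pseudo-Anosov'', and your own caveat about the inductive step is really detecting this.

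Second, for the hard direction you have correctly isolated the genuine obstruction: asymptotic faithfulness gives ``for each $n$ there is $r_0(n)$ with $\phi^n\notin\ker\rho_r$ for $r\ge r_0(n)$'', whereas infinite order at a \emph{fixed} $r$ needs this for all $n$ simultaneously. Your Toeplitz/semiclassical heuristic and the arithmetic route through Galois orbits are both reasonable lines of attack that appear in the literature, but neither is known to close this quantifier gap in general --- which is exactly why the statement remains a conjecture and why the present paper verifies only a handful of small levels (Theorem~\ref{iftyimg}) by direct computation of traces rather than by any structural argument.
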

 
If this conjecture holds, it would imply that most of our representations are infinite. Recent progress on this conjecture for higher-genus surfaces with at least two boundary components is discussed in \cite{DK19}, along with its connection to the volume conjecture in \cite{BDKY22}.

Here is a brief outline of this paper. In Section 2, we review the basic definitions and properties of the Hecke group and the mapping class group. In Section 3, we review Thurston's construction and use it to define an inclusion $\rho: \tilde{\Gamma}_{2g+1}\hookrightarrow\Mod(\Sigma_g)$ with an explicit geometric description of $\rho(s)$. In Section 4, we review the general framework of TQFT and the mapping class group actions. Here, we carefully analyze these actions to prove Theorem \ref{mainthm0} and perform explicit calculations to establish Theorem \ref{iftyimg}. In Section 5, we examine the reducibility of $h_r$. By using the spin structures, we conclude that the representation is reducible with at least three irreducible summands when $r=4l+2$ for $l\geq 1$.     

\section*{Acknowledgement}
The author would like to thank Vaughan Jones, this work cannot be done without his constant support, guidance, and
encouragement. The author thanks Dietmar Bisch for his constant support at Vanderbilt. The author thanks Spencer Dowdall for helpful discussions and providing the reference \cite{leininger2004}.  The author thanks Eric Rowell and Yilong Wang for helpful discussions. The author also thanks Zhengwei Liu and BIMSA (Beijing Institute of Mathematics Sciences and Applications) where this work was completed.
\begin{figure}[h]
\centering
\label{fig:surface}
\includegraphics[width=0.8\textwidth]{{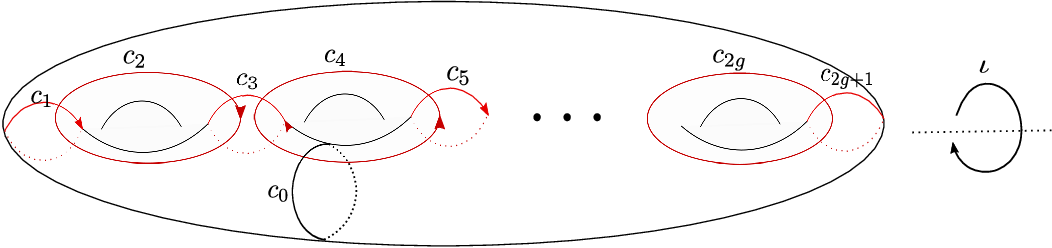}}
\caption{Genus $g$ surface with simple closed curves}
\end{figure}
\section{Preliminaries}
\subsection{Hecke group}
Here we mainly follow the discussion of Hecke groups in \cite[Appendix~III]{GHJ89}.    

\begin{defn}
The Hecke group $\Gamma_{q}\  (q\geq 3,\  odd)$ is the subgroup of $\PSL(2,\mathbb{R})$ generated by

$$ A_q=\begin{pmatrix}
1&\lambda\\
0&1
\end{pmatrix}\ , \ 
B_q=\begin{pmatrix}
1&0\\
-\lambda&1
\end{pmatrix}\ . \ \  (\lambda=2\cos\frac{\pi}{q})$$.
\end{defn}
\begin{thm}[] \cite{GHJ89}\label{Heckegroup}
 Let $J=\begin{pmatrix}
0&-1\\
1&0
\end{pmatrix}$, we have $J=A_q^{-1}(A_qB_q)^{\frac{q+1}{2}}$ and
$$\Gamma_q=<J, A_qJ>\cong \mathbb{Z}_2* \mathbb{Z}_{q}\ .$$
Moreover, if we view the matrices as elements in $\SL(2,\mathbb{R})$, then 
\begin{equation}\label{J:def}
J=(A_qB_q)^{\frac{q(q-1)}{2}}A_q^{-1}(A_qB_q)^{\frac{q+1}{2}}\ ,
\end{equation}
and they generate a group $\tilde{\Gamma}_q$ isomorphic to amalgamate free product $\mathbb{Z}_2*_{\mathbb{Z}_2} \mathbb{Z}_{q}$, which has the presentation 
$$\tilde{\Gamma}_q=<J, A_qJ>\cong <s,t|s^4=(ts)^{2q}=1\ ,\ s^2=(ts)^{q}>\ .$$
 
\end{thm}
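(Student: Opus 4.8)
The plan is to separate the computational identities, which live in $\SL(2,\mathbb{R})$ and require only matrix multiplication together with the elliptic normal form, from the group-theoretic structure, which is the classical description of the Hecke group. First I would record the fundamental relations in $\SL(2,\mathbb{R})$. A direct multiplication gives
\[
A_qJ=\begin{pmatrix}\lambda&-1\\1&0\end{pmatrix},\qquad (A_qJ)^2=\begin{pmatrix}\lambda^2-1&-\lambda\\\lambda&-1\end{pmatrix}=-A_qB_q ,
\]
while $J^2=-I$ is immediate. Since $\Tr(A_qJ)=\lambda=2\cos(\pi/q)$ with $|\lambda|<2$, the element $A_qJ$ is elliptic and is conjugate in $\SL(2,\mathbb{R})$ to the rotation by $\pi/q$; hence $(A_qJ)^q$ is conjugate to the rotation by $\pi$, i.e. $(A_qJ)^q=-I$, so $A_qJ$ has order $2q$ in $\SL(2,\mathbb{R})$ and order $q$ in $\PSL(2,\mathbb{R})$. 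Combined with $(A_qJ)^2=-A_qB_q$ and $q$ odd, this gives $(A_qB_q)^q=(-1)^q(A_qJ)^{2q}=-I$, so $A_qB_q$ has order $2q$ (resp. $q$) in $\SL$ (resp. $\PSL$).

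Next I would verify the two closed forms for $J$. Substituting $A_qB_q=-(A_qJ)^2$ and using $(A_qJ)^q=-I$,
\[
A_q^{-1}(A_qB_q)^{\frac{q+1}{2}}=(-1)^{\frac{q+1}{2}}A_q^{-1}(A_qJ)^{q+1}=(-1)^{\frac{q+3}{2}}J ,
\]
which equals $J$ in $\PSL(2,\mathbb{R})$, proving the first assertion. For the lift \eqref{J:def} I multiply on the left by $(A_qB_q)^{\frac{q(q-1)}{2}}=\big((A_qB_q)^q\big)^{\frac{q-1}{2}}=(-1)^{\frac{q-1}{2}}I$, so the total sign is $(-1)^{\frac{q-1}{2}+\frac{q+3}{2}}=(-1)^{q+1}=+1$ because $q$ is odd; hence the right-hand side of \eqref{J:def} is exactly $J$ in $\SL(2,\mathbb{R})$. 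The sign bookkeeping here, and the repeated use of $q$ odd, is the only delicate part of the computation. Generation then follows: from the first formula $J\in\langle A_q,B_q\rangle$, and conversely $A_q=(A_qJ)J^{-1}$ together with $B_q=-A_q^{-1}(A_qJ)^2=J^2A_q^{-1}(A_qJ)^2$ exhibit $A_q,B_q\in\langle J,A_qJ\rangle$; thus $\Gamma_q=\langle J,A_qJ\rangle$, and the same equalities hold in $\SL$.

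Finally I would establish the structure. In $\PSL(2,\mathbb{R})$, $J$ has order $2$ and $A_qJ$ order $q$, and $\Gamma_q=\langle J,A_qJ\rangle=\langle J,A_q\rangle$ is precisely the classical Hecke group with generators $S=J$, $T=A_q$. The nontrivial point—that there are no relations beyond $S^2=(TS)^q=1$, i.e. $\Gamma_q\cong\mathbb{Z}_2*\mathbb{Z}_q$—is the classical theorem of Hecke, obtained by a ping-pong argument on $\partial\mathbb{H}$ or from Poincaré's fundamental-polygon theorem applied to the domain $\{\,|z|\ge1,\ |\mathrm{Re}\,z|\le\lambda/2\,\}$; for a preliminaries section I would simply cite \cite{GHJ89}. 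Passing to $\SL$, the group $\tilde{\Gamma}_q$ is the preimage of $\Gamma_q$ under $\SL(2,\mathbb{R})\to\PSL(2,\mathbb{R})$, a central extension by $\langle -I\rangle\cong\mathbb{Z}_2$, where $-I=J^2=(A_qJ)^q$. Setting $s=J$ and $t=A_q$ (so $ts=A_qJ$), the relations $s^4=(ts)^{2q}=1$ and $s^2=(ts)^q$ hold by the orders computed above, and the abstract group they present is the amalgamated product $\langle s\rangle *_{\langle s^2\rangle}\langle ts\rangle\cong\mathbb{Z}_4*_{\mathbb{Z}_2}\mathbb{Z}_{2q}$, a central $\mathbb{Z}_2$-extension whose quotient by $\langle s^2\rangle$ is exactly $\mathbb{Z}_2*\mathbb{Z}_q$. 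Since the resulting surjection onto $\tilde{\Gamma}_q$ induces the identity on the central $\mathbb{Z}_2$ and on the quotient $\Gamma_q$, it is an isomorphism. I expect the only genuine obstacle to be the no-hidden-relations claim for $\Gamma_q$ (the free-product structure); everything else reduces to the three matrix identities and the sign tracking above.
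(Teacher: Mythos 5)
Your proposal is correct and takes essentially the same route as the paper: the paper likewise cites \cite{GHJ89} for the free-product structure of $\Gamma_q$ and obtains the $\SL(2,\mathbb{R})$ statement by checking the presentation's relations via the identities $J^2=(A_qB_q)^q=-I$ and $(A_qJ)^2=-A_qB_q$. Your write-up simply fills in details the paper leaves implicit, namely the elliptic-conjugacy argument pinning down $(A_qJ)^q=-I$, the sign bookkeeping verifying \eqref{J:def}, and the central-extension argument identifying $\langle s,t\mid s^4=(ts)^{2q}=1,\ s^2=(ts)^q\rangle$ with $\mathbb{Z}_4*_{\mathbb{Z}_2}\mathbb{Z}_{2q}$ and matching it to $\tilde{\Gamma}_q$.
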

\begin{proof}
One can find the proof for the structure of $\Gamma_q$ in \cite[Appendix~III]{GHJ89} or \cite[Chapter~II]{dlHarpe00}. The structure for $\tilde{\Gamma}_q$ follows easily from checking the relation and observing $J^2=(A_qB_q)^q=-I,\  (A_qJ)^2=-A_qB_q$.      
\end{proof}

\begin{rem}
When $q=3$,  $\Gamma_q\cong\PSL(2,\mathbb{R}),\ \tilde{\Gamma}_q\cong\SL(2,\mathbb{R}) $. When $\lambda\geq 2$, the group is freely generated by those two elements. 
\end{rem}

\subsection{Mapping class group}
This section mainly follows \cite{FM12,Bir74MCG}.
\begin{defn}
Let $\Sigma$ be a surface possible with punctures and boundaries, and $Homeo^+(\Sigma,\partial \Sigma)$ be the group of orientation-preserving homeomorphisms of $\Sigma$ that restrict to the identity on $\partial \Sigma$. The \textbf{mapping class group} of $\Sigma$, denoted $\Mod(\Sigma)$, is the group 
$$
\Mod(\Sigma)=Homeo^{+}(\Sigma,\partial \Sigma)/isotopy\ .
$$
\end{defn}
\begin{defn}\label{def:Dehntwist}
Fix a simple closed curve $\gamma$ on the surface, the \textbf{right\ (left) Dehn twist} about $\gamma$, denote by $T_{\gamma}$ $(resp.\ T^{-1}_{\gamma})$, is the isotopy class of a homeomorphism supported in an annular neighborhood $U$ of $\gamma$. More precisely, let $\psi: U\to \mathbb{R}/l_1\mathbb{Z}\times [0,l_2] $ be an orientation preserving homeomorphism, $T_\gamma$ is given by conjugating $\psi$ with the affine map
$$
(x,y)\mapsto(x\pm y\frac{l_1}{l_2},y)\ .
$$
Where $+$ gives a right (or positive) twist, while $-$ gives a left (or negative) twist.
\end{defn}

For now, we consider mainly closed surfaces with no punctures and right Dehn twists, the presentation of the mapping class group is known, for example see\cite{Wajnryb83}. The next proposition gives many interesting properties of Dehn twists, one can see, for example, \cite[Chapter~3]{FM12} for the proofs. 
\begin{prop}\label{DT_property}
Let $\gamma_1, \gamma_2$ be any isotopy classes of simple closed curves in $\Sigma$ with geometric intersection number $i(\gamma_1,\gamma_2)$. Let $f\in \Mod(\Sigma)$, we have \\
(a) $T_{f(\gamma_1)}=fT_{\gamma_1}f^{-1}$\ , \\
(b) $fT^k_{\gamma_1}=T^k_{\gamma_1}f\iff f(\gamma_1)=\gamma_1$\ ,\\
(c) $i(\gamma_1,\gamma_2)=0\iff T_{\gamma_1}T_{\gamma_2}=T_{\gamma_2}T_{\gamma_1}\ .$

\end{prop}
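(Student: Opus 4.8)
The three statements all rest on the single change-of-coordinates identity (a), which I would prove first and then bootstrap. To establish (a), fix an orientation-preserving homeomorphism $\phi$ representing $f$ and write the twist concretely using Definition \ref{def:Dehntwist}: let $\psi\colon U\to \mathbb{R}/l_1\mathbb{Z}\times[0,l_2]$ be the chart on an annular neighborhood $U$ of $\gamma_1$, so that $T_{\gamma_1}=\psi^{-1}\tau\psi$, where $\tau$ is the affine shear $(x,y)\mapsto(x+y\tfrac{l_1}{l_2},y)$. Then
\[
\phi\,T_{\gamma_1}\,\phi^{-1}=\phi\psi^{-1}\tau\psi\phi^{-1}=(\psi\phi^{-1})^{-1}\,\tau\,(\psi\phi^{-1}).
\]
Because $\phi$ is orientation preserving, $\psi\phi^{-1}$ is an orientation-preserving chart of the annular neighborhood $\phi(U)$ of $\phi(\gamma_1)$, so the right-hand side is by definition the \emph{right} Dehn twist about $f(\gamma_1)$; orientation preservation is precisely what keeps the sign of the twist correct. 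Hence $fT_{\gamma_1}f^{-1}=T_{f(\gamma_1)}$.

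For (b), the forward direction is immediate from (a): if $f(\gamma_1)=\gamma_1$ then $fT_{\gamma_1}f^{-1}=T_{f(\gamma_1)}=T_{\gamma_1}$, so $f$ commutes with $T_{\gamma_1}$ and therefore with $T_{\gamma_1}^k$. For the converse, I would conjugate $T_{\gamma_1}^k$ by $f$ and iterate (a) to obtain $T_{f(\gamma_1)}^k=fT_{\gamma_1}^kf^{-1}=T_{\gamma_1}^k$; the remaining task is to cancel the twists and recover the curve. Here I invoke the intersection-number formula
\[
i(T_\alpha^k(\delta),\delta)=|k|\,i(\alpha,\delta)^2
\]
valid for $k\neq 0$ and any essential simple closed curve $\delta$. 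Applying it with $\alpha=f(\gamma_1)$ and with $\alpha=\gamma_1$ and comparing gives $i(f(\gamma_1),\delta)=i(\gamma_1,\delta)$ for every $\delta$. Since an isotopy class of essential simple closed curve is uniquely determined by its geometric intersection numbers with all simple closed curves, this forces $f(\gamma_1)=\gamma_1$, completing (b). (In particular the special case $k=1$ shows the assignment $\gamma\mapsto T_\gamma$ is injective on isotopy classes.)

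For (c), the forward direction is geometric: $i(\gamma_1,\gamma_2)=0$ lets me isotope the curves to be disjoint and choose disjoint annular neighborhoods, whence $T_{\gamma_1}$ and $T_{\gamma_2}$ have disjoint support and commute. For the converse, commutativity gives $T_{\gamma_1}T_{\gamma_2}T_{\gamma_1}^{-1}=T_{\gamma_2}$, and (a) rewrites the left-hand side as $T_{T_{\gamma_1}(\gamma_2)}$; the injectivity just noted then yields $T_{\gamma_1}(\gamma_2)=\gamma_2$. Feeding this into the intersection formula with $k=1$ and $\delta=\gamma_2$ gives $i(\gamma_1,\gamma_2)^2=i(T_{\gamma_1}(\gamma_2),\gamma_2)=i(\gamma_2,\gamma_2)=0$, so $i(\gamma_1,\gamma_2)=0$.

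The main obstacle is not the formal reductions above but the quantitative input they consume: the intersection-number formula and the fact that a curve is pinned down by its intersection pattern. Proving $i(T_\alpha^k(\delta),\delta)=|k|\,i(\alpha,\delta)^2$ requires placing $\alpha$ and $\delta$ in minimal position, tracking how the $k$ wraps introduced at each of the $i(\alpha,\delta)$ intersection points cross $\delta$, and then verifying that the resulting configuration is again minimal, i.e.\ contains no bigons; this no-bigon verification is the genuinely technical step, and one must also handle the low-complexity degenerate cases. The determination of an isotopy class by its intersection numbers is a separate classical fact. Both are standard, and since the paper cites \cite{FM12} for these proofs, I would quote them as black boxes and keep the write-up focused on the clean deductions of (a)--(c) above.
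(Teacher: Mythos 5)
Your proof is correct and coincides with the standard argument of \cite[Chapter~3]{FM12}, which is precisely what the paper cites in lieu of its own proof: (a) by the change-of-coordinates computation, and (b), (c) by bootstrapping (a) against the formula $i(T_\alpha^k(\delta),\delta)=|k|\,i(\alpha,\delta)^2$ together with the injectivity of $\gamma\mapsto T_\gamma$ (equivalently, that an essential isotopy class is determined by its intersection pattern). Your caveat that the curves must be essential (otherwise the twists are trivial and the converses in (b) and (c) fail) is the right one to flag, and treating the two quantitative inputs as classical black boxes matches the paper's own treatment.
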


\begin{thm}[\cite{FM12,Wajnryb83}]
Let $\Sigma_g$ denote genus $g$ closed surface with no punctures, then $\Mod(\Sigma_g)$ is generated by the (left) Dehn twists around curves $c_i (0\leq i\leq 2g)$ shown in Figure \ref{fig:surface}. There are relations among generators: the disjoint relation (far commutativity), the braid relation, the chain relation, the lantern relation and the hyperelliptic relation.
\end{thm}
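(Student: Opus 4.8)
The plan is to prove the two assertions separately: first that the $2g+1$ Dehn twists $T_{c_i}$ generate $\Mod(\Sigma_g)$, and then that the listed relations genuinely hold (and, if one wants Wajnryb's full presentation, that they are complete). These three pieces have very different characters, and it is worth keeping them apart.

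For generation I would proceed in two stages. Stage one establishes that $\Mod(\Sigma_g)$ is generated by Dehn twists at all; the cleanest route is the ``change of coordinates principle'', namely that $\Mod(\Sigma_g)$ acts transitively on the set of isotopy classes of nonseparating simple closed curves, together with the observation that whenever $i(\gamma_1,\gamma_2)=1$ the braid relation $T_{\gamma_1}T_{\gamma_2}T_{\gamma_1}=T_{\gamma_2}T_{\gamma_1}T_{\gamma_2}$ produces a mapping class carrying $\gamma_1$ to $\gamma_2$. Feeding this into an induction on the genus --- cutting $\Sigma_g$ along a nonseparating curve and using the Birman exact sequence to control the kernel of the resulting capping and inclusion maps --- shows that every mapping class is a product of Dehn twists (Dehn--Lickorish). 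Stage two is the reduction to the specific chain $c_0,\dots,c_{2g}$: one shows that a Dehn twist about any simple closed curve can be rewritten in terms of twists about the $c_i$, the essential input being the lantern relation, which trades a twist about a ``fourth'' boundary curve for twists about three simpler curves and thereby eliminates the superfluous Lickorish generators (Humphries).

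Verifying that the listed relations actually hold is comparatively routine and local. The disjoint (far commutativity) relation is exactly part (c) of Proposition \ref{DT_property}; the braid relation is the standard $i(\gamma_1,\gamma_2)=1$ identity; and the chain, lantern, and hyperelliptic relations are each checked inside an explicit subsurface --- a regular neighborhood of a chain of curves, a four-holed sphere, and the surface realized as a double cover of the sphere branched over $2g+2$ points, respectively --- by exhibiting the supporting homeomorphisms directly and using part (a) of Proposition \ref{DT_property}, $T_{f(\gamma)}=fT_\gamma f^{-1}$, to conjugate twists into one another.

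The genuinely hard part is the completeness claim, that these relations suffice to present $\Mod(\Sigma_g)$; this cannot be verified locally. The standard method (Wajnryb) is to let $\Mod(\Sigma_g)$ act on a suitable connected and simply connected two-dimensional complex --- for instance the Hatcher--Thurston cut-system complex or an ordered curve-system variant --- and then read off a presentation from the vertex stabilizers together with the edge and face relations, via the usual machinery for groups acting on simply connected complexes, finally matching the output with the short list above after a long but finite combinatorial simplification. I expect the principal obstacle to be establishing simple connectivity of the relevant complex and carrying out the bookkeeping that reconciles its presentation with the listed relations; by contrast, the generation statement is robust and follows the outline above without surprises, and the relations themselves are checked by inspection of the model subsurfaces.
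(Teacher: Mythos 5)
The paper offers no proof of this statement: it is quoted as background, with the argument delegated entirely to the cited references \cite{FM12,Wajnryb83}. Your outline is correct and is essentially the argument found in those sources --- Dehn--Lickorish generation via the change-of-coordinates principle and induction through the Birman exact sequence, Humphries' reduction to the $2g+1$ twists $T_{c_0},\dots,T_{c_{2g}}$ via the lantern relation, local verification of the relations in model subsurfaces, and completeness via Wajnryb's analysis of the action on the simply connected Hatcher--Thurston cut-system complex --- so it matches the paper's (implicit) approach.
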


\begin{defn}[\cite{FM12}]
Let $\iota_g$ be the hyperelliptic involution in Figure 1 and let $SHomeo^+(\Sigma_g)$ be the centralizer in $Homeo^+(\Sigma_g)$ of $\iota_g$, i.e.,
$$
SHomeo^+(\Sigma_g)=C_{SHomeo^+(\Sigma_g)}(\iota_g)\ .
$$
The \textbf{symmetric mapping class group}, denoted by $\SMod(\Sigma_g)$, is the group
$$
\SMod(\Sigma_g)=SHomeo^+(\Sigma_g)/isotopy\ .
$$
\end{defn}
\begin{rem}
In general, a hyperelliptic involution is an order two mapping class acting on the homology by $-I$, and it is unique up to conjugations when genus $\geq 3$ \cite[Proposition~7.15]{FM12}. Here we pick the special $\iota_g$ as indicated in Figure 1. 
\end{rem}

\begin{thm}[Birman-Hilden Theorem]
For any $g$, $\SMod(\Sigma_g)/<\iota_g>\cong \Mod(S_{0,2g+2})$, where $\Mod(S_{0,2g+2})$ is the mapping class group of  $2g+2$ punctured sphere, or the spherical braid group $\pi_1B_{0,2g+2}S^2$. In particular, $\SMod(\Sigma_g)$ is generated by $T_{c_i}(1\leq i\leq 2g+1)$, they satisfy the relations in the braid group $\mathcal{B}_{2g+2}$. 
\end{thm}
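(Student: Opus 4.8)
The plan is to realize the quotient by the hyperelliptic involution as a branched double cover and to transport mapping classes across it. First I would form the quotient $S^2 = \Sigma_g/\langle \iota_g\rangle$: since $\iota_g$ has exactly $2g+2$ fixed points (Riemann--Hurwitz for the degree-two map reads $2-2g = 2\cdot 2 - b$, forcing $b = 2g+2$), the projection $p:\Sigma_g \to S^2$ is a double cover branched over $2g+2$ points, which I identify with the punctures of $S_{0,2g+2}$. Every $f \in SHomeo^+(\Sigma_g)$ commutes with $\iota_g$, hence permutes the fibers of $p$ together with the branch set, and so descends to a well-defined orientation-preserving homeomorphism $\bar f$ of $S^2$ preserving that set. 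Sending $f \mapsto \bar f$ gives a homomorphism $\Phi: SHomeo^+(\Sigma_g) \to Homeo^+(S^2,\{2g+2\ \mathrm{pts}\})$, and the goal is to show the induced map descends to the claimed isomorphism after passing to isotopy.

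For the kernel, a symmetric homeomorphism covering the identity on $S^2$ is a deck transformation of $p$, and the deck group is $\{1,\iota_g\}$, so on the homeomorphism level $\ker\Phi = \langle\iota_g\rangle$. For surjectivity I need a lifting criterion: the unbranched cover $\Sigma_g \setminus \mathrm{Fix}(\iota_g) \to S^2 \setminus \{2g+2\ \mathrm{pts}\}$ corresponds to the kernel of the homomorphism $\pi_1(S^2\setminus\{2g+2\ \mathrm{pts}\}) \to \mathbb{Z}/2$ sending each peripheral loop to the generator (well-defined precisely because $2g+2$ is even). Any homeomorphism preserving the branch set permutes these peripheral classes, hence preserves this $\mathbb{Z}/2$ quotient and therefore the index-two subgroup; the standard lifting criterion then produces a lift to $\Sigma_g$, which can be chosen orientation-preserving and symmetric. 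Thus $\Phi$ is surjective onto orientation-preserving branch-set-preserving homeomorphisms.

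The delicate point, and the main obstacle, is that $\Phi$ must be well defined and injective \emph{after passing to isotopy classes}: this is the \emph{Birman--Hilden property}, asserting that two symmetric homeomorphisms isotopic in $Homeo^+(\Sigma_g)$ are already isotopic through symmetric homeomorphisms, equivalently that an isotopy downstairs lifts to an isotopy upstairs. This is exactly where covering-space theory alone does not suffice and a genuine isotopy analysis is required; I would invoke the theorem that the hyperelliptic branched cover over a sphere of negative orbifold Euler characteristic has the Birman--Hilden property, established by controlling isotopies via essential curves and the Alexander method (see \cite[Chapter~9]{FM12}). Granting this, $\Phi$ induces a well-defined injection $\SMod(\Sigma_g)/\langle\iota_g\rangle \hookrightarrow \Mod(S_{0,2g+2})$, which together with the surjectivity above gives the asserted isomorphism; the identification of $\Mod(S_{0,2g+2})$ with the spherical braid group $\pi_1 B_{0,2g+2}S^2$ is the classical Birman description of the sphere mapping class group.

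Finally, for the \emph{in particular} statement I would trace the standard generators through $p$. Each curve $c_i$ can be taken to be preserved by $\iota_g$, so Proposition \ref{DT_property}(b) shows $T_{c_i}$ commutes with $\iota_g$ and hence lies in $\SMod(\Sigma_g)$; its image $c_i$ projects to an embedded arc in $S^2$ joining two consecutive branch points, and $\Phi(T_{c_i})$ is the half-twist $\sigma_i$ interchanging those two points (whose square is the Dehn twist about the boundary of a regular neighborhood of the arc). Since the half-twists $\sigma_1,\dots,\sigma_{2g+1}$ generate $\Mod(S_{0,2g+2})$ and satisfy the braid relations of $\mathcal{B}_{2g+2}$, the surjectivity of $\Phi$ forces $T_{c_1},\dots,T_{c_{2g+1}}$ to generate $\SMod(\Sigma_g)$ and their images to obey those braid relations. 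I expect essentially all of the genuine difficulty to be concentrated in the Birman--Hilden property above; the remaining steps are formal consequences of covering-space theory and the known presentation of the sphere braid group.
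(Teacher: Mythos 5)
The paper does not actually prove this statement: it quotes the Birman--Hilden theorem from \cite{FM12,Bir74MCG} and moves on, so the relevant comparison is with the standard textbook argument --- which is exactly what you reconstruct. Your skeleton is the right one and is essentially sound: Riemann--Hurwitz giving $2g+2$ branch points, the kernel at the homeomorphism level being the deck group $\langle\iota_g\rangle$ (and your lifts are in fact automatically symmetric, since $\tilde f\iota_g$ and $\iota_g\tilde f$ are both lifts of $\bar f$ and cannot coincide with $\tilde f$), surjectivity via the characteristic index-two subgroup cut out by the mod-$2$ peripheral homomorphism (your remark that this needs $2g+2$ even is the correct subtlety), and the correct isolation of the Birman--Hilden property as the one genuinely hard input, delegated to \cite[Chapter~9]{FM12}.

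Three points need repair, all in your last paragraph and in the range of validity. First, surjectivity of $\SMod(\Sigma_g)/\langle\iota_g\rangle\to\Mod(S_{0,2g+2})$ only shows the $T_{c_i}$ generate $\SMod(\Sigma_g)$ \emph{modulo} $\langle\iota_g\rangle$; to conclude they generate $\SMod(\Sigma_g)$ itself you must exhibit $\iota_g$ as a word in the $T_{c_i}$, which is the hyperelliptic/chain relation $\iota_g=T_{c_{2g+1}}\cdots T_{c_1}T_{c_1}\cdots T_{c_{2g+1}}$ (this is precisely what the paper records and exploits in Lemma~\ref{lem:rel0}). Second, relations do not lift across a central $\mathbb{Z}/2$ quotient: from the braid relations among the half-twists $\bar\sigma_i$ downstairs you only obtain $T_{c_i}T_{c_{i+1}}T_{c_i}=T_{c_{i+1}}T_{c_i}T_{c_{i+1}}\cdot\iota_g^{\epsilon}$ upstairs, so your argument does not yet establish the asserted relations for the $T_{c_i}$ themselves; the clean fix is to prove them directly on $\Sigma_g$, using Proposition~\ref{DT_property}(c) for disjoint curves and the standard braid relation for Dehn twists along curves with geometric intersection number one (\cite[Prop.~3.11]{FM12}), with no covering-space input at all. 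Third, your cited criterion (negative orbifold Euler characteristic) correctly excludes $g=1$, and this exclusion is essential: for the closed torus the $(2,2,2,2)$ orbifold has Euler characteristic zero and the map is not even well defined on isotopy classes --- translations by half-periods are symmetric and isotopic to the identity upstairs, yet descend to the nontrivial $(\mathbb{Z}/2)^2$ mapping classes in $\Mod(S_{0,4})$ --- so the theorem as stated ``for any $g$'' requires $g\geq 2$ (or marked points in genus one). Your proof therefore covers exactly the cases $g\geq 2$, which is all the paper genuinely uses, but you should flag the discrepancy rather than claim the statement verbatim.
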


\begin{figure}[b]
\centering
\includegraphics[width=0.5\textwidth]{{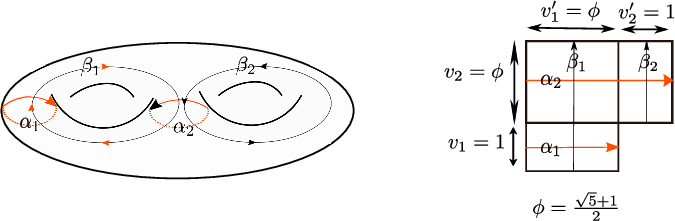}}
\caption{A flat structure of genus 2 surfaces where the given multi-twists act affinely.}
\label{fig:Fstructure}
\end{figure}  
  
\section{Hecke group inside mapping class group} 
\subsection{Flat structure and Thurston's construction}\cite{Th88,leininger2004,Mc06,FM12} 

 \begin{defn}
 A \textbf{multicurve} $A=\{\alpha_1,\cdots, \alpha_n\}$ is a set of pairwise disjoint simple closed curves on the surface, the \textbf{multi-twist} with \textbf{multiplicity} $ p:=\{p_1,\cdots,p_n\}$ ($p_i\in \mathbb{Z}_+$) about $A$ is a mapping class $T_{A,p}:=\prod\limits_{i=1}^nT_{\alpha_i}^{p_i}$. For simplicity, we write $T_A:=T_{A,\{1,\cdots,1\}}$. 
 \end{defn} 
  
  \begin{defn}
 A pair of multicurves $A=\{\alpha_1,\cdots \alpha_n\}$, $B=\{\beta_1\cdots \beta_m\}$ \textbf{bind} the surface $\Sigma_g$ if they meet only at transverse double points, and every component of $\Sigma_g-(A\cup B)$ is a polygonal
region with at least 4 sides (running alternately along $A$ and $B$).
\end{defn}
Now view $A\cup B$ as a (bipartite) graph, where the vertexes are given by the simple closed curves in $A,B$, and two vertexes are connected by $n$ edge iff the corresponding curves have geometric intersection number $n$. We denote the graph by $\mathcal{G}(A,B)$, and  the corresponding adjacency matrix by $N$, where $N_{i,j}=i(\alpha_i,\beta_j)$. Let $p=\{p_1, \cdots p_n\},\ q=\{q_1,\cdots q_m\} $ be a pair of multiplicities, we denote the corresponding diagonal matrix by $P, Q$.  If $A,B$ binds $\Sigma_g$, then $\mathcal{G}(A,B)$ is connected, hence the matrix $M=\begin{pmatrix}
0&PN\\
QN^t&0
\end{pmatrix}$ is primitive, and one can apply the Perron–Frobenius theorem. We denote the Perron–Frobenius eigenvalue and eigenvector by $\mu^{p,q}_{A,B}, \begin{pmatrix}
v^{p,q}_{A,B}\\
v'^{p,q}_{A,B}
\end{pmatrix}$, such that
\begin{equation}\label{eq:PerF}
\begin{aligned}
PNv'^{p,q}_{A,B}&=\mu^{p,q}_{A,B} v^{p,q}_{A,B}\ ,\\
QN^tv^{p,q}_{A,B}&=\mu^{p,q}_{A,B} v'^{p,q}_{A,B}\ .    
\end{aligned}
\end{equation} \\ 

Given a surface $\Sigma_g$, Thurston constructs a certain type of \textbf{flat structure} (singular Euclidean structure) on the surface \cite{Th88}, also see \cite{WrightAlex15} for equivalent definitions of the flat structure. Here we will give one arising from polygons.
\begin{defn}[\cite{WrightAlex15}]\label{Def:flatsurface}
 A \textbf{flat structure} on the surface $\Sigma_g$ is given by a cell decomposition consisting of a finite union of polygons in $\mathbb{C}$ (Euclidean polygons), with a choice of pairing of parallel sides of equal length. Two sets of polygons are considered to define the same flat structure if one can be cut into pieces along straight lines and these pieces can be translated and re-glued to form the other set of polygons. 
\end{defn}
For example Figure \ref{fig:Fstructure} gives a flat structure on $\Sigma_2$, where we have one vertex and it is the only 0-cell.\\
Now given a pair of multi curves $A,B$ binding $\Sigma_g$, we construct a flat structure on which the corresponding multi-twists $T_A, T_B$ act affinely.

We assign $\{|\alpha_1|,\cdots, |\alpha_n|\}=v^{p,q}_{A,B},\ \{|\beta_1|,\cdots,|\beta_m|\}=v'^{p,q}_{A,B}$ to be the length of curves in $A,B$. Consider the dual cell decomposition of the obvious cell decomposition coming from cutting along the curves in $A\cup B$ (since $A\cup B$ binds $\Sigma_g$), the number of $2$-cells is equal to the number of nonzero entries in $N$, and each $2$-cells is a rectangle. The length we assign to each $1$-cell is equal to the length of the curve intersecting with it, see Figure \ref{fig:Fstructure}. Using this flat structure we get an action of a pair of multi-twists with multiplicities.

\begin{thm}[Thurston's construction \cite{Mc06,Th88}]\label{Thconstruct}
Let $A=\{\alpha_1,\cdots \alpha_n\}$, $B=\{\beta_1\cdots \beta_m\}$ be a pair of multi curves binding the surface $\Sigma_g$,  and $p=\{p_1, \cdots p_n\},\ q=\{q_1,\cdots q_m\} $ be a pair of multiplicities. Then we have a representation $\rho_{A,B}^{p,q}: \langle T_{A,p},T_{B,q}\rangle\to \PSL(2,\mathbb{R})$ given by
$$
T_{A,p}\mapsto 
\begin{pmatrix}
1&\mu^{p,q}_{A,B}\\
0&1
\end{pmatrix}\ ,\ \ \ \ \ 
T_{B,p}\mapsto
\begin{pmatrix}
1&0\\
-\mu^{p,q}_{A,B}&1
\end{pmatrix}\ .
$$
Moreover, the map $\rho_{A,B}^{p,q}$ lifts to $\SL(2,\mathbb{R})$ if and only if the curves in $A\cup B$ can be oriented so that their geometric and algebraic intersection numbers coincide $(\alpha_i\cdot\beta_j=i(\alpha_i,\beta_j))$.
\end{thm}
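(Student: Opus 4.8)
The plan is to realize the two multitwists as affine automorphisms of the flat structure constructed above and to define $\rho_{A,B}^{p,q}$ as the induced derivative (linear-part) homomorphism. Write $X$ for $\Sigma_g$ equipped with this flat structure, and abbreviate the Perron--Frobenius data $\mu:=\mu^{p,q}_{A,B}$ with eigenvector halves $v,v'$ as in \eqref{eq:PerF}. On any flat structure the chart transitions are (half-)translations, so the derivative of an affine automorphism is a globally defined element of $\SL(2,\mathbb{R})$ up to sign; this gives a homomorphism $D\colon \mathrm{Aff}^+(X)\to \PSL(2,\mathbb{R})$. Since a nontrivial affine automorphism of a flat surface of genus $\geq 1$ is never isotopic to the identity, the forgetful map $\mathrm{Aff}^+(X)\to\Mod(\Sigma_g)$ is injective; thus it suffices to exhibit affine representatives $\phi_A,\phi_B\in\mathrm{Aff}^+(X)$ of $T_{A,p}$ and $T_{B,q}$ and to set $\rho_{A,B}^{p,q}:=D$ on the subgroup they generate, the values on products being forced by the homomorphism property of $D$.

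Next I would record the local model. The curves of $A$ are horizontal leaves and the curves of $B$ vertical leaves of the rectangle decomposition; by construction every rectangle met by $\alpha_i$ has the same vertical height $v_i$ and every rectangle met by $\beta_j$ the same horizontal width $v'_j$, so each $\alpha_i$ is the core of an embedded flat cylinder $C_i$ and each $\beta_j$ of a cylinder $C'_j$. On a Euclidean cylinder of circumference $c$ and height $h$ the right Dehn twist about the core is realized by the affine shear with derivative $\begin{pmatrix}1 & c/h\\ 0 & 1\end{pmatrix}$, so $T_{\alpha_i}^{p_i}$ is realized on $C_i$ by a shear of derivative $\begin{pmatrix}1 & p_i c_i/h_i\\ 0 & 1\end{pmatrix}$, where $c_i,h_i$ are the circumference and height of $C_i$.

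The crux is to check that these individual shears assemble into a single affine map of all of $X$. For $T_{A,p}=\prod_i T_{\alpha_i}^{p_i}$ this requires the quantities $p_i c_i/h_i$ to be independent of $i$ and equal to $\mu$, and this is exactly where Perron--Frobenius enters. Reading off the rectangle dimensions, the cylinder $C_i$ has height $h_i=v_i$ and circumference $c_i=\sum_j N_{ij}v'_j=(Nv')_i$, so $p_i c_i=(PNv')_i$; the eigenvalue equation $PNv'=\mu v$ then gives $p_ic_i=\mu v_i=\mu h_i$, i.e.\ $p_ic_i/h_i=\mu$ for every $i$. The symmetric equation $QN^tv=\mu v'$ does the same for the cylinders $C'_j$, whose cores shear in the transverse direction (accounting for the lower-triangular matrix and its sign). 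Hence $T_{A,p}$ and $T_{B,q}$ admit global affine representatives with the stated parabolic derivatives, and $D$ sends them to the asserted matrices. I expect this bookkeeping --- matching the two cylinder-modulus conditions to the two Perron--Frobenius equations under the correct normalization of the length assignment --- to be the main obstacle, since it is the heart of the construction.

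Finally I would settle the lifting criterion. A priori $X$ is only a half-translation surface, its horizontal and vertical foliations carrying transverse measures but not necessarily orientations, so $D$ lands in $\PSL(2,\mathbb{R})$; it lifts to $\SL(2,\mathbb{R})$ exactly when the linear holonomy omits $-I$, that is, when $X$ is a genuine translation surface, equivalently when its horizontal foliation is orientable (the vertical one then being orientable as well). A coherent orientation of the horizontal leaves is precisely a coherent orientation of the curves $\alpha_i$, and of the vertical leaves a coherent orientation of the $\beta_j$. Choosing such orientations, the holonomy closes up around every cone point if and only if the resulting local intersection signs all agree, which is the condition $\alpha_i\cdot\beta_j=i(\alpha_i,\beta_j)$ for all $i,j$. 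This yields the stated equivalence.
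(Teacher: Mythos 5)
Your proposal is correct and takes essentially the same route as the paper's proof: you decompose the flat structure into the horizontal and vertical cylinders cored by the curves of $A$ and $B$, use the Perron--Frobenius equations \eqref{eq:PerF} to check that the shear moduli $p_ic_i/h_i$ all equal $\mu^{p,q}_{A,B}$ so that the multitwists admit global affine representatives with the stated derivatives, and identify the lift to $\SL(2,\mathbb{R})$ with orientability of the foliations, i.e.\ coherent orientations of the curves making $\alpha_i\cdot\beta_j=i(\alpha_i,\beta_j)$ --- you simply supply the bookkeeping the paper defers to \cite{FM12,Mc06}. One peripheral claim needs a small repair: injectivity of $\mathrm{Aff}^+(X)\to\Mod(\Sigma_g)$ fails for $g=1$ (translations of the torus are nontrivial affine maps isotopic to the identity), but since translations have trivial derivative the map $D$ still descends, so your construction is unaffected.
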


\begin{proof}
The curves in $A$ decompose the surface into $n$ copies of cylinders with height vector equal to $v^{p,q}_{A,B}$, and circumference vector equal to $Nv'^{p,q}_{A,B}$. It is obvious one can make $T_A$ act affinely when restricted to each cylinder as indicated in Definition \ref{def:Dehntwist}. Now from (\ref{eq:PerF}) the linear part of the action agrees on all cylinders. The same is true for $T_B$.
Therefore, the derivatives of $T_A$ and $T_B$ give the desired representation. When $\alpha_i\cdot\beta_j=i(\alpha_i,\beta_j)$ for $1\leq i\leq m,\ 1\leq j\leq n$, we have well-defined horizontal and vertical directions, hence a linear representation. One can see \cite[Chapter~14]{FM12} or \cite[Section~4]{Mc06} for more details. 
\end{proof}

When $\mu_{A,B}^{p,q}<2$, the corresponding graph $\mathcal{G}(A,B)$ and multiplicities are restrictive, one can see \cite[Chapter~1.4]{GHJ89} for the discussion on the graphs with norm less than $2$. When there is no multiplicity, The graphs are of type $A,D$ or $E$ \cite{leininger2004}. A few calculations also show if there are multiplicities, then only type $A$ graph appears, with multiplicity two on a  1-valence vertex and the corresponding $\mu^{p,q}_{A,B}$ equals to the norm of the type $B$ Coxeter graph. \\

\begin{rem}
 Due to the Nielson-Thurston classification, a mapping class is either periodic, reducible or Pseudo-Anosov. Thurston used this construction to construct the Pseudo-Anosov mapping class \cite{Th88}. In fact, a mapping class in this construction is periodic, reducible or Pseudo-Anosov if and only if the image under $\rho^{p,q}_{A,B}$ is elliptic, parabolic or hyperbolic (determined by the traces).   
\end{rem}

The Riemann surfaces equipped with flat structures are called flat surfaces, which can also be described analytically by a pair $(X,\omega)$, where $X$ is a closed Riemann surface, and $\omega$ is a holomorphic $1$-form on $X$. One can choose an atlas of $X$ such that $\omega$ has the form $dz$ away from zeros for some charts $z$ with transition maps given by translations, and for the neighborhood of a zero $\omega$ has the form  $w^kdw$ for some charts $w$. There is natural action of $\GL_2^+(\mathbb{R})$, given by $g(X,\omega)=(Y,g\circ\omega)$, where $Y$ is the Riemann surface such that $g\circ\omega$ is holomorphic on $Y$. If we consider the flat surfaces in the sense of Definition \ref{Def:flatsurface}, then the action is more explicit which is given by actions on polygons in $\mathbb{R}^2$. The study of the flat surfaces and their behaviors under the $\GL_2^+(\mathbb{R})$ actions has wide applications in the study of geometry, topology and dynamic systems \cite{Zorich06}, In particular, by the results of Veech, (\cite{Veech89, MTflat02}), the flat surface obtained by multi curves above has a nontrivial stabilizer group which is a discrete subgroup of $\PSL(2,\mathbb{R})$ (or $\SL(2,\mathbb{R})$) containing the group generated by affine automorphisms given by the multi-twists.
\subsection{Relation between two groups}
\begin{prop}{\cite{leininger2004}}\label{prop:orderbd}
The representation $\rho_{A,B}^{p,q}$ is faithful when $\mu_{A,B}^{p,q}\geq 2$. When $\mu_{A,B}^{p,q}<2$, the order of $\ker(\rho^{p,q}_{A,B})$ is at most $2$.
\end{prop}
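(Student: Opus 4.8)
The plan is to treat the two regimes separately, since the image of $\rho_{A,B}^{p,q}$ changes character exactly at $\mu_{A,B}^{p,q}=2$. Write $\mu=\mu_{A,B}^{p,q}$ and set $U=\bigl(\begin{smallmatrix}1&\mu\\0&1\end{smallmatrix}\bigr)$, $L=\bigl(\begin{smallmatrix}1&0\\-\mu&1\end{smallmatrix}\bigr)$, so that $\rho_{A,B}^{p,q}(T_{A,p})=[U]$ and $\rho_{A,B}^{p,q}(T_{B,q})=[L]$ in $\PSL(2,\mathbb{R})$. The observation driving the faithful case is elementary: if $G=\langle a,b\rangle$ and $\phi\colon G\to H$ sends $a,b$ to a free basis of a rank-two free subgroup of $H$, then $\phi$ is injective, because any $g\in\ker\phi$ is represented by some word $w(a,b)$, and $w(\phi(a),\phi(b))=1$ forces $w$ to reduce to the empty word, whence $g=1$ already in $G$. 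Thus, in the faithful regime it suffices to prove the image is free of rank two on $[U],[L]$.

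For $\mu\ge 2$ I would run the classical ping-pong (Sanov) argument on $\mathbb{R}^2$. Take the disjoint open sets $X_+=\{(x,y):|x|>|y|\}$ and $X_-=\{(x,y):|x|<|y|\}$. For $n\ne 0$ and $(x,y)\in X_-$ one has $|x+n\mu y|\ge |n|\mu|y|-|x|\ge 2|y|-|x|>|y|$, so $U^nX_-\subseteq X_+$; symmetrically $L^nX_+\subseteq X_-$, the computation being uniform down to the boundary value $\mu=2$ because the sets are open. The ping-pong lemma then gives $\langle U,L\rangle\cong\langle U\rangle*\langle L\rangle$, and since $U,L$ have infinite order this is free of rank two. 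A free group has no $2$-torsion, so it maps isomorphically into $\PSL(2,\mathbb{R})$; hence $[U],[L]$ freely generate the image and, by the observation above, $\rho_{A,B}^{p,q}$ is faithful.

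For $\mu<2$ the element $UL$ has trace $2-\mu^2\in(-2,2)$ and is therefore elliptic; by the discussion preceding the proposition the only admissible graphs are of Dynkin type, which forces $\mu=2\cos(\pi/q)$ and exhibits the image as a Hecke (triangle) group, a discrete subgroup of $\PSL(2,\mathbb{R})$ by the Veech-type discreteness recalled above. Since these graphs are bipartite trees, the curves of $A\cup B$ admit a coherent orientation with $\alpha_i\cdot\beta_j=i(\alpha_i,\beta_j)$, so by Theorem \ref{Thconstruct} the representation lifts to $\tilde\rho\colon\langle T_{A,p},T_{B,q}\rangle\to \SL(2,\mathbb{R})$ with $\rho_{A,B}^{p,q}=\pi\circ\tilde\rho$, where $\pi\colon\SL(2,\mathbb{R})\to\PSL(2,\mathbb{R})$ has kernel $\{\pm I\}$. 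Consequently $\ker\rho_{A,B}^{p,q}=\tilde\rho^{-1}(\{\pm I\})$, and if $\tilde\rho$ is injective this set has at most two elements, giving $|\ker\rho_{A,B}^{p,q}|\le 2$ at once. So the whole statement reduces to the faithfulness of the lift $\tilde\rho$.

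The main obstacle is precisely this last point: proving $\tilde\rho$ is faithful, equivalently that $\langle T_{A,p},T_{B,q}\rangle$ carries no relations beyond those forced by its Fuchsian image. I would establish this by matching presentations. The von Dyck presentation of the image is generated by the two parabolics with the single essential relation $(UL)^{n}=I$ recording the order of the elliptic element $UL$; on the mapping-class side the corresponding word $(T_{A,p}T_{B,q})^{n}$ is governed by the chain relation in $\Mod(\Sigma_g)$, which identifies it with a central element of order at most two (the hyperelliptic-type involution, playing the role of $-I$). The model case is the torus ($A_2$, $\mu=1$), where $\langle T_{A,p},T_{B,q}\rangle=\SL(2,\mathbb{Z})=\Mod(\Sigma_1)$, the relation $(UL)^3=I$ pulls back to $(T_{A,p}T_{B,q})^3=-I$, and $\ker\rho_{A,B}^{p,q}=\{\pm I\}$ has order exactly two. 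Because $\mu<2$ restricts $\mathcal{G}(A,B)$ to the finite Dynkin list, I would check (case by case, or uniformly through the chain relation together with the Birman-Hilden description of the relevant multitwist subgroups) that this central involution is the only relation of the image not already holding in $\langle T_{A,p},T_{B,q}\rangle$, so that $\tilde\rho$ is injective and hence $|\ker\rho_{A,B}^{p,q}|\le 2$.
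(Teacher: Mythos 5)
Your $\mu_{A,B}^{p,q}\geq 2$ half is complete and is essentially the paper's argument made self-contained: the paper simply asserts that the image is free of rank two and invokes the Hopfian property of the finitely generated free group, whereas you prove freeness by ping-pong on $\mathbb{R}^2$ (the estimate $|x+n\mu y|\geq 2|y|-|x|>|y|$ is correct, the cyclic groups $\langle U\rangle,\langle L\rangle$ are infinite so the two-set ping-pong lemma applies, and torsion-freeness handles the descent to $\PSL(2,\mathbb{R})$), and you replace Hopficity by the direct word-reduction observation, which is valid. No complaints there.

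The $\mu_{A,B}^{p,q}<2$ half has a genuine gap. Your reduction of the bound $|\ker\rho_{A,B}^{p,q}|\leq 2$ to faithfulness of the lift $\tilde\rho$ is sound (and the lift does exist: the graphs are trees with simple edges, so reorienting curves realizes any sign pattern on intersections), but faithfulness of $\tilde\rho$ --- a statement strictly stronger than the proposition, since a priori the kernel could have order $2$ while $\tilde\rho$ is non-injective --- is never proved: ``I would establish this by matching presentations \dots check case by case'' is exactly the hard content, and it is precisely what the paper outsources to Leininger's Theorem 7.3 (a homomorphism $\delta$ from $\ker\rho_{A,B}$ to the bicoloring-preserving automorphisms of $\mathcal{G}(A,B)$, whose image and kernel each have order at most two with one of them trivial), supplemented by Proposition \ref{DT_property}(b) to dispose of the one multiplicity case. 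Moreover, your sketch misstates the target presentation: $\langle U,L\mid (UL)^{n}=1\rangle$ is isomorphic to $\mathbb{Z}\ast\mathbb{Z}_n$ (substitute $x=UL$ and eliminate $L$), not to the Hecke group $\mathbb{Z}_2\ast\mathbb{Z}_q$ of Theorem \ref{Heckegroup}, which requires the second relation $\bigl(U^{-1}(UL)^{\frac{q+1}{2}}\bigr)^2=1$ making $J$ an involution. Consequently, verifying only that $(T_{A,p}T_{B,q})^{n}$ is a central involution via the chain relation cannot yield injectivity of $\tilde\rho$; you would also have to realize the involutive relation among the multitwists --- for the type $A$ chain this is the content of Lemmas \ref{lem:rel0}, \ref{lem:rel1} and Corollary \ref{Cor:relation}, which the paper develops separately --- and nothing in your sketch addresses the $D$ and $E$ configurations (where the Birman--Hilden correspondence you invoke does not apply) or the type $B$ multiplicity case. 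As written, the second half is a plan rather than a proof.
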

\begin{proof}
When $\mu_{A,B}^{p,q}\geq 2$, the image of $\rho_{A,B}^{p,q}$ is a free group on two generators The injectivity follows from the Hopfian property of the finitely generated free group.

When $\mu_{A,B}^{p,q}<2$ with no multiplicities, see \cite[Theorem~7.3]{leininger2004}, one has a homomorphism $\delta$ from $\ker(\rho_{A,B})$ to the automorphism group of the graph preserving the
bicoloring, both the image and kernel are of order at most two and one of them is trivial. For the only case when it's not multiplicity free, by Proposition \ref{DT_property} (b), the homomorphism $\delta$ has a trivial image, hence the proof is also valid.   
\end{proof}

\begin{prop}\label{kernal}
Let $A=\{c_1,c_3,\cdots,c_{2g-1}\}, B=\{c_2,c_4,\cdots,c_{2g}\}$ be the multi curves on $\Sigma_g$ as in Figure \ref{fig:surface} with no multiplicities. We have $$\ker(\rho_{A,B})=<\iota_g>\cong \mathbb{Z}_2\ .$$ 
\end{prop}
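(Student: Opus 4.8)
The plan is to combine the order bound of Proposition \ref{prop:orderbd} with an explicit nontrivial kernel element produced by the Hecke relation, and then to identify that element with $\iota_g$ using the flat structure.

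First I would record the combinatorics. The curves $c_1,\dots,c_{2g}$ form a chain with $i(c_i,c_{i+1})=1$ and $i(c_i,c_j)=0$ for $|i-j|\ge 2$ (Figure \ref{fig:surface}), so with $A$ the odd-indexed and $B$ the even-indexed curves the bipartite graph $\mathcal{G}(A,B)$ is the path $A_{2g}$; since there are no multiplicities, the matrix $M$ is just the adjacency matrix of this path. Its Perron--Frobenius eigenvalue is the graph norm $\mu_{A,B}=2\cos\bigl(\tfrac{\pi}{2g+1}\bigr)<2$. Hence Proposition \ref{prop:orderbd} applies and $|\ker(\rho_{A,B})|\le 2$, so it remains only to exhibit one nontrivial kernel element and to check that it is $\iota_g$.

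Next I would produce the kernel element from the Hecke group. Setting $q=2g+1$, so that $\lambda=2\cos(\pi/q)=\mu_{A,B}$, Theorem \ref{Thconstruct} gives $\rho_{A,B}(T_A)=A_q$ and $\rho_{A,B}(T_B)=B_q$, whence the image is the Hecke group of Theorem \ref{Heckegroup}. By that theorem $(A_qB_q)^q=-I$, i.e. $(A_qB_q)^{2g+1}$ is trivial in $\PSL(2,\mathbb{R})$. Therefore the mapping class $w:=(T_AT_B)^{2g+1}\in\langle T_A,T_B\rangle$ satisfies $\rho_{A,B}(w)=I$, so $w\in\ker(\rho_{A,B})$. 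Observe that $w$ lies in the domain of $\rho_{A,B}$ by construction, which neatly sidesteps the fact that $\langle T_A,T_B\rangle$ contains only the two multitwists and not the individual $T_{c_i}$.

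It remains to identify $w$ with the hyperelliptic involution and to confirm it is nontrivial, and this is where the hard part lies. Here I would pass to the flat structure of Thurston's construction (Figure \ref{fig:Fstructure}): $T_A,T_B$ act affinely with derivatives $A_q,B_q$, so $w$ acts affinely with derivative $(A_qB_q)^{2g+1}=-I$. An orientation-preserving affine automorphism with derivative $-I$ is the rotation by $\pi$ about the (unique) singular point, which is exactly the hyperelliptic involution $\iota_g$ of Figure \ref{fig:surface}; thus $w=\iota_g$. As a cross-check of nontriviality I would compute the action on $H_1(\Sigma_g;\mathbb{Z})$ in the basis $[c_1],\dots,[c_{2g}]$, where each $T_{c_i}$ is the transvection $x\mapsto x+\langle x,[c_i]\rangle [c_i]$; a short calculation (whose eigenvalues are primitive $(4g+2)$-th roots of unity) yields $w=-I$ on homology, so $w\ne 1$. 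Since $\ker(\rho_{A,B})$ has order at most two and contains the order-two element $\iota_g=w$, I conclude $\ker(\rho_{A,B})=\langle\iota_g\rangle\cong\mathbb{Z}_2$.

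I expect the genuine obstacle to be the identification $w=\iota_g$, as opposed to the weaker ``$w$ is some order-two class acting by $-I$ on homology''. Because $T_AT_B$ reorders the chain product $T_{c_1}\cdots T_{c_{2g}}$, one cannot read $\iota_g$ off a chain relation directly, and for $g\ge 3$ an order-two class acting by $-I$ on homology is unique only up to conjugacy. The flat-structure symmetry is what pins $w$ to the specific $\iota_g$ of Figure \ref{fig:surface}; making ``derivative $-I$ forces the point reflection'' precise requires that this flat surface admit no nontrivial translation automorphism, which I would deduce from its single-vertex ($0$-cell) cell structure noted after Definition \ref{Def:flatsurface}.
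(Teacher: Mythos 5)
Your overall architecture---the order bound from Proposition \ref{prop:orderbd} plus the kernel element $w=(T_AT_B)^{2g+1}$ read off from the Hecke relation $(A_qB_q)^q=-I$---is sound, and it genuinely differs from the paper at the identification step: the paper never uses the flat structure here, but instead proves $(T_{\tilde{A_g}}T_{\tilde{B_g}})^{2g+1}=\iota_g$ outright in $\Mod(\Sigma_g)$ by braid-group algebra. Lemma \ref{lem:rel0} obtains $(T_{A_g}T_{B_g})^{2g+1}=\iota_g$ from the chain relation for the chain $c_1,\dots,c_{2g+1}$, and Lemma \ref{lem:rel1} conjugates the ordered product $T_{c_1}\cdots T_{c_{2g}}$ into your bipartite product $T_AT_B$; after that, $\iota_g\in\ker(\rho_{A,B})$ is immediate and the order bound finishes. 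Two smaller points on your version: the single-vertex claim you need for all $g$ (the paper only notes it for $g=2$) does hold and follows from an Euler characteristic count of the dual cell structure ($F=2g-1$ rectangles, $E=\tfrac{4(2g-1)}{2}=4g-2$, hence $V=1$); and in your homology cross-check the parenthetical is slightly off---when $2g+1$ is composite not all eigenvalues of $T_AT_B$ on $H_1$ are primitive $(4g+2)$-th roots of unity---but every eigenvalue does have $(2g+1)$-st power $-1$, which is all you use, so $w=-I$ on homology stands.

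The genuine gap is exactly where you suspected, and the fix you propose does not close it. Triviality of translation automorphisms (plus the unique singular point) shows only that $w$ \emph{is} the point reflection: indeed $w$ fixes the singular point and has derivative $-I$, so by flat rigidity it is that rotation, and uniqueness of the derivative-$(-I)$ automorphism is then automatic. What remains unproved is the clause you state with ``is exactly'': that this point reflection is isotopic to the \emph{specific} involution $\iota_g$ of Figure \ref{fig:surface}, i.e.\ the word $T_{c_{2g+1}}T_{c_{2g}}\cdots T_{c_1}T_{c_1}\cdots T_{c_{2g}}T_{c_{2g+1}}$ used throughout the rest of the paper. For $g\le 2$ this comes for free, since the hyperelliptic involution is central and hence unique; but for $g\ge 3$, as you yourself note, an order-two class acting by $-I$ on homology is unique only up to conjugacy, so a genuine matching between the polygon picture (Figure \ref{fig:Fstructure}) and the embedded-surface picture (Figure \ref{fig:surface}) is required. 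Concretely, one must check that the point reflection and $\iota_g$ agree on the curves $c_0,\dots,c_{2g+1}$ and then invoke Proposition \ref{DT_property}(b) together with triviality of the center of $\Mod(\Sigma_g)$ for $g\ge 3$---precisely the routine the paper carries out for the harder element $T_J$ in Lemma \ref{Lem:Tj}, where the curves $c_0$ and $c_{2g+1}$ are the delicate ones (the chain curves are cylinder cores and easy, but twists about $c_1,\dots,c_{2g}$ alone do not generate $\Mod(\Sigma_g)$, so checking only those does not suffice). Until that curve-chasing (or the paper's algebraic Lemmas \ref{lem:rel0} and \ref{lem:rel1}) is supplied, your argument proves that $\ker(\rho_{A,B})\cong\mathbb{Z}_2$ is generated by \emph{some} hyperelliptic involution, but not the stated equality with $\langle\iota_g\rangle$.
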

\begin{proof}
It follows from the next lemmas.
\end{proof}

\begin{lem}\label{lem:rel0}
Let $T_{c_i}\  (1\leq i\leq 2g+1 )$ be the Dehn twists around simple closed curves as in Figure \ref{fig:surface}. Let $T_{A_g}:=T_{c_1}T_{c_2}\ldots T_{c_g},\ T_{B_g}:=T_{c_{g+1}}T_{c_{g+2}}\ldots T_{c_{2g}}$ and $\iota_g= T_{c_{2g+1}}T_{c_{2g}}\ldots T_{c_1}T_{c_1}\ldots T_{c_{2g}}T_{c_{2g+1}}$ is a hyperelliptic involution shown in Figure \ref{fig:surface}, then we have the relation
\begin{equation}
\begin{array}{cc}
   (T_{A_g}T_{B_g})^{2g+1}&=\iota_g\ , \\ 
    (T_{A_g}^{-1}(T_{A_g}T_{B_g})^{g+1})^2&=\iota_g\ .  \\
\end{array}
\end{equation}
\end{lem}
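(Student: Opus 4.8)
The plan is to prove both identities inside the symmetric mapping class group $\SMod(\Sigma_g)=\langle T_{c_1},\dots,T_{c_{2g+1}}\rangle$, using three inputs from the preceding material: (i) by the Birman--Hilden theorem the $T_{c_i}$ satisfy the braid relations of $\mathcal{B}_{2g+2}$, namely $T_{c_i}T_{c_j}=T_{c_j}T_{c_i}$ for $|i-j|\ge 2$ and $T_{c_i}T_{c_{i+1}}T_{c_i}=T_{c_{i+1}}T_{c_i}T_{c_{i+1}}$; (ii) $\iota_g$ is the central element generating the kernel of the Birman--Hilden surjection $\SMod(\Sigma_g)\twoheadrightarrow\Mod(S_{0,2g+2})$; and (iii) the chain relation. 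Writing $x:=T_{A_g}$, $y:=T_{B_g}$ and $c:=xy$, the identities read $c^{2g+1}=\iota_g$ and $(x^{-1}c^{g+1})^2=\iota_g$; I note in passing that with $q=2g+1$ these are exactly the topological lifts of $(A_qB_q)^{q}=-I$ and $(A_q^{-1}(A_qB_q)^{(q+1)/2})^2=-I$ from Theorem~\ref{Heckegroup}, with $\iota_g$ playing the role of $-I$.

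For the first identity, observe that $c=xy$ is a word in $T_{c_1},\dots,T_{c_{2g}}$ in which each twist occurs exactly once; it is conjugate to (and for the displayed ordering simply equals) the linearly ordered product $L:=T_{c_1}T_{c_2}\cdots T_{c_{2g}}$, since any two Coxeter elements of the chain are related by far commutativity and source-to-sink moves, each of which is a conjugation in $\langle T_{c_i}\rangle$. The chain $c_1,\dots,c_{2g}$ fills a one-holed genus-$g$ subsurface whose boundary bounds a disc, so the even chain relation gives $L^{4g+2}=1$ and hence $L^{2g+1}$ is an involution. To see that it is $\iota_g$ and not the identity I pass to $\Mod(S_{0,2g+2})$: there $\bar L^{2g+1}=(\bar\sigma_1\cdots\bar\sigma_{2g})^{2g+1}$ is, by the punctured-sphere chain relation, the twist about a curve enclosing $2g+1$ of the $2g+2$ punctures, which also encircles the single remaining puncture and is therefore trivial, so $L^{2g+1}\in\langle\iota_g\rangle$; a direct computation of the action on $H_1(\Sigma_g)$ shows $\hat L^{2g+1}=-I$, so $L^{2g+1}\ne 1$ and thus $L^{2g+1}=\iota_g$. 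Since $\iota_g$ is central, conjugation cannot move it and $c^{2g+1}=L^{2g+1}=\iota_g$.

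For the second identity, rewrite $x^{-1}c^{g+1}=y(xy)^g=(yx)^g y$ and call this element $s$. Reducing modulo $\iota_g$ and using $\bar c^{\,2g+1}=1$ gives $\bar c^{\,g+1}=\bar c^{\,-g}$, so it suffices to show $\bar s=\bar x^{-1}\bar c^{\,-g}$ is an involution of the $(2g+2)$-punctured sphere. The cleanest route is to recognize $s$ as the explicit order-two symmetry underlying the Hecke generator $J$ (the affine symmetry of the flat structure produced in Section~3), giving $\bar s^2=1$ and hence $s^2\in\langle\iota_g\rangle$; a homology computation then pins down the ambiguity, since for $g=1$ one finds $\hat s=\begin{pmatrix}0&1\\-1&0\end{pmatrix}$ and in general $\hat s^2=-I$, forcing $s^2=\iota_g$. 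Alternatively one can bypass the quotient entirely and grind the braid relations, transporting $s^2$ into the palindromic normal form $T_{c_{2g+1}}\cdots T_{c_1}T_{c_1}\cdots T_{c_{2g+1}}=\iota_g$.

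The conjugacy $c\sim L$ and the two chain relations are routine; the genuinely delicate point, and the one I expect to be the main obstacle, is resolving the $\langle\iota_g\rangle$-ambiguity, i.e. showing each half-power lands on the nontrivial $\iota_g$ rather than the identity. This amounts to (a) verifying the homological identities $\hat L^{2g+1}=-I$ and $\hat s^2=-I$ uniformly in $g$, and (b) for the second identity establishing that $s$ descends to an honest involution of $\Mod(S_{0,2g+2})$—transparent through the explicit geometric model of $s$, but otherwise demanding a careful braid-word reduction to the palindrome defining $\iota_g$.
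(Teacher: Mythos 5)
Your treatment of the first identity is sound and takes a genuinely different route from the paper's. You square the statement via the even chain relation for the $2g$-chain filling $\Sigma_g$ (giving $L^{4g+2}=T_d=1$), locate $L^{2g+1}$ in $\langle\iota_g\rangle$ through the Birman--Hilden quotient (where the full twist on $2g+1$ of the $2g+2$ punctures is a twist about a curve bounding a once-punctured disc, hence trivial), and separate $\iota_g$ from $1$ homologically. The paper instead proves it in one stroke: a braid-relation computation gives $(T_{A_g}T_{B_g}T_{c_{2g+1}})^{2g+2}=(T_{A_g}T_{B_g})^{2g+1}\iota_g$, and the odd chain relation for the full chain $c_1,\dots,c_{2g+1}$ on the closed surface kills the left-hand side, with no case analysis and no homology needed. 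Your deferred claim $\hat{L}^{2g+1}=-I$ is true and standard (one checks the symplectic image of $L$ has characteristic polynomial $(t^{2g+1}+1)/(t+1)$, whose $2g$ distinct roots satisfy $\lambda^{2g+1}=-1$), so this half would survive, if less efficiently than the paper's version.

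The second identity is where your proposal has a genuine gap, and you flag it yourself. Everything hinges on $\bar{s}^2=1$ for $s=T_{B_g}(T_{A_g}T_{B_g})^{g}$, and your ``cleanest route''---recognizing $s$ as the order-two affine symmetry of the flat structure---does not apply to this $s$: Thurston's construction produces affine actions only for multitwists along \emph{disjoint} curves, whereas $T_{A_g}=T_{c_1}\cdots T_{c_g}$ is a product of twists along a chain of pairwise intersecting curves and does not act affinely on that flat structure. The affine symmetry pertains to the alternating multitwists $T_{\tilde{A_g}},T_{\tilde{B_g}}$, which are related to $T_{A_g},T_{B_g}$ only through the conjugation of Lemma \ref{lem:rel1}; worse, within the paper the identification of that symmetry with an explicit mapping-class word is Lemma \ref{Lem:Tj}, which is proved \emph{after} and \emph{using} the present lemma (via Theorem \ref{Thm:Tj}), so the appeal is circular in the paper's logic as well as misattributed. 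Your fallback, ``grind the braid relations,'' is indeed the paper's route, but its feasibility rests on one identity you never produce: conjugation by the root of the center of $\mathcal{B}_{2g+1}$ gives $\sigma^m(T_{A_g})\,T_{A_g}T_{B_g}=T_{A_g}T_{B_g}\,\sigma^{m-1}(T_{A_g})$ for $m\leq g$ (equation (\ref{rel1})), whence $T_{B_g}(T_{A_g}T_{B_g})^{g}=(T_{A_g}T_{B_g})^{g}T_{A_g}$ and then $s^2=(T_{A_g}T_{B_g})^{2g+1}=\iota_g$ follows instantly from the first identity---no quotient, no descent, and no $\hat{s}^2=-I$ (which you assert in general but verify only for $g=1$, and whose uniform proof is essentially equivalent to the missing relation anyway). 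This shift relation is the missing key idea; without it, neither of your two proposed finishes for the second identity closes.
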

\begin{proof}
From direct computation only using braid group relations we have
\begin{equation*}
(T_{A_g}T_{B_g}T_{c_{2g+1}})^{2g+2}
   =(T_{A_g}T_{B_g})^{2g+1}\iota_g\ . 
\end{equation*}
Take a closed regular neighborhood of the union of curves $c_i$ ($1\leq i\leq 2g+1 $), as the boundary of this neighborhood consists of two nullhomotopic simple closed curves, from the chain relation \cite[Proposition~4.12]{FM12}, the left-hand side $=1$ (it is not hard to get the same relation for right Dehn twists), hence we get first equality.

For the second relation, combined with the first one, it suffices to prove
\begin{equation}\label{rel0}
T_{B_g}(T_{A_g}T_{B_g})^g=(T_{A_g}T_{B_g})^g T_{A_g}\ .    
\end{equation}
Now let $\sigma^n(T_{A_g})=T_{c_{1+n}}T_{c_{2+n}}\ldots T_{c_{g+n}}$, hence $\sigma^g(T_{A_g})=T_{B_g}$. The following relation holds in $\mathcal{B}_{2g+1}$ (since $T_{c_{i+1}}T_{A_g}T_{B_g}=T_{A_g}T_{B_g}T_{c_i}$), hence in $\Mod(\Sigma_g)$
\begin{equation}\label{rel1}
    \sigma^m(T_{A_g})T_{A_g}T_{B_g}=T_{A_g}T_{B_g}\sigma^{m-1}(T_{A_g})\ .\ \ (m\leq g)
\end{equation}
Now (\ref{rel0}) follows from (\ref{rel1})
\[
\begin{aligned}
  (T_{A_g}T_{B_g})^g T_{A_g}&=(T_{A_g}T_{B_g})^{g-1}\sigma(T_{A_g})(T_{A_g}T_{B_g})\\
  &=\cdots=\sigma^g(T_{A_g})(T_{A_g}T_{B_g})^g \\
  &=T_{B_g}(T_{A_g}T_{B_g})^g.
\end{aligned}
\]
Hence the lemma is proved.
\end{proof}

\begin{lem}\label{lem:rel1}
Same setting as before, let $T_{\tilde{A_g}}:=T_{c_1}T_{c_3}\ldots T_{c_{2g-1}},\ T_{\tilde {B_g}}:=T_{c_2}T_{c_4}\ldots T_{c_{2g}}$, we have $T_{\tilde{A_g}}T_{\tilde{B_g}}=x_g^{-1}T_{A_g}T_{B_g}x_g$, where $x_g=T_{A_{g-1}}T_{B_{g-1}}x_{g-1},\ x_1=1$.
\end{lem}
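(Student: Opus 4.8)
The plan is to prove the identity entirely inside the braid group. By the Birman--Hilden theorem the twists $\sigma_i:=T_{c_i}$ ($1\le i\le 2g$) satisfy the braid relations, and in fact I expect that only the far-commutativity relations $\sigma_i\sigma_j=\sigma_j\sigma_i$ for $|i-j|\ge 2$ (Proposition \ref{DT_property}(c)) will be needed. Writing $\delta_g:=T_{A_g}T_{B_g}=\sigma_1\sigma_2\cdots\sigma_{2g}$ and $\omega_g:=T_{\tilde{A_g}}T_{\tilde{B_g}}=(\sigma_1\sigma_3\cdots\sigma_{2g-1})(\sigma_2\sigma_4\cdots\sigma_{2g})$, and unrolling the recursion to $x_g=\delta_{g-1}\delta_{g-2}\cdots\delta_1$ (with $x_1=1$), the claim becomes $x_g\,\omega_g\,x_g^{-1}=\delta_g$, which I will prove by induction on $g$. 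The base case $g=1$ is immediate, since $\omega_1=\sigma_1\sigma_2=\delta_1$ and $x_1=1$.

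For the inductive step I would isolate three elementary facts, each obtained purely by commuting generators. First, a factorization $\omega_g=\omega_{g-1}\,w_g$ with $w_g=\sigma_{2g-2}^{-1}\sigma_{2g-1}\sigma_{2g-2}\sigma_{2g}$: this comes from appending $\sigma_{2g-1}$ and $\sigma_{2g}$ to the two blocks of $\omega_{g-1}$ and commuting $\sigma_{2g-1}$ leftward through the even block $\sigma_2\sigma_4\cdots\sigma_{2g-2}$, where every factor except $\sigma_{2g-2}$ commutes with it. Second, a support observation: since $x_{g-1}=\delta_{g-2}\cdots\delta_1$ is a word in $\sigma_1,\dots,\sigma_{2g-4}$ only, while $w_g$ involves only $\sigma_{2g-2},\sigma_{2g-1},\sigma_{2g}$, the two commute. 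Third, a conjugation identity $\delta_{g-1}\,w_g\,\delta_{g-1}^{-1}=\sigma_{2g-1}\sigma_{2g}$: here $\sigma_{2g}$ commutes with $\delta_{g-1}$, and writing $\delta_{g-1}=\sigma_1\cdots\sigma_{2g-3}\cdot\sigma_{2g-2}$, the trailing $\sigma_{2g-2}$ cancels the inner $\sigma_{2g-2}$-conjugation in $w_g$ while the prefix $\sigma_1\cdots\sigma_{2g-3}$ commutes past $\sigma_{2g-1}$, leaving $\sigma_{2g-1}$.

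Combining these with the inductive hypothesis $x_{g-1}\omega_{g-1}x_{g-1}^{-1}=\delta_{g-1}$ and $x_g=\delta_{g-1}x_{g-1}$, I compute
\[
\begin{aligned}
x_g\,\omega_g\,x_g^{-1}
&=\delta_{g-1}\bigl(x_{g-1}\omega_{g-1}x_{g-1}^{-1}\bigr)\bigl(x_{g-1}w_g x_{g-1}^{-1}\bigr)\delta_{g-1}^{-1}\\
&=\delta_{g-1}(\delta_{g-1}w_g)\delta_{g-1}^{-1}
=\delta_{g-1}\sigma_{2g-1}\sigma_{2g}
=\delta_g,
\end{aligned}
\]
where the second equality uses the support observation together with the induction hypothesis, and the third regroups as $\delta_{g-1}\bigl(\delta_{g-1}w_g\delta_{g-1}^{-1}\bigr)$ and applies the conjugation identity. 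This closes the induction and proves the lemma.

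The main obstacle I anticipate is not any deep relation but the index bookkeeping: one must verify precisely that the conjugator $x_{g-1}$ produced by the recursion is supported on the generators $\sigma_1,\dots,\sigma_{2g-4}$, so that it is disjoint (in the sense of commuting) from the freshly introduced letters $\sigma_{2g-2},\sigma_{2g-1},\sigma_{2g}$. It is worth emphasizing that adjacent generators such as $\sigma_2$ and $\sigma_3$ do \emph{not} commute, which is exactly why the naive equality $\omega_g=\delta_g$ fails and a nontrivial conjugator is forced; nevertheless every manipulation above is effected purely by far-commutativity, so the braid relation itself is never invoked.
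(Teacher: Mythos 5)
Your proof is correct and takes essentially the same route as the paper's: an induction on $g$ carried out inside the braid group, with the identical unrolled conjugator $x_g=\delta_{g-1}\delta_{g-2}\cdots\delta_1$ and with only far-commutativity ever invoked. The lone (cosmetic) difference is that the paper first re-sorts $\omega_g$ into the descending interleaved form $\sigma_{2g-1}\sigma_{2g}\,\sigma_{2g-3}\sigma_{2g-2}\cdots\sigma_1\sigma_2$, so the fresh letters peel off on the \emph{left} and the inductive step is a one-line insertion of $\delta_k^{-1}\delta_k$, whereas you peel the \emph{right} factor $w_g=\sigma_{2g-2}^{-1}\sigma_{2g-1}\sigma_{2g-2}\sigma_{2g}$ and compensate with your support observation (your precise claim that $x_{g-1}$ is supported on $\sigma_1,\dots,\sigma_{2g-4}$ is in fact sharper than the paper's parenthetical ``$x_k$ commutes with $T_{c_l}$ for $l\geq k$'') together with the conjugation identity $\delta_{g-1}w_g\delta_{g-1}^{-1}=\sigma_{2g-1}\sigma_{2g}$.
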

\begin{proof}
By moving the elements with a larger index to the left we can rewrite the product:
\begin{equation}
T_{\tilde{A_g}}T_{\tilde{B_g}}=T_{c_{2g-1}}T_{c_{2g}}T_{c_{2g-3}}T_{c_{2g-2}}\ldots T_{c_3}T_{c_4}T_{c_1}T_{c_2}\ .
\end{equation}
Now we prove the lemma by induction. When $g=1$, the statement is obvious, suppose it's true for $g=k$, then when $g=k+1$ we have

\begin{align*}
  T_{\tilde{A_{k+1}}}T_{\tilde{B_{k+1}}}&=T_{c_{2k+1}}T_{c_{2k+2}}T_{\tilde{A_k}}T_{\tilde{B_k}}\\
  &=T_{c_{2k+1}}T_{c_{2k+2}}(x_k^{-1}T_{A_k}T_{B_k}x_k)\\
  &=x_k^{-1}T_{c_{2k+1}}T_{c_{2k+2}}T_{A_k}T_{B_k}x_k\  (x_k\ commutes\  with\ T_{c_l}\ for\ l\geq k ) \\
  &=x_k^{-1}(T_{A_k}T_{B_k})^{-1}T_{A_k}T_{B_k}T_{c_{2k+1}}T_{c_{2k+2}}T_{A_k}T_{B_k}x_k\\
  &=x_{k+1}^{-1}T_{A_{k+1}}T_{B_{k+1}}x_{k+1}\ .
  \end{align*}
   \end{proof}
   
 From Lemma \ref{lem:rel0} and \ref{lem:rel1}, and the fact that $\iota_g$ commutes with all these Dehn twists, we have
  \begin{equation}\label{eq:kernel}
      (T_{\tilde{A_g}}T_{\tilde{B_g}})^{2g+1}=\iota_g \ .
  \end{equation}
Now from equation (\ref{eq:kernel}), $\iota_g\in \ker(\rho_{A,B})$, which has the order of at most $2$ (Proposition \ref{prop:orderbd}). Hence the Proposition \ref{kernal} is proved.

\begin{figure}[t]
\centering
\includegraphics[width=0.7\textwidth]{{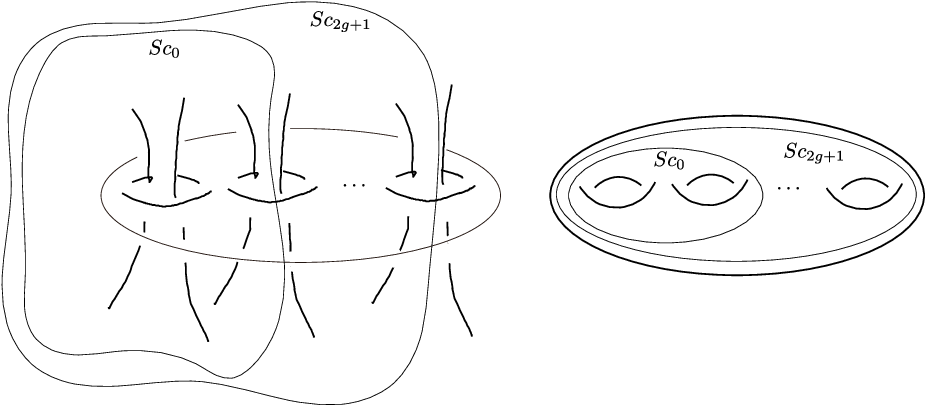}}
\caption{A Heegaard splitting represented by $S\in \Mod(\Sigma_g)$}
\label{fig:Hsplitting}
\end{figure}
 
 \begin{figure}[b]
\centering
\includegraphics[width=0.3\textwidth]{{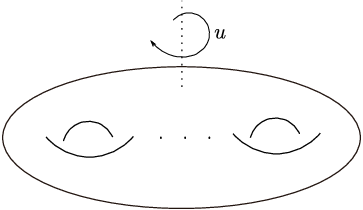}}
\caption{The element $u\in \Mod(\Sigma_g)$}
\label{fig:u}
\end{figure}
  \begin{thm} \label{Thm:Tj}
   Let $T_J=\iota_g^gT_{\tilde{A_g}}^{-1}(T_{\tilde{A_g}}T_{\tilde{B_g}})^{g+1}$, we have an injective homomorphism $\rho:\tilde{\Gamma}_{2g+1}\to \SMod(\Sigma_g)\subset \Mod(\Sigma_g)$, by sending $A_{2g+1}$, $B_{2g+1}$, $J$ to $T_{\tilde{A_g}}$, $T_{\tilde{B_g}}$, $T_J$ respectively.
   \end{thm}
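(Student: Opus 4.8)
The plan is to exhibit $G:=\langle T_{\tilde{A_g}},T_{\tilde{B_g}}\rangle\subset\Mod(\Sigma_g)$ as a central extension of the Hecke group $\Gamma_{2g+1}$ by $\mathbb{Z}_2$ and to recognize this extension as $\tilde{\Gamma}_{2g+1}$. First I would identify the binding data: for $A=\{c_1,c_3,\dots,c_{2g-1}\}$ and $B=\{c_2,c_4,\dots,c_{2g}\}$ the graph $\mathcal{G}(A,B)$ is the path on $2g$ vertices, whose Perron--Frobenius eigenvalue is $\mu_{A,B}=2\cos\frac{\pi}{2g+1}=\lambda_{2g+1}$. Thus Thurston's construction (Theorem \ref{Thconstruct}) produces a homomorphism $\rho_{A,B}\colon G\to\PSL(2,\mathbb{R})$ with $T_{\tilde{A_g}}\mapsto A_{2g+1}$ and $T_{\tilde{B_g}}\mapsto B_{2g+1}$, so its image is exactly $\Gamma_{2g+1}$; since $\mu_{A,B}<2$, Proposition \ref{kernal} gives $\ker\rho_{A,B}=\langle\iota_g\rangle\cong\mathbb{Z}_2$, which is central. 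The definition of $T_J$ mirrors the word (\ref{J:def}) for $J$, and applying $\rho_{A,B}$ gives $\rho_{A,B}(T_J)=A_{2g+1}^{-1}(A_{2g+1}B_{2g+1})^{g+1}=J$ in $\Gamma_{2g+1}$ by Theorem \ref{Heckegroup}; in particular $T_J^2\in\ker\rho_{A,B}=\{1,\iota_g\}$.

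The heart of the argument, and the step I expect to be the main obstacle, is the identity $T_J^2=\iota_g$ --- equivalently, that $G$ is the \emph{non-split} extension. Conceptually this is what one wants: since $H^2(\Gamma_{2g+1};\mathbb{Z}_2)=H^2(\mathbb{Z}_2;\mathbb{Z}_2)\oplus H^2(\mathbb{Z}_{2g+1};\mathbb{Z}_2)\cong\mathbb{Z}_2$, there are only two central $\mathbb{Z}_2$-extensions of $\Gamma_{2g+1}$, and $\tilde{\Gamma}_{2g+1}$ is the non-trivial one (its element $J$ has order $4$, not $2$). I would prove $T_J^2=\iota_g$ geometrically, using the flat structure of Figure \ref{fig:Fstructure}: the colour-swapping symmetry $i\mapsto 2g+1-i$ of the path $\mathcal{G}(A,B)$ realizes the rotation by $\tfrac{\pi}{2}$ of the flat surface as an order-four affine automorphism $\hat J$ with derivative $\left(\begin{smallmatrix}0&-1\\1&0\end{smallmatrix}\right)$; its square has derivative $-I$ and is therefore the hyperelliptic involution $\iota_g$. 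Since $\rho_{A,B}(\hat J)=J=\rho_{A,B}(T_J)$, we get $T_J=\hat J$ or $\iota_g\hat J$, and in either case $T_J^2=\hat J^2=\iota_g$; the same rotation picture yields $T_JT_{\tilde{A_g}}T_J^{-1}=T_{\tilde{B_g}}$. A computation-heavy fallback avoiding the flat geometry is to compute the action of $T_J$ on $H_1(\Sigma_g;\mathbb{Z})$ as a product of transvections and check directly that $T_J^2$ acts as $-I$, which forces $T_J^2=\iota_g$ rather than $1$.

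With $T_J^2=\iota_g$ in hand I would build the homomorphism using the amalgam $\tilde{\Gamma}_{2g+1}\cong\mathbb{Z}_4*_{\mathbb{Z}_2}\mathbb{Z}_{2(2g+1)}$ of Theorem \ref{Heckegroup}, with factors $\langle J\rangle$ and $\langle A_{2g+1}J\rangle$ glued along $\langle J^2\rangle=\langle(A_{2g+1}J)^{2g+1}\rangle$. Setting $\phi(J)=T_J$ and $\phi(A_{2g+1}J)=T_{\tilde{A_g}}T_J=\iota_g^{\,g}(T_{\tilde{A_g}}T_{\tilde{B_g}})^{g+1}$, the universal property of the amalgam requires only three checks: $T_J^4=1$ (immediate from $T_J^2=\iota_g$); $(T_{\tilde{A_g}}T_J)^{2(2g+1)}=1$; and the gluing relation $T_J^2=(T_{\tilde{A_g}}T_J)^{2g+1}$. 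The last two reduce to the single relation $(T_{\tilde{A_g}}T_{\tilde{B_g}})^{2g+1}=\iota_g$, which is exactly (\ref{eq:kernel}): a short exponent count gives $(T_{\tilde{A_g}}T_J)^{2g+1}=\iota_g^{\,g(2g+1)+g+1}=\iota_g=T_J^2$, whence also $(T_{\tilde{A_g}}T_J)^{2(2g+1)}=\iota_g^2=1$.

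Finally, $\phi$ is a morphism of central extensions covering the identity of $\Gamma_{2g+1}$ (since $\rho_{A,B}\circ\phi$ is the quotient map), and it restricts to the isomorphism $J^2\mapsto\iota_g$ on the $\mathbb{Z}_2$ kernels; by the five lemma $\phi$ is an isomorphism onto $G$, hence injective, and I rename it $\rho$. By construction $\rho(A_{2g+1})=T_{\tilde{A_g}}$ and $\rho(J)=T_J$, while $\rho(B_{2g+1})=T_{\tilde{B_g}}$ follows from $B_{2g+1}=JA_{2g+1}J^{-1}$ together with $T_JT_{\tilde{A_g}}T_J^{-1}=T_{\tilde{B_g}}$. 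That the image lies in $\SMod(\Sigma_g)$ is immediate, since $T_{\tilde{A_g}}$ and $T_{\tilde{B_g}}$ are products of the twists $T_{c_i}$ $(1\le i\le 2g)$, which generate $\SMod(\Sigma_g)$ by the Birman--Hilden theorem.
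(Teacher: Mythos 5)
Your proposal is correct in substance, but it takes a genuinely different route from the paper, and the difference is instructive. The paper's proof is much shorter: it invokes the lifting clause of Theorem \ref{Thconstruct} --- the curves in $A\cup B$ can be oriented so that geometric and algebraic intersection numbers coincide --- to promote $\rho_{A,B}$ to a linear representation into $\SL(2,\mathbb{R})$. Since $\rho_{A,B}(\iota_g)=\rho_{A,B}\bigl((T_{\tilde{A_g}}T_{\tilde{B_g}})^{2g+1}\bigr)=(A_{2g+1}B_{2g+1})^{2g+1}=-I\neq I$ by (\ref{eq:kernel}) and Theorem \ref{Heckegroup}, and the projective kernel is exactly $\langle\iota_g\rangle$ by Proposition \ref{kernal}, the lifted map is injective on $\langle T_{\tilde{A_g}},T_{\tilde{B_g}}\rangle$ with image exactly $\tilde{\Gamma}_{2g+1}$, and $\rho$ is its inverse; $T_J$ is by construction the preimage of the word (\ref{J:def}). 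In particular, the paper gets $T_J^2=\iota_g$ afterwards for free (Corollary \ref{Cor:relation}), whereas your route must establish it up front --- and you correctly identify this as the crux, since it does not follow formally from (\ref{eq:kernel}) alone. You stay in $\PSL(2,\mathbb{R})$, determine the extension class geometrically via the order-four rotation $\hat J$, then verify the amalgam presentation (your exponent count $(T_{\tilde{A_g}}T_J)^{2g+1}=\iota_g^{g(2g+1)+g+1}=\iota_g$ is right, as $2g^2+2g+1$ is odd) and conclude by the five lemma. What your route buys: no orientation bookkeeping, and an explicit explanation of why the extension is the nonsplit one. What it costs: you import Veech-theoretic facts (the $\tfrac{\pi}{2}$-symmetry, which requires checking $|c_i|=|c_{2g+1-i}|$, and the embedding of the affine group into $\Mod(\Sigma_g)$) that the paper defers until Lemma \ref{Lem:Tj}, where they are needed only for the finer geometric description of $T_J$, not for Theorem \ref{Thm:Tj} itself.

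One step of yours is glib and should be patched: ``its square has derivative $-I$ and is therefore the hyperelliptic involution $\iota_g$.'' The derivative of an affine automorphism does not by itself determine its isotopy class, nor its action on $H_1(\Sigma_g;\mathbb{Z})$. The repair is standard but needs saying: this flat surface has a single cone point, so its only translation automorphism is the identity; hence $\hat J^2$ is the \emph{unique} affine automorphism with derivative $-I$, and since $\iota_g=(T_{\tilde{A_g}}T_{\tilde{B_g}})^{2g+1}$ is affine, nontrivial, and has derivative $\pm I$ (it lies in $\ker\rho_{A,B}$), uniqueness forces $\hat J^2=\iota_g$. Cheaper still, you can bypass identifying $\hat J^2$ altogether: $T_J^2\in\ker\rho_{A,B}=\{1,\iota_g\}$ by Proposition \ref{kernal}, and $T_J=\hat J^{\pm1}$ gives $T_J^2=\hat J^{2}\neq 1$ because the affine group embeds in $\Mod(\Sigma_g)$ for $g\geq 2$, so $T_J^2=\iota_g$. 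With that patch your argument is complete and yields the same conjugation relation $T_JT_{\tilde{A_g}}T_J^{-1}=T_{\tilde{B_g}}$ needed for $\rho(B_{2g+1})=T_{\tilde{B_g}}$.
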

   
  \begin{proof}
 The graph $\mathcal{G}(A,B)$ is of type $A$ with $2g$ vertices, and it is not hard to orient curves so that their geometric and algebraic intersection numbers coincide as indicated in Figure \ref{fig:surface} and Figure \ref{fig:Fstructure} (one may observe the difference coming from the parity of $g$). Therefore the map $\rho_{A,B}$ factors through $\SL(2,\mathbb{R})$. Now from Theorem \ref{Heckegroup}, \ref{Thconstruct}, Proposition \ref{kernal} and the fact that $\rho_{A,B}(\iota_{g})=\rho_{A,B}((T_{\tilde{A_g}}T_{\tilde{B_g}})^{2g+1})=-I$, we have $$\tilde{\Gamma}_{2g+1}\cong <T_{\tilde{A_g}},T_{\tilde{B_g}}>\xhookrightarrow{} \SMod(\Sigma_g)\ .$$
  \end{proof}
  
  \begin{cor}\label{Cor:relation}
  We have $T_J^2=\iota_g$ and $T_J=\iota_g^g (T_{\tilde{A_g}}T_{\tilde{B_g}})^{g}T_{\tilde{A_g}}$.
  \end{cor}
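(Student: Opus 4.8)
The plan is to transport both identities to the Hecke group $\tilde{\Gamma}_{2g+1}$ through the isomorphism $\rho$ of Theorem~\ref{Thm:Tj}. Under $\rho$ we have $T_{\tilde{A_g}}=\rho(A_{2g+1})$ and $T_{\tilde{B_g}}=\rho(B_{2g+1})$, while $T_J=\rho(J)$ by the very definition of $\rho$. Moreover $\rho(-I)=\iota_g$: indeed $(A_{2g+1}B_{2g+1})^{2g+1}=-I$ in $\SL(2,\mathbb{R})$ (this is the computation $J^2=(A_qB_q)^q=-I$ recorded in the proof of Theorem~\ref{Heckegroup}), whereas $\rho\big((A_{2g+1}B_{2g+1})^{2g+1}\big)=(T_{\tilde{A_g}}T_{\tilde{B_g}})^{2g+1}=\iota_g$ by (\ref{eq:kernel}). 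Writing $A=A_{2g+1}$ and $B=B_{2g+1}$, it therefore suffices to verify inside $\tilde{\Gamma}_{2g+1}$ the two matrix identities $J^2=-I$ and $J=(-1)^g(AB)^gA$, and then push them forward by $\rho$.

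The first identity is immediate from $J=\begin{pmatrix}0&-1\\1&0\end{pmatrix}$, so that $T_J^2=\rho(J^2)=\rho(-I)=\iota_g$. For the second, I would first collapse the defining expression (\ref{J:def}). With $q=2g+1$ the exponents read $\tfrac{q(q-1)}{2}=(2g+1)g$ and $\tfrac{q+1}{2}=g+1$; since $(AB)^{2g+1}=-I$ is central this gives $(AB)^{(2g+1)g}=(-1)^gI$ and hence $J=(-1)^gA^{-1}(AB)^{g+1}$. Thus the target $J=(-1)^g(AB)^gA$ is equivalent to the single relation
\[
A^{-1}(AB)^{g+1}=(AB)^gA .
\]
I would obtain this relation from $J^2=-I$: substituting $J=(-1)^gA^{-1}(AB)^{g+1}$ gives $A^{-1}(AB)^{g+1}A^{-1}(AB)^{g+1}=-I=(AB)^{2g+1}$, and multiplying on the left first by $A$ and then by $(AB)^{-(g+1)}$, using the centrality of $(AB)^{2g+1}$, cancels down to exactly $A^{-1}(AB)^{g+1}=(AB)^gA$. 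Applying $\rho$ turns this into $T_{\tilde{A_g}}^{-1}(T_{\tilde{A_g}}T_{\tilde{B_g}})^{g+1}=(T_{\tilde{A_g}}T_{\tilde{B_g}})^gT_{\tilde{A_g}}$, which is the second assertion after multiplying through by $\iota_g^g$.

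Since everything reduces to a two-line computation in $\tilde{\Gamma}_{2g+1}$, I do not expect a genuine obstacle here; the only steps needing care are confirming $\rho(-I)=\iota_g$ and the exponent arithmetic that collapses (\ref{J:def}) to $(-1)^gA^{-1}(AB)^{g+1}$. As a consistency check, and as a route that stays inside $\Mod(\Sigma_g)$, the first identity can also be deduced from the second once the latter is known: multiplying the two expressions $T_J=\iota_g^gT_{\tilde{A_g}}^{-1}(T_{\tilde{A_g}}T_{\tilde{B_g}})^{g+1}$ and $T_J=\iota_g^g(T_{\tilde{A_g}}T_{\tilde{B_g}})^gT_{\tilde{A_g}}$ and using that $\iota_g$ is central with $\iota_g^2=1$ collapses $T_J^2$ to $(T_{\tilde{A_g}}T_{\tilde{B_g}})^{2g+1}=\iota_g$ by (\ref{eq:kernel}).
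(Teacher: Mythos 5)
Your proof is correct and follows essentially the route the paper intends: the corollary is stated without proof precisely because it is the transport, through the homomorphism $\rho$ of Theorem~\ref{Thm:Tj}, of the $\tilde{\Gamma}_{2g+1}$-identities $J^2=-I$ and $J=(-1)^gA^{-1}(AB)^{g+1}$ (from (\ref{J:def}) with $q=2g+1$), using $\rho(-I)=\iota_g$ via (\ref{eq:kernel}). Your algebraic derivation of $A^{-1}(AB)^{g+1}=(AB)^gA$ from $J^2=-I$ and centrality is sound (it is the Hecke-group shadow of the relation (\ref{rel0}) the paper proves by braid relations), and your exponent arithmetic and consistency check are all accurate.
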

  \begin{proof}
    From Theorem \ref{Thm:Tj}, it suffices to prove $J^2=-I$, and $J=(-1)^g(A_{2g+1}B_{2g+1})^g A_{2g+1}$ in $\tilde{\Gamma}_{2g+1}$. The second relation follows from  $((A_{2g+1}B_{2g+1})^g A_{2g+1})^2=-I=(A_{2g+1}B_{2g+1})^{2g+1}$.
  \end{proof}
\subsection{Geometric description of $T_J$.}

Now we give a geometric description of the element $T_J$. Consider the Heegaard splitting represented by Figure \ref{fig:Hsplitting}, and denote the corresponding mapping class by $S$, $S$ maps curves $c_i$ to $c_{i+1}$ for $1\leq i\leq 2g$ and the image of $c_0,c_{2g+1}$ under $S$ is shown in the Figure \ref{fig:Hsplitting}. In fact, One can check directly $S=\prod\limits_{i=1}^{2g+1}T_{c_i}$, which is a root of center in $\mathcal{B}_{2g+2}$. We denote the horizontal rotation in Figure \ref{fig:u} by $u$, It is clear $u\in \SMod(\Sigma_g)$.

 \begin{lem} \label{Lem:Tj}
 We have  $T_J=\iota^g_guS$ in $\Mod(\Sigma_g)$.
 \end{lem}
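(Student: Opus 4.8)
The plan is to start from the algebraic formula already available and reduce the lemma to one clean identity. By Corollary \ref{Cor:relation} we have $T_J = \iota_g^g (T_{\tilde A_g} T_{\tilde B_g})^g T_{\tilde A_g}$, and $\iota_g$ is a central involution of $\SMod(\Sigma_g)$: it commutes with every $T_{c_i}$ and squares to the identity. Hence the asserted equality $T_J = \iota_g^g uS$ is equivalent to
\[
(T_{\tilde A_g} T_{\tilde B_g})^g T_{\tilde A_g} = uS ,
\]
and the whole proof is devoted to this identity, i.e.\ to recognizing the alternating multitwist word on the left as the product $uS$ of the horizontal rotation $u$ and the Heegaard shift $S = \prod_{i=1}^{2g+1} T_{c_i}$.

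Both sides lie in $\SMod(\Sigma_g)$, so I would prove the identity by the Alexander method: it suffices to check that the two mapping classes induce the same action on a filling collection of isotopy classes, for which the chain $\{c_0, c_1, \ldots, c_{2g+1}\}$ is convenient. The right-hand side is easy to track geometrically, since $S$ advances the chain by $S(c_i) = c_{i+1}$ (with the images of $c_0$ and $c_{2g+1}$ read off from Figure \ref{fig:Hsplitting}) and $u$ is the explicit horizontal rotation of Figure \ref{fig:u}. For the left-hand side I would use the Dehn-twist formulas of Proposition \ref{DT_property} together with the conjugacy of Lemma \ref{lem:rel1}, which rewrites $T_{\tilde A_g} T_{\tilde B_g}$ as a conjugate of the consecutive product $T_{A_g} T_{B_g}$, and the sliding relation (\ref{rel1}); these let one telescope the $g$-th power into a word whose action on the chain matches that of $S$. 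Conceptually the two sides are two descriptions of the same quarter-turn symmetry of Thurston's flat surface attached to $J = \left(\begin{smallmatrix} 0 & -1 \\ 1 & 0\end{smallmatrix}\right)$, which is a useful sanity check on the bookkeeping.

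An efficient alternative for the bulk of the computation is to pass to the braid group: by the Birman--Hilden theorem the identity, read modulo $\langle \iota_g \rangle$, becomes a relation in $\mathcal{B}_{2g+2}$ under $T_{c_i}\leftrightarrow \sigma_i$, namely $\big((\sigma_1\sigma_3\cdots\sigma_{2g-1})(\sigma_2\sigma_4\cdots\sigma_{2g})\big)^g (\sigma_1\sigma_3\cdots\sigma_{2g-1}) = \bar{u}\,\sigma_1\sigma_2\cdots\sigma_{2g+1}$, which I would verify by induction on $g$ using only the braid and far-commutativity relations. The main obstacle lies exactly at the seam of these two viewpoints: the Birman--Hilden correspondence pins the identity down only up to the central $\iota_g$, and the generators $\sigma_i$ do not see the curves $c_0$ and $c_{2g+1}$. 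The delicate step is therefore to compute the action of $u$ (and to settle the parity of $\iota_g^g$) on precisely these two curves, so as to certify the exact equality $(T_{\tilde A_g} T_{\tilde B_g})^g T_{\tilde A_g} = uS$ rather than mere equality modulo $\iota_g$.
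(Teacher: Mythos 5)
Your opening reduction is sound: by Corollary \ref{Cor:relation} and the centrality of $\iota_g$, the lemma is indeed equivalent to $(T_{\tilde{A_g}}T_{\tilde{B_g}})^g T_{\tilde{A_g}} = uS$, and your braid-theoretic engine genuinely works for the interior of the chain --- in $\mathcal{B}_{2g+1}$ the word $\bigl(\sigma_1\sigma_3\cdots\sigma_{2g-1}\,\sigma_2\sigma_4\cdots\sigma_{2g}\bigr)^g\sigma_1\sigma_3\cdots\sigma_{2g-1}$ is the Garside half-twist, so conjugation by it sends $\sigma_i\mapsto\sigma_{2g+1-i}$, i.e.\ the left-hand side maps $c_i\mapsto c_{2g+1-i}$ for $1\le i\le 2g$, matching $uS$. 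Note, however, that this inverts the paper's logic: what you call a ``sanity check'' (the quarter-turn symmetry of Thurston's flat surface) is the paper's actual proof device. The paper first observes $|c_i|=|c_{2g+1-i}|$ from the explicit Perron--Frobenius eigenvector, invokes Veech's theorem to identify $T_J$ with the affine rotation-by-$\frac{\pi}{2}$ of the flat structure (Figure \ref{fig:J}), and only then compares curve actions; the combinatorial telescoping you propose never appears.

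The genuine gap sits exactly at the step you flag as delicate, and you mislocate the difficulty there. The action of $u$ on $c_0$ and $c_{2g+1}$ is the \emph{easy} side: $u$ is an explicit rotation, and $S$ is tracked by Figure \ref{fig:Hsplitting}. What your plan supplies no method for is the action of the long twist word $(T_{\tilde{A_g}}T_{\tilde{B_g}})^g T_{\tilde{A_g}}$ on $c_0$: the Garside/Birman--Hilden calculus only controls conjugates of the chain twists $T_{c_1},\dots,T_{c_{2g+1}}$ (so $c_{2g+1}$ is in fact visible via $\sigma_{2g+1}$, contrary to your remark --- the truly invisible curve is $c_0$), and in any case the Birman--Hilden correspondence determines symmetric classes only modulo $\langle\iota_g\rangle$. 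The same ambiguity already infects your main route: agreement on the \emph{unoriented} isotopy classes $c_0,\dots,c_{2g+1}$ only shows the discrepancy commutes with all the generating twists, hence is central --- conclusive for $g\ge 3$ (trivial center), but for $g=1,2$ one must additionally compare actions on \emph{oriented} curves, which is precisely how the paper closes the case distinction. So without the Veech identification of $T_J$ as the flat quarter-turn (which is what renders the images of $c_0$ and $c_{2g+1}$ computable in Figure \ref{fig:J}), or some substitute computation of the word's action on $c_0$, your argument certifies the identity only up to $\iota_g$ and does not yet prove the lemma as stated.
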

  
  \begin{proof}
 It suffices to show the actions of these two mapping classes agree on the simple closed curves $c_i$ ($0 \leq i\leq 2g+1$) with orientations. This can be done by directly working on the surface and comparing the two actions. While the action of $\iota^g_guS$ is straightforward, the action of $T_J$ can be more complicated.  To address this, we utilize the flat structure defined in \ref{kernal} and leverage the fact that the linear action of $T_J$ is faithful (Theorem \ref{Thm:Tj}) and generally easier to describe with only a few exceptions.
 
  Directly calculation shows $|c_i|=|c_{2g+1-i}| (1\leq i\leq 2g)$, in fact we have  \cite{GHJ89} $$
  \begin{aligned}
 &v=\{sin(\frac{1}{2g+1}\pi),
  sin(\frac{3}{2g+1}\pi),\cdots,sin(\frac{2g-1}{2g+1}\pi)\}\ ,\\ &v'=\{sin(\frac{2g-1}{2g+1}\pi),sin(\frac{2g-3}{2g+1}\pi),\cdots,sin(\frac{1}{2g+1}\pi)\}\ . 
  \end{aligned}$$  Hence the corresponding flat structure has a rotation-by-$\frac{\pi}{2}$ symmetry. By the result of Veech \cite{Veech89} discussed in the previous subsection, or see \cite[Section~5]{MTflat02} and Theorem \ref{Thm:Tj}, $T_J$ is isotopic to the mapping class described in Figure \ref{fig:J} (here we give the proof for $g=3$, but the proof works for any genus similarly). Then one can check geometrically how $T_J$ moves the set of simple closed curves $c_i$ for $0\leq i\leq 2g+1$ (Figure \ref{fig:surface})  and compare with the action of $\iota^g_guS$ (It is clear how $\iota^g_guS$ acts geometrically).  If they agree on $c_i$ for all $0\leq i\leq 2g+1$  (regard as the isotopy class), then $\iota^g_guST_J^{-1}$ will fix all $c_i$, hence, by Proposition \ref{DT_property} (b), commute with all the Dehn twists $T_{c_i}$  generating $\Mod(\Sigma_g)$. The result for $g\geq 3$ now follows from the fact that the center of $\Mod(\Sigma_g)$ is trivial when $g\geq 3$. When $g=1,2$, the center is generated by the hyperelliptic involution $\iota_g$, so it suffices to check their actions agree on some curve with orientations. One can see the action on $c_0$ and $c_{2g+1}$ are the only non-obvious cases to check. For other curves, they are parallel to the edges of the cell decomposition, hence easy to see two mapping classes agree on $c_i(1<i\leq 2g)$ with orientations. We prove the case for $c_0$ and $c_{2g+1}$ in Figure \ref{fig:J}. We leave it to the reader to compare the curves in Figure \ref{fig:J} on the original surfaces.     
  \end{proof}
  
 \begin{figure}[h]
\centering
\includegraphics[width=0.7\textwidth]{{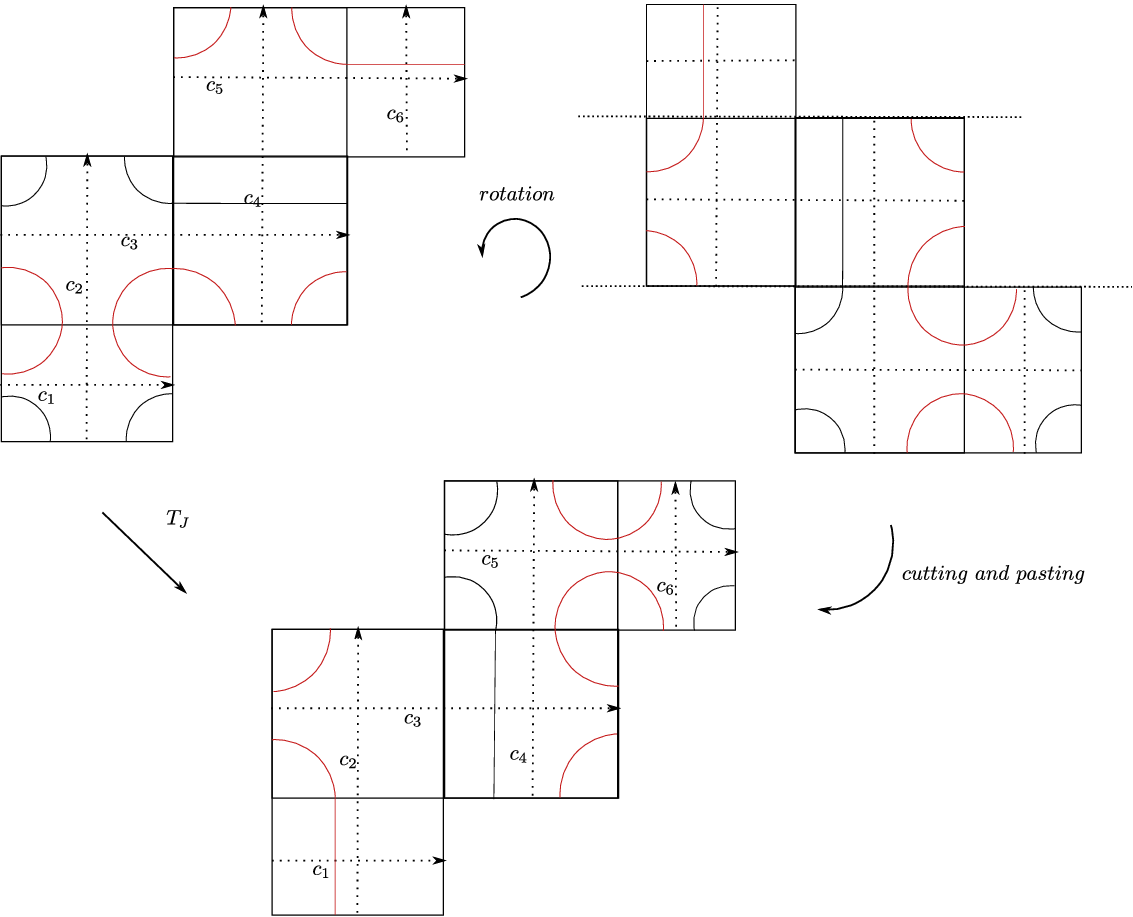}}
\caption{Action of $T_J$ on the curve $c_0$(red) and $c_{2g+1}$(black)}
\label{fig:J}
\end{figure}

 \section{Topological quantum field theory}

\subsection{Notations}
 The standard references are \cite{BHMV1,BHMV2,Roberts94},
 Let integer $p\geq 3$, $r=p-2$. We denote the color set by $I_r$. $I_r=\{0,1,\cdots,r\}$, when $r$ is even, and $I_r=\{0,2,\cdots,r-1\}$, when $r$ is odd. Let $A$ be a primitive $4p$-th root of unity when $r$ is even, and a primitive $2p$-th root of unity when $r$ is odd. For integer $i$, let $\Delta_i=(-1)^{i}[i+1]$, where $[n]=\frac{A^{2n}-A^{-2n}}{A^2-A^{-2}}$ is the quantum integer, and $\theta_i=(-1)^iA^{i(i-2)}$. We also let $\mathcal{P}_r^{\pm}=\sum_{i\in I_r}\theta_i^{\pm}\Delta_i^2$, $D_r=\sqrt{\sum_{i\in I_r}\Delta_i^2}$ and $\kappa_r=\frac{\mathcal{P}_r^+}{D_r}=\sqrt{\frac{\mathcal{P}_r^+}{\mathcal{P}_r^-}}$. $H_g$ is the genus $g$ handlebody. 

\begin{defn}
 Given a compact closed oriented three-manifold, the \textbf{skein space} $\mathcal{S}(M)$ is the vector space spanned by all the isotopy classes of framed links in $M$ modulo Kaffuman bracket relations.\\
 \begin{center}
      \includegraphics[scale=0.8]{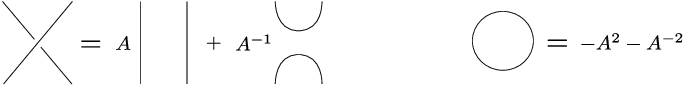}\\
 \end{center}

\end{defn}

In particular, $\mathcal{S}(S^3)=\mathbb{C}$.    

Given a framed link and an integer $r$. One can color the curves using Jones-Wenzl idempotents $f_i$ ($0\leq i\leq r$) \cite{Jones83, Wenzl87}. Let $e_i\in \mathcal{S}(H_1) $ denote the single curve winding once around the longitude, colored with $i$-th Jones-Wenzl idempotent. Let $\Omega_r=\sum_{i\in I_r}\Delta_ie_i\in \mathcal{S}(H_1)$. We use $L(\Omega_r)$ to denote the link obtained by replacing each component of $L$ by $\Omega_r$, and  $<L>_r$ to denote the number we get by resolving the framed colored link $L$ in $S^3$.

 \subsection{Invariants}
 
 Let $M_L$ be a three-dimensional manifold obtained by doing surgery along a framed link $L$ in $S^3$, we use $Z_r(M,L')$ to denote the Reshetikhin-Turaev (RT) invariant of $M$  at level $r$ with a framed colored link $L'\subset M$, we have ($\varsigma(L)$ is the signature of the linking matrix of $L$, and $m$ is the number of components of $L$)
 
 \begin{equation} \label{eq:Invat}
 Z_r(M_L,L')=(\mathcal{P}_r^{-})^{\varsigma(L)}D^{-\varsigma(L)-m-1}<L(\Omega_r)\cup L'>_r\ .
 \end{equation}
 
In particular, $Z_r(S^3)=D_r^{-1}$.
 
 \subsection{Vector space}
 The TQFT vector space of $\Sigma_g$ is constructed as the quotient of $\mathcal{S}(H_g)$ by the left kernel of any sesquilinear form induced by gluing of handlebodies \cite[Prop.~1.9]{BHMV2}. It is a finite-dimensional vector space, and we will briefly recall the explicit basis constructed in \cite{BHMV2}, also see \cite{Lickorish93} for the pure skein construction.  
 
 \begin{defn}
 Given a trivalent vertex with edges colored by $a,b,c$ from the set $I_r$. We say the vertex is \textbf{r-admissible} if the coloring satisfies:
 \begin{align*}
    & a+b+c=0\  (mod\  2)\ , \\
    & |a-b|\leq c\leq a+b\ ,\\
    & a+b+c\leq 2r\ .
 \end{align*}
 \end{defn}
 
 \begin{defn}
 An \textbf{r-admissible trivalent graph} is a labeled trivalent graph with all vertices r-admissible.
 \end{defn}

 \begin{defn}\label{Def:Basis}
  Let $\Sigma_g$ be a (closed) surface bounding the handlebody $H_g$ and  $\Gamma$ be a trivalent graph inside $H_g$ to which $H_g$ retracts. The \textbf{ TQFT vector space at level r} is spanned by all the r-admissible trivalent graphs with $\Gamma$ being the underlying graph, and all colors are from $I_r$. Figure \ref{fig:Basis} shows one possible underlying graph, admissibly coloring the graph produces one basis. We denote the basis vectors by $u_{\sigma}$, where $\sigma$ is a function from the set of edges to $I_r$. We also denote the handlebody with a basis by $H_g^{\sigma}$, and we denote the TQFT vector space by $V_r(\Sigma_g)$. it corresponds to $SU(2)$ theory when $r$ is even, and $SO(3)$ theory when $r$ is odd.
\end{defn}

Moreover we let $d_r(g):=\dim(V_r(\Sigma_g))$, the dimension is given by the formula \cite[Corollary~1.16]{BHMV2}
\begin{equation}\label{Eq:DimV}
d_r(g)=
\begin{cases}
(\frac{r+2}{2})^{g-1}\sum_{j=1}^{r+1}\left(\sin\frac{2\pi j}{2r+4}\right)^{2-2g},& \quad \text{if } r \text{ is even}\,;\\ (\frac{r+2}{4})^{g-1}\sum_{j=1}^{\frac{r+1}{2}}\left(\sin\frac{2\pi j}{r+2}\right)^{2-2g},& \quad \text{if } r \text{ is odd}\,.
\end{cases}
\end{equation}

 \begin{figure}[h]
\centering
\includegraphics[width=0.5\textwidth]{{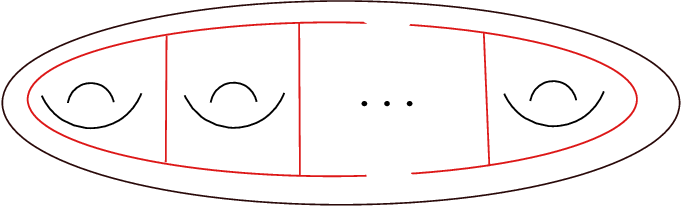}}
\caption{The underlying trivalent graph}
\label{fig:Basis}
\end{figure}
 
 One can relate different basis by doing $F$-moves locally showing in Figure \ref{fig:Fmoves}, where $F_{i,j,n}^{k,l,m}$ is the $6j$ symbol $\Sym{k}{l}{m}{i}{j}{n}$, we also denote the tetrahedron coefficient by $\Tet{*}{*}{*}{*}{*}{*}$, one can find explicit formulas in, for example, \cite{MV94}.
 \begin{figure}[h]
\centering
\includegraphics[width=0.5\textwidth]{{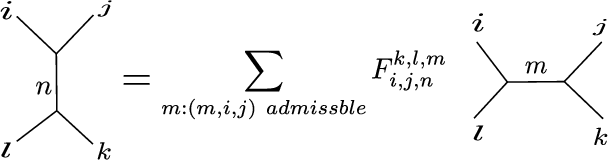}}
\caption{The F-move}
\label{fig:Fmoves}
\end{figure}\\

 For the more general setting, one can see, for example \cite{Tur10}, where TQFT is constructed from modular categories, the coloring corresponds to the simple objects in the category and the edges are morphisms. In the SU(2) and SO(3) case, the corresponding modular categories here are the TLJ-category (Temperle-Lieb-Jones category) \cite{Tur10,Wang10}, simple objects here are Jones-Wenzl projections \cite{Jones83,Wenzl87}, which have nice diagrammatic descriptions \cite{Wang10}.

 \subsection{Projective action of mapping class group}

Let $f\in \Mod(\Sigma_g)$, denote the corresponding mapping cylinder by $M_f=\Sigma_g\times [0,\frac{1}{2}]\cup_{(x,\frac{1}{2})\sim (f(x),\frac{1}{2})} \Sigma_g\times [\frac{1}{2},1]$, fix a basis for $V_r(\Sigma_g)$ as in Definition \ref{Def:Basis}, then we have a bilinear form $(\ ,\ )_{f}$ given by  $(u_{\sigma},u_{\mu})_f=Z_r(H_g^{\sigma}\cup_{id} M_{f}\cup_{id} \bar{H}_g^{\mu})$. Where $\bar{H}_g$ denotes $H_g$ with opposite orientation (hence the framing of curves inside $H_g$ will also be reversed).  It is proven in \cite{BHMV2} the bilinear form $(\ ,\ )_f$ is nondegenerate (since the vector space is constructed by quotient out the kernel), and the form $(\ ,\ )_r:=(\ ,\ )_{id}$ induced by identity mapping class is hermitian, and the basis defined in Definition \ref{Def:Basis} forms an orthogonal basis. Moreover we have \cite[Theorem~4.11]{BHMV2}:
\begin{equation}\label{eq:Renomal}
   (u_{\sigma},u_{\sigma})_r=D_r^{g-1}\frac{\prod_v\mathcal{T}_v}{\prod_{e}\Delta_{\sigma(e)}} 
\end{equation}

where $\mathcal{T}_v=\Delta_{i,j,k}$ given by evaluation of following diagram ($i,j,k$ are colors of the edges intersecting at $v$).\\

 \begin{figure}[h]
\centering
\includegraphics[width=0.3\textwidth]{{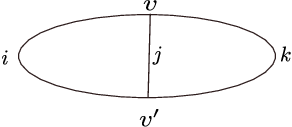}}
\caption{Theta diagram}
\label{fig:Theta}
\end{figure}

In particular, let $u_0$ denote the basis vectors with zero coloring, one has
\begin{equation}\label{eq:zeronorm}
   (u_0,u_0)_r=D_r^{g-1}. 
\end{equation}
Now the linear action can be computed by the bilinear form:  $Z_r(M_f)_{\sigma,\mu}=\frac{(u_{\sigma},u_{\mu})_f}{(u_{\mu},u_{\mu})_r}$, and we have 

\begin{equation}\label{eq:Mappingcyl}
(u_{\sigma},u_{\mu})_{f}=(Z_r(M_f)u_{\sigma},u_{\mu})_r.    
\end{equation}

The representation is projective since the signatures don't behave well with respect to the gluing see, for example \cite[Section~IV]{Tur10} and \cite[Lemma~2.8]{MR95central}.

When we pick special $A$, for example, $A=\pm ie^{\pm\frac{2\pi i}{4p}}$. The bilinear form $(\ ,\ )_r$ gives an inner product on $V_p(\Sigma_g)$. After some normalizations, our $Z_r(M_f)$ will be unitary, hence it gives a unitary projective representation of
$\Mod(\Sigma_g)$. We will simply use $Z_r(f)$ to denote $Z_r(M_f)$.\\
It is in general very hard to compute directly, for example see \cite{BWrep} for the explicit formula for the set of Dehn twists generating the mapping class group, but for some special mapping classes it is easy to write down the matrix.
When $f=T_{\gamma}$ is the right (left) Dehn twist along a curve $\gamma$, then $M_f$ can be presented by surgery on the curve $\gamma\times \{\frac{1}{2}\}\subset \Sigma_g\times I$,  which is $\mp1$ framed relative to the surface $\Sigma_g \times \{\frac{1}{2}\}$ (denoted by $\gamma^{\mp}$). If $\gamma$ bounds a disk in $H_g$, for example curves $\{c_0,c_1,c_3,\cdots,c_{2g+1}\}$, it is easy to resolve the diagrams and compute the invariant using Kirby calculus. One has 
\begin{equation}\label{oddTw0} 
   Z_r(T_{\gamma})u_{\sigma}=\theta_{\sigma(e_{\gamma})}u_{\sigma}.
\end{equation}
Where $e_{\gamma}$ is the edge transverse to the disk bounded by $\gamma$ (if such edge exists in the underlying graph for the basis).

When $f$ corresponds to a Heegaard splitting of the $S^3$, from the above discussion and the Formula (\ref{eq:Invat}), We have
$$
Z_r(S^3,L)=Z_r(S^3)<L>_r=D^{-1}<L>_r.
$$
Hence the entry of the $Z_r(M_f)$ can be computed by evaluating some tangle diagrams in $S^3$, and this is how we compute $Z_r(T_J)$. The diagram presentation for $Z_r(T_J)$ can be obtained easily from  Lemma \ref{Lem:Tj}, see Figure \ref{fig:TJdiagram} for the case when $g=2$.

There are some useful skein identities, for example in \cite{KL94,MV94}, we list some we need here.
\begin{figure}[h]
\centering
\includegraphics[width=0.35\textwidth]{{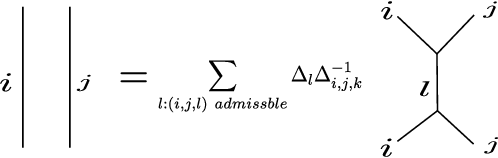}}
\caption{Partition of identity}
\label{fig:parofid}
\end{figure}

\begin{figure}[h]
\centering
\includegraphics[width=0.8\textwidth]{{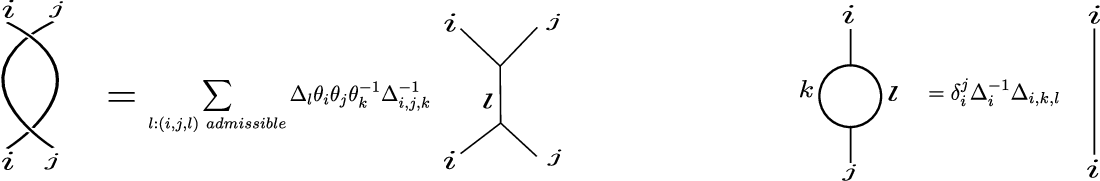}}
\caption{Two skein identities}
\label{fig:skeinidentity}
\end{figure}

Now we will briefly describe two projective actions defined in \cite{Roberts94}, also see \cite{MR95central}. One is geometric, the other is skein theoretic, we denote them by $\phi_1$ and $\phi_2$ respectively. They give the same projective representation but slightly different central extensions \cite{MR95central}, also see \cite{GManomly}.

Fix a Heegaard splitting of $S^3$ corresponding to some mapping class, for example the mapping class $T_J$ as defined in Theorem \ref{Thm:Tj} (also see Lemma \ref{Lem:Tj}), we have $S^3=H_g\cup_{T_J} \bar{H}_g=H_g\cup_{\Sigma_g\times \{0\}}(\Sigma_g\times I)\cup_{\Sigma_g\times \{1\}}\bar{H}'_g$, then $V_r(\Sigma_g)$ can be viewed as the quotient of $\mathcal{S}(H_g)$ by the kernel of the form $(\ ,\ )_{T_J}$. There are natural left and right actions of $\mathcal{S}(\Sigma_g\times I)$ on $V_r(\Sigma_g)$ by pushing the curves in $\Sigma_g\times I$ into $H_g$ and $H'_g$, and $\mathcal{S}(\Sigma_g\times I)$ is itself an algebra. Moreover it is a $*$-algebra, where the $*$ structure is induced by the map $id\times(-1)$ on $\Sigma\times I$. In particular it reverses framings relative to the surface, we use $\bar{\gamma}$ to denote the $\gamma$ under the $*$ operation. We also denote the left action by $\Add:\mathcal{S}(\Sigma_g\times I)\to \End(V_r(\Sigma_g))$, one has 
\begin{equation}
(\Add(\gamma)u_{\sigma},u_{\mu})_r=(u_{\sigma},\Add(\bar{\gamma})u_{\mu})_r.    
\end{equation} 
Now since $\Mod(\Sigma_g)$ acts naturally on $\Sigma_g\times I$, it acts on $\mathcal{S}(\Sigma_g\times I)$. It is proved in \cite{Roberts94} that $\Add$ is surjective, and moreover the kernel of $\Add$ is preserved by the action of $\Mod(\Sigma_g)$. Therefore $\Mod(\Sigma_g)$ induces automorphisms of $ \End(V_r(\Sigma_g))$, it is inner since $V_r(\Sigma_g)$ is finite dimensional. As a result, for any $f\in \Mod(\Sigma_g)$, there exists $\Phi(f)\in \End(V_r(\Sigma_g))$ such that for any $\gamma\in \mathcal{S}(\Sigma_g\times I)$ we have
\begin{equation}\label{eq:inner}
\Add(f(\gamma))=\Phi(f)\Add(\gamma)(\Phi(f))^{-1}.    
\end{equation}
Note $\Phi(f)$ is only well defined up to a constant, hence gives a projective representation. It is not hard to see $Z_r(T_J)$ satisfies (\ref{eq:inner}) for any $\gamma\in \mathcal{S}(\Sigma_g\times I)$, indeed one observe 
\begin{equation*}
(\Add(\gamma)u_{\sigma},u_{\mu})_{T_{J}}=(u_{\sigma},\Add(\overline{T_J(\gamma)})u_{\mu})_{T_{J}},
\end{equation*}
hence by (\ref{eq:Mappingcyl}), we have for any $u_{\sigma},u_{\mu}\in V_r(\Sigma_g)$,
\begin{equation}\label{eq:keyone}
\begin{aligned}
(Z_r(T_J)\Add(\gamma)u_{\sigma},u_{\mu})_r&=(Z_r(T_J)u_{\sigma},\Add(\overline{T_J({\gamma})})u_{\mu})_r\ ,\\
&=(\Add(T_J(\gamma))Z_r(T_J)u_{\sigma},u_{\mu})_r\ .
\end{aligned}    
\end{equation}
Now we will describe two actions. 

\textit{Geometric action} is defined on mapping classes which can be extended in $H_g$ or $H'_g$, we denote such mapping classes by $K,K'$ respectively. Such mapping classes directly act on the diagrams inside handlebodies, we denote the action by $\tilde{f}$.
If $f\in K$, one define $\phi_1(u_{\sigma})=\tilde{f}(u_{\sigma})$. If $f\in K'$, define $\phi_1(f)$ by 
\begin{equation}\label{eq:Geoact}
 (\phi_1(f)u_{\sigma},u_{\mu})_{T_J}=(u_{\sigma},\tilde{f}^{-1}(u_{\mu}))_{T_J}.   
\end{equation}
One can check the action $\phi_1$ satisfies (\ref{eq:inner}) \cite{Roberts94}, and since $K,K'$ generates $\Mod(\Sigma_g)$, as a projective representation we have $\phi_1=\Phi$, and in particular, we have
\begin{equation}\label{eq:keytwo}
Z_r(T_J)=c_1\phi_1(T_J)  
\end{equation} 
for some nonzero constant $c_1$ (which depends on the choice of $K\cup K'$ word representing $T_J$).
Now for Dehn twists along curves $ \{c_0,c_1,c_3,\cdots,c_{2g+1}\}$ which are in $K$. They are diagonal matrices given by 
\begin{equation}\label{oddTw1}
\phi_1(T_{\gamma})u_{\sigma}=\theta_{\sigma(e_{\gamma})}u_{\sigma}.   
\end{equation}
For Dehn twists along curves $\{c_2,\cdots,c_{2g}\}$ which are in $K'$, see Figure \ref{fig:Hsplitting} and Lemma \ref{Lem:Tj}. From (\ref{eq:Geoact}), (\ref{oddTw1}) and similar argument as in (\ref{eq:keyone}), we have for $1\leq i\leq g$
\begin{equation}\label{evenTw1}
\phi_1(T_{c_{2i}})=Z_r(T_J)\phi_1(T_{c_{2g+1-2i}})Z_r(T_J)^{-1}.
\end{equation}

\textit{Skein theoretic action} is defined on the set of all the Dehn twists, denoted by $\mathcal{D}$ , it is motivated by the surgery presentation of mapping cylinder corresponding to the Dehn twists as discussed above. Similarly, let $T_{\gamma}$ be the right (left) Dehn twist along a curve $\gamma$, then the action $\phi_2(T_{\gamma})$ is given by first cabling the curve $\gamma^{\mp}\subset\Sigma_g\times I$ by the skein element $\kappa^{\pm}_r\frac{\Omega_r}{D}$ in $\mathcal{S}(H_1)$, and then pushing it back in the handlebody $H_g$, namely $\phi_2(T_{\gamma})=\Add(\gamma^{\mp}(\kappa^{\pm}_r\frac{\Omega_r}{D}))$. Now if $\gamma$ bounds a disk in $H_g$, for example curves $\{c_0,c_1,c_3,\cdots,c_{2g+1}\}$, we have similarly
\begin{equation}\label{oddTw2} 
  \phi_2(T_{\gamma})u_{\sigma}=\theta_{\sigma(e_{\gamma})}u_{\sigma},
\end{equation}

If $\gamma$ does not bound a disk, for example curves $\{c_2,c_4,\cdots,c_{2g}\}$, one can make use of the Proposition \ref{DT_property} $(a)$, so that $\gamma$ can be map to the curves that bound a disk. For example the mapping class $T_J$, we have $T_J(c_i)=c_{2g+1-i}$ for $1\leq i \leq 2g$. Hence by (\ref{eq:inner}), we have for $1 \leq i\leq g$, 
\begin{equation}\label{evenTw2}
\phi_2(T_{c_{2i}})=Z_r(T_J)\phi_2(T_{c_{2g+1-2i}})Z_r(T_J)^{-1}=\phi_1(T_J)\phi_2(T_{c_{2g+1-2i}})\phi_1(T_J)^{-1}.    
\end{equation}

Compare equation (\ref{oddTw1}) and (\ref{oddTw2}), also from (\ref{evenTw2}), we have $\phi_1=\phi_2$ as projective representations. In particular 

\begin{equation}\label{eq:key3}
 Z_r(T_J)=c_2\phi_2(T_J).  
\end{equation}
for some nonzero constant $c_2$ (which depends on the choice of $\mathcal{D}$ word representing $T_J$).

It is not hard to see $\phi_1$ and $\phi_2$ agree on the word in $\mathcal{D}\cap (K\cup K')$ from (\ref{oddTw1})-(\ref{evenTw2}) and they are homomorphisms when restricted to $K(\cap \mathcal{D})$ or $K'(\cap \mathcal{D})$. Moreover they are unitary when the hermitian form is positive definite, see \cite{Roberts94, MR95central} for more details.
\begin{rem}
The equality in (\ref{evenTw2}) allows one to compare the projective factors between two actions, see \cite{MR95central}. Thanks to (\ref{eq:keytwo})  (\ref{eq:key3}), we can extend their result to the mapping class $T_J$, so that $c_1,c_2$ can also be determined similarly as in the proof of \cite[Lem.~2.8]{MR95central}.
\end{rem}

\begin{lem}\label{lem:cvalue}
 We have $Z_r(T_J)^2=\phi_1(\iota_g)$.
\end{lem}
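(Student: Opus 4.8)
The plan is to reduce the asserted operator identity to a single scalar and then pin that scalar to $1$. First I would invoke Corollary \ref{Cor:relation}, which gives $T_J^2=\iota_g$ in $\Mod(\Sigma_g)$, and observe that the hyperelliptic involution $\iota_g$ extends over the handlebody $H_g$ (it is the rotation about the long axis through the handles in Figure \ref{fig:surface}, which preserves the Heegaard surface and carries $H_g$ to itself). Thus $\iota_g\in K$, so $\phi_1(\iota_g)=\tilde{\iota}_g$ is an honestly defined operator rather than a merely projective one. Since $\phi_1$ restricted to $K$ is a genuine homomorphism and $\iota_g^2=1$, the operator $\phi_1(\iota_g)$ is an involution; moreover $\phi_1(\iota_g)u_0=u_0$, because the zero coloring $u_0$ is the empty skein and is carried to itself by any homeomorphism of $H_g$.

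Next I would show that $Z_r(T_J)^2$ and $\phi_1(\iota_g)$ differ by at most a scalar. By the intertwining relation (\ref{eq:keyone}), $Z_r(T_J)$ conjugates $\Add(\gamma)$ to $\Add(T_J(\gamma))$; squaring, $Z_r(T_J)^2$ conjugates $\Add(\gamma)$ to $\Add(T_J^2(\gamma))=\Add(\iota_g(\gamma))$. The geometric operator $\phi_1(\iota_g)$ satisfies the same relation (\ref{eq:inner}) for $\iota_g$. Because $\Add$ is surjective onto $\End(V_r(\Sigma_g))$ (Roberts' theorem), any operator intertwining the $\Add$-action twisted by a fixed mapping class is unique up to a nonzero scalar; hence $Z_r(T_J)^2=\lambda\,\phi_1(\iota_g)$ for some $\lambda\in\mathbb{C}^{\times}$.

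It then remains to prove $\lambda=1$, which I would do by evaluating both sides on $u_0$. Since $\phi_1(\iota_g)u_0=u_0$, the previous step already forces $Z_r(T_J)^2u_0=\lambda\,u_0$, so $u_0$ is an eigenvector and $\lambda$ is exactly the $(0,0)$-coefficient of $Z_r(T_J)^2u_0$. I would compute this from the Heegaard/mapping-cylinder description of $T_J$ in Lemma \ref{Lem:Tj}: the entries $(u_0,u_\mu)_{T_J}$ are Reshetikhin--Turaev evaluations of colored graphs in $S^3$, and the empty coloring in one handlebody collapses these drastically, so that applying $Z_r(T_J)$ twice and normalizing via (\ref{eq:zeronorm}), namely $(u_0,u_0)_r=D_r^{g-1}$, identifies the coefficient as $1$.

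The main obstacle is precisely this last scalar computation: the intertwining argument is essentially formal, but excluding a spurious phase requires controlling the framing and signature corrections that render $Z_r$ only projective. As a safeguard I would keep in reserve a second route, using (\ref{eq:keytwo}) to write $Z_r(T_J)=c_1\phi_1(T_J)$, together with the analogue $\phi_1(T_J)^2=\lambda'\,\phi_1(\iota_g)$ obtained from the same uniqueness argument, and then determining $c_1$ and the cocycle factor $\lambda'$ by the signature bookkeeping indicated in the remark following (\ref{eq:key3}), exactly as in \cite[Lem.~2.8]{MR95central}; matching the two central extensions shows $c_1^2\lambda'=1$ and hence again $\lambda=1$.
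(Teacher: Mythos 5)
Your reduction is sound and in fact mirrors the paper's (implicit) first step: using (\ref{eq:keyone}) twice together with Roberts' surjectivity of $\Add$, the operator $\phi_1(\iota_g)^{-1}Z_r(T_J)^2$ commutes with all of $\End(V_r(\Sigma_g))$, hence $Z_r(T_J)^2=\lambda\,\phi_1(\iota_g)$; and since $\phi_1(\iota_g)u_0=u_0$, the lemma reduces to showing that the diagonal coefficient $c=((Z_r(T_J))^2u_0,u_0)_r/(u_0,u_0)_r$ equals $1$. But that scalar computation is exactly where your proposal stops, and it is the entire content of the lemma beyond the formal Schur-type argument. Your claim that the entries ``collapse drastically'' under the empty coloring, so that normalizing via (\ref{eq:zeronorm}) ``identifies the coefficient as $1$,'' is not correct as stated: expanding over the orthogonal basis,
\[
c=\sum_{\sigma}\frac{(u_0,u_{\sigma})_{T_J}(u_{\sigma},u_0)_{T_J}}{(u_{\sigma},u_{\sigma})_r(u_0,u_0)_r}\ ,
\]
and by Lemma \ref{Lem:Tj} together with the second identity of Figure \ref{fig:skeinidentity} one finds $(u_0,u_{\sigma})_{T_J}=(u_{\sigma},u_0)_{T_J}=D_r^{-1}\langle u_{\sigma}\rangle_r$, which is nonzero for a large set of colorings $\sigma$ (precisely those with $\phi_1(\iota_g)u_{\sigma}=u_{\sigma}$), not just $\sigma=0$. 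Using (\ref{eq:Renomal}) this leaves $c=D_r^{-2g}\sum_{\sigma}\prod_{e\in E'}\Delta_{\sigma(e)}$, where $E'$ is the set of $\iota_g$-invariant edges, and a priori this could be any scalar --- the anomaly you yourself flag as ``the main obstacle'' is never excluded.

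The paper closes exactly this hole with a skein-theoretic double count absent from your proposal: evaluate $g$ disjoint unknots, each colored by $\Omega_r$, in two ways. Direct evaluation gives $D_r^{2g}$; applying the partition of identity (Figure \ref{fig:parofid}) $g-1$ times and resolving with the same skein identity gives $\sum_{\sigma}\prod_{e\in E'}\Delta_{\sigma(e)}$. Hence the sum equals $D_r^{2g}$ and $c=1$. Your fallback route through (\ref{eq:keytwo}) and the central-extension bookkeeping of \cite[Lem.~2.8]{MR95central} is likewise only asserted --- the relation $c_1^2\lambda'=1$ is not derived, and $c_1$ depends on the chosen word representing $T_J$, so pinning it down requires comparable work (indeed, in the paper the direction of dependence is reversed: Lemma \ref{lem:cvalue} is an input to the anomaly computation in Lemma \ref{lem:anomly}, not a consequence of it). In short: your skeleton agrees with the paper's, but the decisive step --- the identity $\sum_{\sigma}\prod_{e\in E'}\Delta_{\sigma(e)}=D_r^{2g}$ --- is missing, so the proof as proposed is incomplete.
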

\begin{proof}
Let $c$ be the $(1,1)$-entry of the matrix $(Z_r(T_J))^2$.
Since $T_J^2=\iota_g$, and
\[
(\phi_1(\iota_g)u_0,u_0)_r=(u_0,u_0)_r\ ,
\]
it suffices to prove $c=1$. We have
\[
c=\sum_{\sigma}\frac{(u_0,u_{\sigma})_{T_J}(u_{\sigma},u_0)_{T_J}}{(u_{\sigma},u_{\sigma})_r(u_0,u_0)_r}.
\]
From Lemma \ref{Lem:Tj}, and the standard calculations using second identity in Figure \ref{fig:skeinidentity}, we have $(u_{\sigma},u_0)_{T_J}=(u_0,u_{\sigma})_{T_J}=D_r^{-1}<u_{\sigma}>_r$. It is nonzero if and only if $\phi_1(\iota_g)u_{\sigma}=u_{\sigma}$. Therefore by (\ref{eq:Renomal}), we have
\[
\begin{aligned}
c&=D_r^{-2}\sum_{\sigma:\ \phi_1(\iota_g)u_{\sigma}=u_{\sigma}}\frac{<u_{\sigma}>^2_r}{(u_{\sigma},u_{\sigma})_r(u_0,u_0)_r}\ ,\\
&=D_r^{-2g}\sum_{\sigma:\ \phi_1(\iota_g)u_{\sigma}=u_{\sigma}}\prod_{e\in E'}\Delta_{\sigma(e)}\ ,
\end{aligned}
\]
where $E'$ is the set of edges (in Figure \ref{fig:Basis}) that invariant under the action of $\iota_g$. Now consider $g$ unknots placed next to each other and all colored with $\Omega_r$, there are two ways to evaluate it. Direct evaluation gives $D_r^{2g}$, the other way is to apply the partition of identity (Figure \ref{fig:parofid}) $g-1$ times, one gets a linear combination of $u_{\sigma}$. Resolving it using the second identity in Figure \ref{fig:skeinidentity} as before, one gets number $\prod_{e\in E'}\Delta_{\sigma(e)}$. Therefore $c=1$.      
\end{proof}
\begin{lem}\label{lem:anomly}
 Let $w_1$ be a $K$ word, $w_2$ be a $\mathcal{D}$ word, and $w_2w_1$ ($w_1w_2$ if $w_1$ is a $K'$ word) is a word representing the mapping class $T_J$, we have
\begin{equation}
\phi_2(w_2)\phi_1(w_1)=\kappa_r^{\tilde{\varsigma}(w_2)+e(w_2)}Z_r(T_J)\ ,
\end{equation}
where $\tilde{\varsigma}(w)=\varsigma(L_w)$ is the signature of link $L_w$ defined in \cite[Sec.~2.3]{MR95central}, and $e(w_2)$ is the exponent sum of the word $w_2$. 
\end{lem}

\begin{proof}
We have
\begin{equation*}
\begin{aligned}
(\phi_2(w_2)\phi_1(w_1)u_0,u_0)_{T_J}&=(\phi_2(w_2)\tilde{w}_1(u_0),u_0)_{T_J},\\
&=(\phi_2(w_2)u_0,u_0)_{T_J},\\
&=\kappa_r^{\tilde{\varsigma}(w_2)+e(w_2)}Z_r(M_{L_{w_2}}).
\end{aligned}
\end{equation*}

Now since the closed manifold $M_{L_{w_2}}$ is homeomorphic to the manifold 
\[
\begin{aligned}
H_g\cup_{id}M_{T_Jw^{-1}_1}\cup_{id}\bar{H}'_g &\simeq H_g\cup_{id}M_{T_Jw^{-1}_1}\cup _{id}M_{T_J}\cup_{id}\bar{H}_g\ , &\\
&\simeq H_g\cup_{id}M_{\iota_g}\cup_{id}\bar{H}_g\ , & (w^{-1}_1\in K,\ and\  T_J^2=\iota_g)\\
&\simeq H_g\cup_{id}\Sigma_g\times I\cup_{id}\bar{H}_g\ . &(\iota_g\in K\cap K')
\end{aligned}
\] 
We have $Z_r(M_{L_{w_2}})=(u_0,u_0)_r$, and the lemma follows from (\ref{eq:Mappingcyl}) and Lemma \ref{lem:cvalue}
\[
(Z_r(T_J)u_0,u_0)_{T_J}=((Z_r(T_J))^2u_0,u_0)_r=(u_0,u_0)_r\ .
\]
\end{proof}

\begin{prop}\label{Heckerel}
Let $w$ be the $\mathcal{D}$ word $(T_{\tilde{A_g}}T_{\tilde{B_g}})^{g}T_{\tilde{A_g}}$, we have
\begin{equation}
(\phi_1(T_{\tilde{A_g}})Z_r(T_J))^{2g+1}=\kappa_r^{\tilde{\varsigma}(w)+g(2g+1)}\phi_1(\iota_g).
\end{equation}
\end{prop}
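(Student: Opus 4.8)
The plan is to reduce the left-hand side to the product $\phi_2(w)\,Z_r(T_J)^{2g+1}$ by a purely formal operator computation, and then feed this into the anomaly Lemma \ref{lem:anomly} together with Lemma \ref{lem:cvalue}. Write $a=\phi_1(T_{\tilde{A_g}})$ and $J=Z_r(T_J)$, so that the element in question is $X=aJ$. I would first record the conjugation relations. Since $a=\phi_1(T_{\tilde{A_g}})=\phi_2(T_{\tilde{A_g}})$ is the diagonal operator of (\ref{oddTw1})--(\ref{oddTw2}), equation (\ref{evenTw2}), applied factorwise and reindexed (the $T_{c_{2i}}$ are disjoint, so $\phi_2$ is multiplicative on them), gives $b:=JaJ^{-1}=\phi_2(T_{\tilde{B_g}})$. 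Because conjugation by $J$ interchanges $a$ and $b$, the element $J^2=\phi_1(\iota_g)$ of Lemma \ref{lem:cvalue} commutes with both: indeed $J^2aJ^{-2}=JbJ^{-1}=a$. Note this is consistent with the exact mapping-class relation $(T_{\tilde{A_g}}T_J)^{2g+1}=\iota_g$ coming from the Hecke relation $s^2=(ts)^q$ of Theorem \ref{Heckegroup}.

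Next I would establish by induction the operator identity $X^n=W_nJ^n$, where $W_n$ is the alternating word $abab\cdots$ of length $n$ beginning with $a$. The inductive step uses $J^naJ^{-n}=a$ for $n$ even and $=b$ for $n$ odd, which follows from the swap $a\leftrightarrow b$ together with the centrality of $J^2$; crucially no projective scalar intervenes at any stage. At $n=2g+1$ this yields $X^{2g+1}=(ab)^g a\,J^{2g+1}$. Since $(ab)^g a=(\phi_2(T_{\tilde{A_g}})\phi_2(T_{\tilde{B_g}}))^g\phi_2(T_{\tilde{A_g}})$ is by definition $\phi_2(w)$ for the $\mathcal{D}$-word $w=(T_{\tilde{A_g}}T_{\tilde{B_g}})^gT_{\tilde{A_g}}$, I obtain $X^{2g+1}=\phi_2(w)\,Z_r(T_J)^{2g+1}$.

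Finally I would invoke Lemma \ref{lem:anomly} with $w_2=w$ and $w_1=\iota_g^g$: by Corollary \ref{Cor:relation} one has $T_J=w\iota_g^g$, and $\iota_g\in K$, while $w$ consists of $g(2g+1)$ positive twists so $e(w)=g(2g+1)$. The lemma then gives $\phi_2(w)\,\phi_1(\iota_g)^g=\kappa_r^{\tilde{\varsigma}(w)+g(2g+1)}Z_r(T_J)$. Substituting $\phi_1(\iota_g)=Z_r(T_J)^2$ (Lemma \ref{lem:cvalue}) and using that $\phi_1$ is an honest homomorphism on the cyclic group generated by $\iota_g\in K$ turns this into $\phi_2(w)=\kappa_r^{\tilde{\varsigma}(w)+g(2g+1)}Z_r(T_J)^{1-2g}$. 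Plugging into the previous display collapses the powers of $Z_r(T_J)$ to $Z_r(T_J)^{(1-2g)+(2g+1)}=Z_r(T_J)^2=\phi_1(\iota_g)$, yielding exactly the claimed identity.

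The main obstacle I anticipate is the bookkeeping needed to keep every relation \emph{exact} rather than merely projective. One must verify that $\phi_1$ and $\phi_2$ coincide on the disk-bounding twists and on the $K'$-word $T_{\tilde{B_g}}$, that (\ref{evenTw2}) is a genuine scalar-free conjugation (so that the swap $a\leftrightarrow b$ holds on the nose and $J^2$ is truly central in the subalgebra generated by $a,b$), and that $\phi_1(\iota_g^g)=\phi_1(\iota_g)^g$. Once these identifications are secured, and the single appearance of $\kappa_r^{\tilde{\varsigma}(w)+g(2g+1)}$ is correctly matched against the anomaly of Lemma \ref{lem:anomly}, the remaining computation is entirely formal.
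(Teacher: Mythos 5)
Your proposal is correct and takes essentially the same route as the paper's proof: the conjugation relation coming from (\ref{evenTw2}) collapses $(\phi_1(T_{\tilde{A_g}})Z_r(T_J))^{2g+1}$ to $\phi_1(\iota_g^g)\phi_2(w)Z_r(T_J)$, and the scalar is then extracted exactly as you do, from Lemma \ref{lem:anomly} applied to $T_J=\iota_g^g w$ together with Lemma \ref{lem:cvalue}. The only (harmless) deviation is your justification that $\phi_1(\iota_g)=Z_r(T_J)^2$ commutes with $a$ and $b$: the paper argues via $\iota_g\in K\cap K'$ by an anomaly-style argument as in Lemma \ref{lem:anomly}, whereas you deduce centrality of $J^2$ formally from the two swap relations --- just note that the second swap $JbJ^{-1}=a$ is not literally (\ref{evenTw2}) but the companion instance of (\ref{eq:inner}) for the even-index curves (using $T_J(c_{2i})=c_{2g+1-2i}$), which is immediate from (\ref{eq:keyone}).
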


\begin{proof}
From Lemma \ref{lem:cvalue}, we have $Z_r(T_J)=\phi_1(\iota_g)(Z_r(T_J))^{-1}$. Hence from (\ref{evenTw1}) or (\ref{evenTw2}), we have $Z_r(T_J)\phi_i(T_{\tilde{A_g}})Z_r(T_J)=\phi_i(T_{\tilde{B_g}})\phi_1(\iota_g)$ for $i=1,2$ ($\phi_1,\phi_2$ agrees on those Dehn twists). Now since all the mapping classes related are in $\SMod(\Sigma_g)$, they commute with $\iota_g$. Moreover since $\iota_g\in K\cap K'$, by similar but simpler argument as in Lemma \ref{lem:anomly}, we have in particular $\phi_1(\iota_g)$ commutes with $\phi_i(T_{\tilde{A_g}})$ and $\phi_i(T_{\tilde{B_g}})$ for $i=1,2$. Therefore we have 
\[
(\phi_1(T_{\tilde{A_g}})Z_r(T_J))^{2g+1}=\phi_1(\iota_g^g)\phi_2(w)Z_r(T_J).
\]
Since $T_J=\iota_g^g (T_{\tilde{A_g}}T_{\tilde{B_g}})^{g}T_{\tilde{A_g}}$ (Corollary \ref{Cor:relation}), the second equality follows from Lemma \ref{lem:cvalue} and \ref{lem:anomly}.

\end{proof}
Now we are ready for the main theorem.
\begin{thm}\label{mainThm}
We have projective (unitary) representations $h_{r}$ of $\tilde{\Gamma}_{2g+1}$ from the TQFT vector spaces $V_r(\Sigma_g)$, and  when $g\leq 2$ the representations factor through $\Gamma_{2g+1}$. In particular. When $g=1$, $h_{r}(A_1),h_{r}(J)$ gives the modular data of TLJ modular categories, when $g=2$, we have 
$\mathcal{J}_r:=h_{r}(J)$ and 
a diagonal matrix $\mathcal{T}_r:=h_r(A_3)$, satisfying the relations: 
\begin{equation}
\begin{aligned}
\mathcal{J}_r^2&=I\ ,\\
(\mathcal{T}_r\mathcal{J}_r)^5&=(\frac{\mathcal{P}_r^+}{\mathcal{P}_r^-})^2I\ .
\end{aligned}
\end{equation}
\end{thm}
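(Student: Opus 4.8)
The plan is to define $h_r$ as the composition of the inclusion $\rho\colon \tilde{\Gamma}_{2g+1}\hookrightarrow \SMod(\Sigma_g)\subset\Mod(\Sigma_g)$ of Theorem \ref{Thm:Tj} with the projective representation $Z_r\colon \Mod(\Sigma_g)\to \PGL(V_r(\Sigma_g))$ coming from the TQFT; unitarity is inherited from the positive-definiteness of $(\ ,\ )_r$ for a suitable choice of $A$. Under this definition the generators map to $h_r(A_{2g+1})=Z_r(T_{\tilde{A_g}})$, $h_r(B_{2g+1})=Z_r(T_{\tilde{B_g}})$ and $h_r(J)=Z_r(T_J)$. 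First I would record that $\mathcal{T}_r:=h_r(A_{2g+1})$ is diagonal: since $T_{\tilde{A_g}}=T_{c_1}T_{c_3}\cdots T_{c_{2g-1}}$ is a product of Dehn twists about curves bounding disks in $H_g$, equation (\ref{oddTw1}) shows each factor acts diagonally on the basis $\{u_\sigma\}$ with eigenvalues among the $\theta_i$, so their product is diagonal.

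I would treat the genus-$1$ claim and the factoring statement together through the hyperelliptic involution. For $g=1$ the space $V_r(\Sigma_1)$ has basis indexed by a single color, $Z_r(T_{c_1})$ is the diagonal matrix of the $\theta_i$ (the modular $T$-matrix) and $Z_r(T_J)$ is the change of basis obtained by evaluating the colored Hopf link, which is precisely the modular $S$-matrix of the TLJ category; this identifies the images of the diagonal generator and of $J$ with the modular data. For the factoring statement, recall that $\rho$ sends the central element $-I=(A_{2g+1}B_{2g+1})^{2g+1}$ of $\tilde{\Gamma}_{2g+1}$ to $\iota_g$ by (\ref{eq:kernel}), so $h_r$ descends to $\Gamma_{2g+1}=\tilde{\Gamma}_{2g+1}/\{\pm I\}$ exactly when $h_r(-I)=Z_r(\iota_g)$ is scalar. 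By Lemma \ref{lem:cvalue} we have $h_r(-I)=Z_r(T_J)^2=\phi_1(\iota_g)$, so it suffices to show $\phi_1(\iota_g)$ is scalar for $g\le 2$. Here I would use that for $g\le 2$ the hyperelliptic involution is central in $\Mod(\Sigma_g)$ and, with the spine of Figure \ref{fig:Basis}, fixes each curve $c_i$ and acts trivially on the coloring data, forcing $\phi_1(\iota_g)$ to be scalar; combined with the computation in Lemma \ref{lem:cvalue} that its $(1,1)$-entry equals $1$, this gives $\phi_1(\iota_g)=I$. That this fails for $g\ge 3$, where $\iota_g$ is no longer central and genuinely permutes basis vectors, is exactly why the factoring is restricted to $g\le 2$.

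The two genus-$2$ relations then follow from the machinery already assembled. The first, $\mathcal{J}_r^2=I$, is immediate from $\mathcal{J}_r^2=Z_r(T_J)^2=\phi_1(\iota_2)=I$. For the second I would apply Proposition \ref{Heckerel} with $g=2$, which gives
\begin{equation*}
(\mathcal{T}_r\mathcal{J}_r)^5=\bigl(\phi_1(T_{\tilde{A_2}})Z_r(T_J)\bigr)^5=\kappa_r^{\tilde{\varsigma}(w)+10}\,\phi_1(\iota_2)=\kappa_r^{\tilde{\varsigma}(w)+10}\,I,
\end{equation*}
where $w=(T_{\tilde{A_2}}T_{\tilde{B_2}})^2T_{\tilde{A_2}}$ as in Corollary \ref{Cor:relation}. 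Since $\kappa_r=\sqrt{\mathcal{P}_r^+/\mathcal{P}_r^-}$ by definition, the target scalar is $(\mathcal{P}_r^+/\mathcal{P}_r^-)^2=\kappa_r^4$, so it remains to show the signature satisfies $\tilde{\varsigma}(w)=-6$, making the exponent equal to $4$. This I would verify by writing down the linking matrix of the surgery link $L_w$ presenting the mapping cylinder of $w$ and computing its signature directly.

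The main obstacle is concentrated in the last two paragraphs and is twofold: rigorously establishing that $\phi_1(\iota_g)$ is scalar, hence $I$, for $g\le 2$ from the geometry of the involution, and the explicit signature computation $\tilde{\varsigma}(w)=-6$ that converts the abstract anomaly $\kappa_r^{\tilde{\varsigma}(w)+10}$ of Proposition \ref{Heckerel} into the clean factor $(\mathcal{P}_r^+/\mathcal{P}_r^-)^2$. The first is conceptually delicate because centrality alone does not force a scalar without irreducibility, so it genuinely uses the explicit action on the basis; the second is the routine-but-unavoidable computation in which every piece of the projective ambiguity must be tracked.
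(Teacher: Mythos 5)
Your proposal is correct and takes essentially the same route as the paper's own proof: you define $h_r$ through the embedding of Theorem \ref{Thm:Tj}, read off the diagonal $\mathcal{T}_r$ from (\ref{oddTw1}), get $\mathcal{J}_r^2=\phi_1(\iota_2)=I$ from Lemma \ref{lem:cvalue}, and derive $(\mathcal{T}_r\mathcal{J}_r)^5=\kappa_r^{\tilde{\varsigma}(w)+10}I=(\mathcal{P}_r^+/\mathcal{P}_r^-)^2I$ from Proposition \ref{Heckerel} with the signature value $\tilde{\varsigma}(w)=-6$, exactly as the paper does. The two points you flag as obstacles --- that $\phi_1(\iota_g)=I$ for $g\leq 2$ (via the explicit action on the basis, not centrality alone) and the computation $\tilde{\varsigma}(w)=-6$ --- are precisely the steps the paper treats as direct verifications, so there is no gap.
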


\begin{proof}
Let $h_r(A_{2g+1}):=\phi_1(T_{\tilde{A_g}})$ and $h_r(J):=Z_r(T_J)$. From Lemma \ref{lem:cvalue}, Proposition \ref{Heckerel} and Theorem \ref{Thm:Tj}, it gives a projective representation of $\tilde{\Gamma}_{2g+1}$. One can get a unitary representation by specializing $A$ to some appropriate root of unity, for example, $A=\pm ie^{\pm\frac{2\pi i}{4p}}$. Normalizing the basis using (\ref{eq:Renomal}), under normalized basis, $\phi_1$ and $\phi_2$ are unitary (\cite{Roberts94}). In particular, up to a scalar, $Z_r(T_J)$ is unitary. Since its first row and column are all real numbers, the same computation as in the proof of Lemma $\ref{lem:cvalue}$ shows that $Z_r(T_J)$ is indeed unitary.   
For $g=1,2$, it's straight forward to see $\phi_1(\iota_g)=I$. The special cases in the theorem now follow from the computation of the signatures, when $g=1$, $\tilde{\varsigma}(w)=-2$, and when $g=2$, $\tilde{\varsigma}(w)=-6$.\\
\end{proof}

\begin{rem}
Theorem \ref{mainThm} and Proposition \ref{Heckerel} imply $h_r$ can be lifted to a linear representation $\tilde{h}_r$ by multiplying a root of unity $\varkappa$ in $\mathcal{T}_r$, provided $\varkappa^{2g+1}=\kappa_r^{\varsigma(w)+g(2g+1)}$.
\end{rem}

In the rest of this section, we will focus on the case where $g=2$, and we will give concrete calculations. We denote the basis in $V_r(\Sigma_2)$ by $u_{ijk}$, which means the graph in Figure \ref{fig:Basis} is colored by $i,j,k$ from left to right, and we give them the dictionary order. 
Let $\tilde{J}_{i_1j_1k_1,i_2j_2k_2}:=(u_{i_1j_1k_1},u_{i_2j_2k_2})_{T_J}$,
the matrix $\tilde{J}$ can be computed from evaluating the following diagrams.
\begin{figure}[h]
\centering
 \includegraphics[scale=0.8]{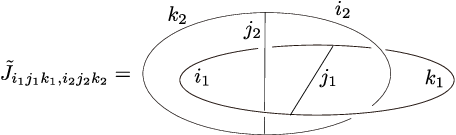}\\
 \caption{The diagram presentation for $Z_r(T_J)$ }
 \label{fig:TJdiagram}
 \end{figure}

Therefore it is easy to see $\tilde{J}$ is a symmetric real matrix. And we have
\begin{equation}
\begin{aligned}
\mathcal{J}_{i_1j_1k_1,i_2j_2k_2}&=\frac{\Delta_{i_2}\Delta_{j_2}\Delta_{k_2}}{D^2\Delta^2_{i_2j_2k_2}}\tilde{J}_{i_1j_1k_1,i_2j_2k_2}\ ,\\
\mathcal{T}_ru_{ijk}&=\theta_i\theta_j u_{i,j,k}\ .
\end{aligned}
\end{equation}
When the form $(\ ,\ )$ is positive definite one can normalize the basis, we have a unitary matrix

\begin{equation}
\mathcal{J}_{i_1j_1k_1,i_2j_2k_2}=\frac{\sqrt{\Delta_{i_1}\Delta_
{j_1}\Delta_{k_1}\Delta_{i_2}\Delta_{j_2}\Delta_{k_2}}}{D^2\Delta_{i_1,j_1,k_1}\Delta_{i_2,j_2,k_2}}\tilde{J}_{i_1j_1k_1,i_2j_2k_2}\ . 
\end{equation}
$\mathcal{T}_r$ is a diagonal matrix and the entries are all root of unities, hence always unitary.

Now we give a formula to evaluate entries of $\tilde{J}$

\begin{prop}
We have
\begin{equation}\label{eq:Jmatrix}
\tilde{J}_{i_1j_1k_1,i_2j_2k_2}=\sum^{r-2}_{l=0}\Delta_l^{-1}a^{j_1,i_2}_l\bar{a}^{k_2,i_1}_l\Tet{l}{i_2}{i_2}{j_2}{k_2}{k_2}\Tet{l}{j_1}{j_1}{k_1}{i_1}{i_1}\ .   
\end{equation}
where $a^{i,j}_k$ is the coefficient of the following recoupling formula:

\begin{center}
\includegraphics[width=0.3\textwidth]{{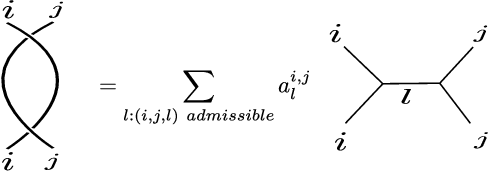}}    
\end{center}

and
\begin{equation}\label{eq:couplingcoeff}
    a^{i,j}_l=\sum_{k:(i,j,k)\
r-admissible}\Delta_k\theta_i\theta_j\theta_k^{-1}\Delta_{i,j,k}^{-1}\Sym{i}{j}{l}{j}{i}{k}\ .
\end{equation}
\end{prop}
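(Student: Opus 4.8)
The plan is to compute the matrix entry $\tilde{J}_{i_1j_1k_1,i_2j_2k_2}=(u_{i_1j_1k_1},u_{i_2j_2k_2})_{T_J}$ directly from its surgery presentation. By the discussion following Lemma~\ref{Lem:Tj} and the formula $Z_r(S^3,L)=D_r^{-1}\langle L\rangle_r$, this pairing is $D_r^{-1}$ times the Kauffman-bracket evaluation of the tangle diagram in Figure~\ref{fig:TJdiagram}, obtained by placing the basis graph $u_{i_1j_1k_1}$ in $H_g$, gluing via $T_J$ (whose geometric action is described in Lemma~\ref{Lem:Tj} as $\iota_g^g uS$), and capping off with the reversed graph $\overline{u_{i_2j_2k_2}}$ in $\bar{H}_g$. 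First I would carefully read off from Figure~\ref{fig:TJdiagram} the link obtained from this gluing: the effect of the rotation $u$ together with $S$ on the genus-$2$ trivalent graph produces a closed diagram in $S^3$ whose strands are colored by the six labels $i_1,j_1,k_1,i_2,j_2,k_2$ and linked in a specific way dictated by how $T_J$ interchanges the two halves of the surface.

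The heart of the computation is to resolve this closed diagram into a product of standard recoupling coefficients. My strategy is to apply the two skein identities in Figure~\ref{fig:skeinidentity} to break the linked configuration apart. The crucial step is the recoupling move pictured just after the proposition statement, which introduces a summation index $l$ running over admissible colors $0\leq l\leq r-2$ and extracts the coefficient $a^{i,j}_l$. This coefficient is itself evaluated by the recoupling formula~(\ref{eq:couplingcoeff}): I would verify that sliding one colored strand past another and re-expanding in the fusion basis yields precisely
\begin{equation*}
a^{i,j}_l=\sum_{k:(i,j,k)\ \text{admissible}}\Delta_k\theta_i\theta_j\theta_k^{-1}\Delta_{i,j,k}^{-1}\Sym{i}{j}{l}{j}{i}{k}\ ,
\end{equation*}
where the twist factors $\theta_i,\theta_j,\theta_k$ come from the framing changes introduced by the $\mp1$-framed surgery curves, and the $6j$-symbol $\Sym{i}{j}{l}{j}{i}{k}$ records the change of fusion tree. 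After applying this move to the two crossing regions of the diagram (contributing $a^{j_1,i_2}_l$ and its conjugate $\bar{a}^{k_2,i_1}_l$, the bar arising because the $\bar{H}_g$ side carries reversed framing and the pairing is sesquilinear), the remaining picture should decouple into two disjoint theta-like/tetrahedral networks. Evaluating these two networks gives the two tetrahedral coefficients $\Tet{l}{i_2}{i_2}{j_2}{k_2}{k_2}$ and $\Tet{l}{j_1}{j_1}{k_1}{i_1}{i_1}$, while the normalization from the $\Omega_r$-cabling and the closing of the $l$-colored loop contributes the prefactor $\Delta_l^{-1}$.

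The main obstacle I anticipate is bookkeeping: correctly tracking which of the six colors gets paired with which in each crossing, getting the orientation of every strand right so that the complex conjugate lands on the correct factor $\bar{a}^{k_2,i_1}_l$, and ensuring that all framing-induced twist factors $\theta$ are absorbed consistently into the $a^{i,j}_l$ coefficients rather than appearing as spurious extra factors. The symmetry of $\tilde{J}$ noted before the proposition (it is a symmetric real matrix, since the diagram in Figure~\ref{fig:TJdiagram} is evidently invariant under reflection) serves as a useful consistency check on the final formula~(\ref{eq:Jmatrix}). I would also sanity-check the result against the known genus-$1$ and $g=1$ specializations, where $\tilde{J}$ must reduce to the modular $S$-matrix of the TLJ category, and verify that the admissibility constraints on $(i_1,j_1,k_1)$ and $(i_2,j_2,k_2)$ are compatible with the admissibility conditions hidden inside each tetrahedral coefficient and inside $a^{i,j}_l$.
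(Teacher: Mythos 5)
Your proposal follows essentially the same route as the paper's proof: start from the diagram presentation of $\tilde{J}$ in Figure \ref{fig:TJdiagram}, resolve the two double crossings via the identities of Figure \ref{fig:skeinidentity} (the left one plus an $F$-move giving (\ref{eq:couplingcoeff}), with the conjugate $\bar{a}^{k_2,i_1}_l$ on the $\bar{H}_g$ side coming from the reversed framing), and then evaluate the remaining two tetrahedra joined along an $l$-colored edge, the shared edge accounting for the $\Delta_l^{-1}$. The one bookkeeping point to watch is your overall $D_r^{-1}$ prefactor from $Z_r(S^3,L)=D_r^{-1}\langle L\rangle_r$, which must be absorbed into the normalization convention of the diagrams in Figure \ref{fig:TJdiagram} for (\ref{eq:Jmatrix}) to hold exactly as stated.
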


\begin{proof}
Equation (\ref{eq:couplingcoeff}) follows from applying the identity on the left of Figure \ref{fig:skeinidentity} and a F-move. Now one can resolve two double crossings in the diagram presentation of $\tilde{J}$, and apply the identity on the right of Figure \ref{fig:skeinidentity}. The remaining diagram is two tetrahedrons connected along an edge, straightforward computations give us the Formula (\ref{eq:Jmatrix}).  
\end{proof}

\begin{rem}
 Similar diagrams (as in figure \ref{fig:TJdiagram}) also appear in \cite{LX19} as certain dualities of the Fourier transform. 
\end{rem}

Now we give the calculation result for Ising theory $r=2,A=ie^{\frac{\pi}{8}i}$ and Fibonacci theory $r=3,A=ie^{\frac{\pi}{10}i}$, which are done easily by hand and verified by using Maple software, see Appendix \ref{sec:maplecode}: 

$$ \mathcal{J}_2=\begin{pmatrix}
\frac{1}{4}&\frac{\sqrt{2}}{4}&\frac{1}{4}&\frac{\sqrt{2}}{4}&\frac{\sqrt{2}}{4}&\frac{\sqrt{2}}{4}&\frac{\sqrt{2}}{4}&\frac{1}{4}&\frac{\sqrt{2}}{4}&\frac{1}{4}\\
\frac{\sqrt{2}}{4}&\frac{1}{2}&\frac{\sqrt{2}}{4}&0&0&0&0&-\frac{\sqrt{2}}{4}   &-\frac{1}{2}&-\frac{\sqrt{2}}{4}  \\
\frac{1}{4}&\frac{\sqrt{2}}{4}&\frac{1}{4}&-\frac{\sqrt{2}}{4}&-\frac{\sqrt{2}}{4}&-\frac{\sqrt{2}}{4}&-\frac{\sqrt{2}}{4}&\frac{1}{4}&\frac{\sqrt{2}}{4}&\frac{1}{4} \\
\frac{\sqrt{2}}{4}&0&-\frac{\sqrt{2}}{4}&0&\frac{1}{2}&-\frac{1}{2}&0&-\frac{\sqrt{2}}{4}  &0&\frac{\sqrt{2}}{4}    \\
\frac{\sqrt{2}}{4}&0&-\frac{\sqrt{2}}{4}&\frac{1}{2}&0&0&-\frac{1}{2}&\frac{\sqrt{2}}{4} &0&-\frac{\sqrt{2}}{4}   \\
\frac{\sqrt{2}}{4}&0&-\frac{\sqrt{2}}{4}&-\frac{1}{2}&0&0&\frac{1}{2}&\frac{\sqrt{2}}{4}  &0&-\frac{\sqrt{2}}{4}  \\
\frac{\sqrt{2}}{4}&0&-\frac{\sqrt{2}}{4}&0 &-\frac{1}{2}&\frac{1}{2}&0&-\frac{\sqrt{2}}{4}&0&\frac{\sqrt{2}}{4}    \\
\frac{1}{4}&-\frac{\sqrt{2}}{4}&\frac{1}{4}&-\frac{\sqrt{2}}{4}&\frac{\sqrt{2}}{4}&\frac{\sqrt{2}}{4}&-\frac{\sqrt{2}}{4}&\frac{1}{4}&-\frac{\sqrt{2}}{4}&\frac{1}{4}\\
\frac{\sqrt{2}}{4}&-\frac{1}{2}&\frac{\sqrt{2}}{4}&0&0 &0&0&-\frac{\sqrt{2}}{4} &\frac{1}{2} &-\frac{\sqrt{2}}{4}\\ 
\frac{1}{4}&-\frac{\sqrt{2}}{4}&\frac{1}{4}&\frac{\sqrt{2}}{4}&-\frac{\sqrt{2}}{4}&-\frac{\sqrt{2}}{4}&\frac{\sqrt{2}}{4}&\frac{1}{4}&-\frac{\sqrt{2}}{4}&\frac{1}{4}
\end{pmatrix}$$\ 

$$
\mathcal{T}_2=\begin{pmatrix}
1&0 &0 &0 &0 &0 &0 &0 &0 &0 \\
0&e^{\frac{7\pi i}{8}}&0 &0 &0 &0 &0 &0 &0 &0 \\
0&0 &-1 &0 &0 &0 &0 &0 &0 &0 \\
0&0 &0 &e^{\frac{7\pi i}{8}} &0 &0 &0 &0 &0 &0 \\
0&0 &0 &0 &-e^{\frac{3\pi i}{4}} &0 &0 &0 &0 &0 \\
0&0 &0 &0 &0 &-e^{\frac{3\pi i}{4}} & 0&0 &0 &0 \\
0&0 &0 &0 &0 &0 &-e^{\frac{7\pi i}{8}} &0 &0 &0 \\
0&0 &0 &0 &0 &0 &0 &-1 &0 &0 \\
0&0 &0 &0 &0 &0 &0 &0 &-e^{\frac{7\pi i}{8}} &0 \\
0&0 &0 &0 &0 &0 &0 &0 &0 &1 \\
\end{pmatrix}\  
$$

$$
\mathcal{J}_3=\begin{pmatrix}
\frac{5-\sqrt{5}}{10}&\frac{\sqrt{5}}{5}&\frac{\sqrt{5}}{5}& \frac{\sqrt{5}}{5}&\frac{\sqrt{10(1+\sqrt{5})}}{10}\\
\frac{\sqrt{5}}{5}&\frac{5+\sqrt{5}}{10} & -\frac{5-\sqrt{5}}{10}& -\frac{5-\sqrt{5}}{10}&-\frac{\sqrt{10(\sqrt{5}-1)}}{10} \\
\frac{\sqrt{5}}{5}&-\frac{5-\sqrt{5}}{10} &-\frac{5-\sqrt{5}}{10} & \frac{5+\sqrt{5}}{10}& -\frac{\sqrt{10(\sqrt{5}-1)}}{10} \\
\frac{\sqrt{5}}{5}&-\frac{5-\sqrt{5}}{10} &\frac{5+\sqrt{5}}{10} &-\frac{5-\sqrt{5}}{10} & -\frac{\sqrt{10(\sqrt{5}-1)}}{10} \\
\frac{\sqrt{10(1+\sqrt{5})}}{10}& -\frac{\sqrt{10(\sqrt{5}-1)}}{10} &-\frac{\sqrt{10(\sqrt{5}-1)}}{10}  &-\frac{\sqrt{10(\sqrt{5}-1)}}{10}  & \frac{5-\sqrt{5}}{5}
\end{pmatrix}\ 
$$

$$
T_3=\begin{pmatrix}
1&0&0&0&0\\
0&e^{\frac{4}{5}\pi i}&0&0&0\\
0&0&e^{\frac{4}{5}\pi i}&0&0\\
0&0&0&e^{-\frac{2}{5}\pi i}&0\\
0&0&0&0&e^{-\frac{2}{5}\pi i}
\end{pmatrix}\ 
$$

In particular, we have
\begin{prop}
The group $\tilde{h}_3(\Gamma_5)$ is infinite.
\end{prop}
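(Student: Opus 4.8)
The plan is to use the \emph{Galois conjugation} criterion for infiniteness of a linear image. Since $g=2$, the representation $\tilde h_3$ is a representation of $\Gamma_5\cong\mathbb{Z}_2*\mathbb{Z}_5$, and by Theorem~\ref{mainThm} its generators act by the explicit $5\times 5$ matrices $\mathcal{J}_3=\tilde h_3(J)$ (an involution) and $\tilde h_3(A_5)$, a root‑of‑unity rescaling of the diagonal matrix $T_3$. All entries lie in a number field $K$ contained in a cyclotomic field with finitely many quadratic radicals adjoined, concretely $K\subset\mathbb{Q}(\zeta_5,\sqrt{10(1+\sqrt5)},\sqrt{10(\sqrt5-1)})$. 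The point is that $\dim V_3(\Sigma_2)=5$, so if some $w\in\Gamma_5$ had $\tilde h_3(w)$ of finite order then, $\tilde h_3$ being unitary, its eigenvalues would be roots of unity and $|\Tr\tilde h_3(w)|\le 5$; the content of the argument is that this bound persists after applying a field automorphism.

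First I would record the elementary lemma: if $\rho\colon G\to\GL_n(\overline{\mathbb{Q}})$ has entries in a number field and $\sigma$ is any embedding of its Galois closure, then every $g\in G$ of finite order satisfies $|\sigma(\Tr\rho(g))|\le n$. Indeed the eigenvalues $\lambda_1,\dots,\lambda_n$ of $\rho(g)$ are roots of unity, $\sigma$ permutes roots of unity, and $\sigma(\Tr\rho(g))=\sum_i\sigma(\lambda_i)$ is a sum of $n$ numbers of modulus one. Hence, to prove $\tilde h_3(\Gamma_5)$ infinite it suffices to exhibit a single word $w$ and one automorphism $\sigma$ with $|\sigma(\Tr\tilde h_3(w))|>5$.

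Next I would fix the automorphism $\sigma$ acting on $\mathbb{Q}(\zeta_5)$ by $\zeta_5\mapsto\zeta_5^2$, so that $\sqrt5\mapsto-\sqrt5$ and $\sqrt{10(1+\sqrt5)}\mapsto\sqrt{10(1-\sqrt5)}\in i\mathbb{R}$. Under $\sigma$ the matrix $\sigma(\mathcal{J}_3)$ remains an involution but acquires imaginary entries, hence is \emph{no longer unitary}, while $\sigma(\tilde h_3(A_5))$ stays diagonal unitary; thus the conjugated pair generates a non‑precompact subgroup of $\GL_5(\mathbb{C})$. I would then take $w$ to be a cyclically reduced word of syllable length at least two, for instance $w=J\,(A_5J)\,J\,(A_5J)^2$, which has infinite order already in $\mathbb{Z}_2*\mathbb{Z}_5$, and form $M=\sigma(\tilde h_3(w))$ from the explicit matrices $\mathcal{J}_3,T_3$ above. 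If $M$ has spectral radius $>1$, then $\Tr(M^N)=\sigma(\Tr\tilde h_3(w^N))$ is an unbounded sequence (the generating function $\sum_N\Tr(M^N)z^N=\sum_i(1-\lambda_iz)^{-1}$ has a pole of radius $<1$ with nonzero residue), so $|\sigma(\Tr\tilde h_3(w^N))|>5$ for some $N$, forcing $\tilde h_3(w^N)$, and hence $\tilde h_3(\Gamma_5)$, to be infinite.

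The main obstacle is the explicit arithmetic verification that $M$ really escapes the unit circle, i.e.\ that its characteristic polynomial over $\sigma(K)$ is not a product of cyclotomic factors. This is delicate precisely because both generators $\sigma(\mathcal{J}_3)$ (an involution) and $\sigma(\tilde h_3(A_5))$ (diagonal unitary) individually have all eigenvalues on the unit circle, so only the product detects the failure of unitarity; the computation is finite but genuinely numerical and is best carried out with the displayed $\mathcal{J}_3$ and $T_3$. One must also check that the scalar $\varkappa$ used in the remark to pass from $h_3$ to the linear lift $\tilde h_3$ is itself a root of unity, so that it leaves the eigenvalue‑on‑unit‑circle normalization—and therefore the trace bound—intact.
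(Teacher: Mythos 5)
Your overall strategy --- apply a Galois embedding to break unitarity and then violate the trace bound $|\sigma(\Tr\rho(g))|\le n$ for finite-order elements --- is legitimate, and it is in fact essentially the mechanism the paper uses for the cases $r=7,9,11,13$. But as a proof of \emph{this} proposition it has a genuine gap: the decisive fact is never established. You need some word $w$ and embedding $\sigma$ such that $\sigma(\tilde{h}_3(w))$ has spectral radius $>1$, and you explicitly defer this ("the main obstacle is the explicit arithmetic verification\dots"), offering a candidate $w=J(A_5J)J(A_5J)^2$ with no evidence attached. The intermediate claim that "the conjugated pair generates a non-precompact subgroup of $\GL_5(\mathbb{C})$" does not follow from $\sigma(\mathcal{J}_3)$ acquiring imaginary entries: an involution is always conjugate to a unitary matrix, and a pair of individually diagonalizable-with-unimodular-spectrum generators can perfectly well generate a bounded (conjugate-to-unitary) group. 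Boundedness is ruled out only by exhibiting an element whose eigenvalues leave the unit circle --- exactly the computation left undone. Worse, the data in the paper show the obvious first attempt fails at $r=3$: for the natural element $\mathcal{J}_3\mathcal{T}_3\mathcal{J}_3\mathcal{T}_3^{-1}$ the trace test gives $\Tr\approx 4.24 < 5 = d_3(2)$, which is presumably why the paper treats $r=3$ separately from $r=7,\dots,13$. So your plan, completed honestly, would require a genuine search over words, powers, and embeddings.

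The paper's own proof is both shorter and avoids Galois conjugation entirely: since the representation at $A=ie^{\frac{\pi}{10}i}$ is unitary, an element has infinite order as soon as a single eigenvalue fails to be a root of unity. The paper computes $h_3(A_5B_5^{-1})=\mathcal{J}_3\mathcal{T}_3\mathcal{J}_3\mathcal{T}_3^{-1}$ from the displayed $5\times 5$ matrices and checks that the eigenvalue $\frac{1}{4}\bigl(3-\sqrt{5}+i\sqrt{2(1+3\sqrt{5})}\bigr)$ has minimal polynomial $1-3x+3x^2-3x^3+x^4$, which is not cyclotomic; the passage from $h_3$ to $\tilde{h}_3$ is immediate because the projective factor is a root of unity (a point you did handle correctly). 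Your trace-of-powers generating-function lemma is fine as far as it goes, but to repair your proof you would have to replace the unverified spectral-radius assertion with an actual characteristic-polynomial computation --- at which point the paper's single eigenvalue/minimal-polynomial check on the unitary side is the more economical route.
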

\begin{proof}
Since the projective factor is a root of unity, it suffices to work with $h_r$. We compute the element $h_3(A_5B_5^{-1})=\mathcal{J}_3\mathcal{T}_3\mathcal{J}_3\mathcal{T}^{-1}_3$. It has an eigenvalue $ \frac{1}{4}(3-\sqrt{5}+i\sqrt{2(1+3\sqrt{5})})$, which is not a root of unity, since its minimal polynomial, $1-3x+3x^2-3x^3+x^4$, is not cyclotomic.
\end{proof}

Moreover since all the entries of matrices under unnormalized basis are in a cyclotomic field $\mathbb{Q}(A)$ (only even powers of $D_r$ and $\kappa_r$ appear when $g=2$), the Galois group $\Gal(\mathbb{Q}(A);\mathbb{Q})$ naturally acts on the representations. In particular, it preserves the property that the image is finite or not. Now let $A=e^{\frac{\pi i}{r}}$ for odd $r$, By computing the trace of the image of the Pseudo-Anosov mapping class ${\rho}(A_{2g+1}B_{2g+1}^{-1})$, we have
\[
\Tr(\mathcal{J}_r\mathcal{T}_r\mathcal{J}_r\mathcal{T}^{-1}_r)=\sum_{u_{i_1j_1k_1},u_{i_2j_2k_2}}\theta_{i_1}\theta_{j_1}\theta^{-1}_{i_2}\theta^{-1}_{j_2}\mathcal{J}_{i_1j_1k_1,i_2j_2k_2}\mathcal{J}_{i_2j_2k_2,i_1j_1k_1}\ ,
\]
and
\begin{prop}
The group $\tilde{h}_r(\Gamma_5)$ is also infinite for $r=7,9,11,13$.
\end{prop}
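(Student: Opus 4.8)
=== PROOF PROPOSAL ===

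The plan is to mimic exactly the strategy used in the preceding proposition for $r=3$, but now carried out at the four levels $r=7,9,11,13$, relying on the Galois-equivariance remark to control the arithmetic. First I would fix the specialization $A=e^{\pi i/r}$ for each odd $r$ in question, so that all matrix entries live in the cyclotomic field $\mathbb{Q}(A)$, and assemble the two matrices $\mathcal{J}_r$ and $\mathcal{T}_r$ explicitly from the formula \eqref{eq:Jmatrix} together with the evaluation of the coupling coefficients \eqref{eq:couplingcoeff} and the diagonal action $\mathcal{T}_r u_{ijk}=\theta_i\theta_j u_{ijk}$. The dimension $d_r(2)$ of $V_r(\Sigma_2)$ from \eqref{Eq:DimV} grows with $r$, so these are larger matrices than in the $r=3$ case, but the computation is still finite and, as the author notes, done with Maple.

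Next I would form the candidate infinite-order element $h_r(A_5B_5^{-1})=\mathcal{J}_r\mathcal{T}_r\mathcal{J}_r\mathcal{T}_r^{-1}$, the image of the Pseudo-Anosov mapping class $\rho(A_5 B_5^{-1})$. As in the $r=3$ proposition, since the projective factor $(\mathcal{P}_r^+/\mathcal{P}_r^-)^2$ is a root of unity it suffices to show that this genuine matrix product has \emph{infinite multiplicative order} in $\PGL$, and the cleanest route is to exhibit an eigenvalue that is not a root of unity. To do this I would compute the characteristic polynomial of $\mathcal{J}_r\mathcal{T}_r\mathcal{J}_r\mathcal{T}_r^{-1}$, extract an irreducible factor over $\mathbb{Q}$ (or over the relevant real subfield), and verify that this factor is \emph{not cyclotomic} — for instance by checking that its roots do not all lie on the unit circle, or that its coefficients fail the palindromic/cyclotomic constraints. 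The trace formula displayed just above the statement, $\Tr(\mathcal{J}_r\mathcal{T}_r\mathcal{J}_r\mathcal{T}_r^{-1})=\sum \theta_{i_1}\theta_{j_1}\theta_{i_2}^{-1}\theta_{j_2}^{-1}\mathcal{J}_{i_1j_1k_1,i_2j_2k_2}\mathcal{J}_{i_2j_2k_2,i_1j_1k_1}$, gives a convenient scalar invariant to inspect first: if even the trace is an algebraic number of degree $>1$ over $\mathbb{Q}$ whose Galois conjugates are incompatible with a finite-order (hence algebraic-integer-of-absolute-value-bounded) matrix, that already forces infinite order.

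The main obstacle will be the arithmetic bookkeeping at each level rather than any conceptual difficulty: one must correctly evaluate the tetrahedral ($6j$) symbols $\Tet{*}{*}{*}{*}{*}{*}$ and the $F$-symbols at the chosen root of unity, keep track of admissibility constraints on the colorings $(i,j,k)\in I_r^3$, and then certify that the resulting characteristic polynomial has a non-cyclotomic factor. The certification step is where care is needed: showing a polynomial is not cyclotomic is elementary (cyclotomic polynomials have all roots of modulus $1$ and are products of $\Phi_n$), but one must guard against an eigenvalue that is an algebraic unit of modulus $1$ yet still not a root of unity — so the safest check is to confirm directly that some eigenvalue has modulus $\neq 1$, which is automatic once the mapping class is genuinely Pseudo-Anosov (hyperbolic image in $\PSL_2(\mathbb{R})$) and the Galois action is invoked to rule out accidental collapse. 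I expect the proof to conclude, exactly parallel to the $r=3$ case, by displaying for each $r\in\{7,9,11,13\}$ an explicit eigenvalue of $\mathcal{J}_r\mathcal{T}_r\mathcal{J}_r\mathcal{T}_r^{-1}$ together with its minimal polynomial and the observation that the latter is not cyclotomic, whence $\tilde{h}_r(\Gamma_5)$ is infinite.
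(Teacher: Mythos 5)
Your overall plan --- work at the Galois conjugate $A=e^{\pi i/r}$, form $h_r(A_5B_5^{-1})=\mathcal{J}_r\mathcal{T}_r\mathcal{J}_r\mathcal{T}_r^{-1}$, and certify infinite order by a computation --- matches the paper's up to the certification step, but there the two part ways. The paper does \emph{not} compute eigenvalues or characteristic polynomials at these levels: it observes that it suffices to check $\Tr(\mathcal{J}_r\mathcal{T}_r\mathcal{J}_r\mathcal{T}_r^{-1})\geq d_r(2)$, since a finite-order matrix is diagonalizable with root-of-unity eigenvalues and hence has trace of modulus at most the dimension. Maple gives traces $32.16,\ 102.92,\ 332.49,\ 1084.12$ against dimensions $d_r(2)=30,\ 55,\ 91,\ 140$ for $r=7,9,11,13$. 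This is a one-number certificate per level, robust to floating-point error (one only needs the trace to clear the dimension), whereas your primary route requires exact symbolic characteristic polynomials of matrices of size up to $140$ over cyclotomic fields and a non-cyclotomicity certification for a factor --- valid in principle, mimicking the $r=3$ case, but far heavier, which is presumably why the paper abandons it beyond $r=3$. You gesture at the trace as a preliminary invariant but never state the clean criterion $|\Tr|>\dim$ that actually closes the argument.

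There is also a genuine error in your fallback reasoning: the claim that an eigenvalue of modulus $\neq 1$ is ``automatic once the mapping class is genuinely Pseudo-Anosov'' is false. The hyperbolicity of $\rho(A_5B_5^{-1})$ lives in Thurston's $\PSL_2(\mathbb{R})$-representation and does not transport to the TQFT matrices; at a unitary specialization of $A$ \emph{every} eigenvalue of the image lies on the unit circle regardless of Nielsen--Thurston type, and even mere infinite order for pseudo-Anosov classes is precisely the open AMU conjecture quoted in the paper's introduction, not a known fact one may invoke. Indeed, the only reason the paper's traces can exceed the dimension at all is that $A=e^{\pi i/r}$ is a \emph{non-unitary} Galois conjugate of the unitary theory --- the Hermitian form is indefinite there --- which is exactly why the Galois-equivariance remark (finiteness of the image is preserved under $\Gal(\mathbb{Q}(A);\mathbb{Q})$) is an essential ingredient rather than bookkeeping: the trace criterion would be vacuous at the unitary root of unity, and so would your modulus test.
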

\begin{proof}
It suffices to check  $\Tr(\mathcal{J}_r\mathcal{T}_r\mathcal{J}_r\mathcal{T}^{-1}_r)\geq d_r(2)$. Using Maple software we have the following numerical results. 
\begin{center}
\begin{tabular}{ ||c||c|c|c|c|c|c|c| } 
 \hline
 $r$ & 3 & 5 & 7 & 9 & 11 & 13\\ 
 \hline
 $d_r(g)$& 5 & 14 & 30  & 55 & 91 & 140\\ 
 \hline
 $\Tr$ & 4.24 & 10.54 & 32.16 & 102.92 & 332.49 & 1084.12\\ 
 \hline
\end{tabular}
\end{center}
We have $\Tr(\mathcal{J}_r\mathcal{T}_r\mathcal{J}_r\mathcal{T}^{-1}_r)\geq d_r(2)$ when $r=7,9,11,13$, which complete the proof.
\end{proof}
\section{Spin structure and reducibility}
The $\mathbb{Z}/2\mathbb{Z}$ spin structure on a closed surface $\Sigma_g$ of genus $g$ is cohomology class $\varphi\in H^1(UT\Sigma_g,\mathbb{Z}/2\mathbb{Z})$ which evaluates to one on the oriented fiber of the unit tangent bundle $UT\Sigma_g\to \Sigma_g$. In \cite{Johnson80}, Johnson built a one-to-one correspondence between the set of $\mathbb{Z}/2\mathbb{Z}$ spin structures on a Riemann surface $\Sigma_g$ with the set of $\mathbb{Z}/2\mathbb{Z}$ valued quadratic forms on $H_1(\Sigma_g,\mathbb{Z}/2\mathbb{Z})$ with associated intersection form on $\Sigma_g$ (i.e. $q(a+b)=q(a)+q(b)+a\cdot b$). Moreover the bijection intertwines the action of $\Mod(\Sigma_g)$ (induced by the obvious action of $Sp(2g,Z/2Z)$), and there are two orbits of the action depending on the parity of the spin structures, namely, the Arf invariant, which is an element in $\mathbb{Z}/2\mathbb{Z}$  defined by
\[Arf(q)=\sum_{i=1}^g q(x_i)q(y_i)\ ,\] 
which is independent of the choice of symplectic basis $x_i,\  y_i$  for $H_1(\Sigma_g,\mathbb{Z}/2\mathbb{Z})$. The TQFT vector space $V_r(\Sigma_g)$, for $4\mid p\  (=r+2)$, can be decomposed with respect to the spin structures on $\Sigma_g$. Recall for the action of Dehn twist along a curve on the $V_p(\Sigma_g)$ can be described by twisting the curve (make it $-1$-framed) in $\Sigma_g\times\{\frac{1}{2}\}\subset \Sigma_g\times I$, attach a skein element and then push it back in the handlebody. Now we consider the curves in $H_1(\Sigma_g,\mathbb{Z}/2\mathbb{Z})$ as the curves in $\Sigma_g\times\{\frac{1}{2}\}\subset \Sigma_g\times I$, attached with the label $r$, then there is a natural action by pushing it in the handlebody ($\Add$). It is straightforward to show the action is in fact unitary \cite[Prop ~7.5]{BHMV2}. Moreover the product induced by the algebra structure of $\mathcal{S}(\Sigma_g\times I)$ gives the set of curves a structure of finite Heisenberg group (with the associated intersection form on $\Sigma_g$ ) $\mathbb{Z}/2\mathbb{Z}\times H_1(\Sigma_g,\mathbb{Z}/2\mathbb{Z})$ \cite{BHMV2}. Which is abelian and  the characters are giving by $\mathbb{Z}/2\mathbb{Z}$ valued quadratic forms on $H_1(\Sigma_g,\mathbb{Z}/2\mathbb{Z})$. Therefore one gets the decomposition \[V_r(\Sigma_g)=\oplus_{q}V_r(\Sigma_g,q),\] where each $V_r(\Sigma_g,q)$ is the direct sum of the one-dimensional representation associate with the quadratic form $q$. Two orbits (parity) give two invariant subspaces $V_r^0, V_r^1$ under $\Mod(\Sigma_g)$ action and the associate vector spaces for spin structures of the same parity have the same dimensions, which are denoted by $d_r^0(g),d_r^1(g)$ respectively. We have $\dim(V^{\epsilon})=2^{g-1}(2^g+(-1)^{\epsilon})d^{\epsilon}_g$, for $\epsilon\in \{0,1\}$. The formula for $d_r^{\epsilon}(g)$ is given by \cite[Thm~7.16]{BHMV2} 
\begin{equation}\label{spin_dimension}
d_r^{\epsilon}(g)=2^{-2g}(d_r(g)+(\frac{r+2}{2})^{g-1}((-1)^{\epsilon}2^g-1))\ .
\end{equation}
\begin{Thm}
when $g \geq 2,\ r=4l+2\ (l\geq 1)$, the representation $\tilde{h}_r$ is reducible and has at least three irreducible summands.
\end{Thm}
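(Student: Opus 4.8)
The plan is to exploit the spin-structure decomposition $V_r(\Sigma_g)=\bigoplus_{q}V_r(\Sigma_g,q)$, which is available precisely because $r=4l+2$ forces $4\mid p$. The key structural input is that the $\Mod(\Sigma_g)$-action permutes the summands compatibly with its action on the set $Q$ of $\mathbb{Z}/2\mathbb{Z}$-valued quadratic forms, that is $f\cdot V_r(\Sigma_g,q)=V_r(\Sigma_g,f\cdot q)$; this is exactly the statement that $\Mod(\Sigma_g)$ normalizes the finite Heisenberg action of $\mathbb{Z}/2\mathbb{Z}\times H_1(\Sigma_g,\mathbb{Z}/2\mathbb{Z})$. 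Restricting along $\rho\colon\tilde\Gamma_{2g+1}\hookrightarrow\Mod(\Sigma_g)$, every orbit $\mathcal O$ of the image group $\overline\Gamma\subset Sp(2g,\mathbb{Z}/2\mathbb{Z})$ on $Q$ gives a $\tilde h_r$-invariant subspace $\bigoplus_{q\in\mathcal O}V_r(\Sigma_g,q)$. It therefore suffices to show (i) $\overline\Gamma$ has at least three orbits on $Q$ and (ii) each resulting block is nonzero; reducibility with at least three irreducible summands then follows immediately.

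First I would compute the action of the generators on $H_1(\Sigma_g,\mathbb{Z}/2\mathbb{Z})$. By Proposition \ref{DT_property} and the description in Theorem \ref{Thm:Tj}, $T_{\tilde{A_g}}$ acts as the product of the symplectic transvections along $[c_1],[c_3],\dots,[c_{2g-1}]$ and $T_{\tilde{B_g}}$ as the product along $[c_2],[c_4],\dots,[c_{2g}]$, while $\iota_g$ acts trivially and $T_J$ acts by the flip $[c_i]\mapsto[c_{2g+1-i}]$ (recall $T_J(c_i)=c_{2g+1-i}$). The classes $[c_1],\dots,[c_{2g}]$ form a basis of $H_1(\Sigma_g,\mathbb{Z}/2\mathbb{Z})$ since their intersection pattern is the nondegenerate path form $A_{2g}$. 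Now a transvection along $v$ fixes a quadratic form $q$ if and only if $q(v)=1$; hence $q$ is fixed by $T_{\tilde{A_g}}$ iff $q([c_{2i-1}])=1$ for all $i$, and by $T_{\tilde{B_g}}$ iff $q([c_{2i}])=1$ for all $i$. There is exactly one form $q_0$ satisfying both, namely $q_0([c_i])=1$ for every $i$, and this $q_0$ is manifestly invariant under the flip $T_J$. Thus $\{q_0\}$ is a singleton $\overline\Gamma$-orbit.

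To conclude the orbit count, note that $\overline\Gamma\subset Sp(2g,\mathbb{Z}/2\mathbb{Z})$ preserves the Arf invariant, so the even class (of size $2^{g-1}(2^g+1)$) and the odd class (of size $2^{g-1}(2^g-1)$) are each unions of $\overline\Gamma$-orbits. Since for $g\geq 2$ the class containing $q_0$ has more than one element, it breaks up as the singleton $\{q_0\}$ together with at least one further orbit; combined with at least one orbit making up the opposite-parity class, this yields at least three $\overline\Gamma$-orbits, and hence the three invariant subspaces $V_r(\Sigma_g,q_0)$, its complement within its parity class, and the whole opposite-parity class. Each block $\bigoplus_{q\in\mathcal O}V_r(\Sigma_g,q)$ has dimension $|\mathcal O|\,d_r^{\epsilon}(g)$, so nonvanishing reduces to $d_r^{0}(g),d_r^{1}(g)>0$. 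Using (\ref{spin_dimension}): for $g=2$ one has the closed form $d_r(2)=\frac{(r+2)\big((r+2)^2-1\big)}{6}$, whence $d_r^{0}(2)=\frac{1}{16}\big(d_r(2)+\tfrac{3(r+2)}{2}\big)$ and $d_r^{1}(2)=\frac{1}{16}\big(d_r(2)-\tfrac{5(r+2)}{2}\big)$ are both positive for every $r=4l+2\geq 6$; for general $g$ the two extreme terms $j=1,\,r+1$ of the sum defining $d_r(g)$ already exceed $2^{g}+1$, giving $d_r^{1}(g)>0$.

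The conceptual heart, and the main obstacle, is pinning down the $\mathbb{Z}/2\mathbb{Z}$-homology action of $\rho(\tilde\Gamma_{2g+1})$ precisely enough to locate the fixed quadratic form $q_0$ and to guarantee that no accidental coincidence merges the three orbits; once the transvection description of $T_{\tilde{A_g}},T_{\tilde{B_g}}$ and the flip description of $T_J$ are in hand, the existence and uniqueness of $q_0$ is elementary linear algebra over $\mathbb{Z}/2\mathbb{Z}$, and the positivity of the spin dimensions $d_r^{\epsilon}(g)$ across the range $r=4l+2$ is routine.
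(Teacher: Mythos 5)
Your proof is correct, and it reaches the same three-block decomposition as the paper --- $V_r(\Sigma_g,q_0)$, the rest of its parity class, and the opposite parity class, with nonvanishing checked against (\ref{spin_dimension}) --- but the route to the invariant quadratic form is genuinely different. The paper identifies the fixed form geometrically: the multitwists preserve the flat structure from Thurston's construction, so they fix the spin structure $q_\omega$ induced by the associated holomorphic $1$-form, which is computed via the Kontsevich--Zorich index formula $q_\omega(\gamma)=ind_\gamma+1 \pmod 2$; this buys extra structural information, namely an explicit identification of the fixed form with the flat-surface spin structure and its parity $\frac{g(g+1)}{2} \pmod 2$. You instead argue algebraically inside $Sp(2g,\mathbb{Z}/2\mathbb{Z})$: the multitwists act as products of commuting transvections along $[c_1],\dots,[c_{2g}]$, Johnson's criterion says a twist fixes $q$ iff $q([\gamma])=1$, and since the $[c_i]$ form a basis (the $A_{2g}$ intersection pattern is nondegenerate mod $2$ --- note this basis fact is also what makes ``fixed by the product iff fixed by each factor'' legitimate, via independence of the functionals $x\mapsto x\cdot[c_i]$), there is a \emph{unique} common fixed form $q_0$, which of course coincides with the paper's $q_\omega$. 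Your version is more elementary and self-contained (no flat geometry or \cite{Kontsevch_Zorich03} needed, and $T_J$-invariance is even automatic since $T_J=\iota_g^g(T_{\tilde{A_g}}T_{\tilde{B_g}})^gT_{\tilde{A_g}}$), and it additionally yields the singleton-orbit statement, which is slightly stronger than the fixed-point statement the paper uses. You also carry out the positivity of $d_r^0(g),d_r^1(g)$ explicitly --- the closed form $d_r(2)=\frac{(r+2)((r+2)^2-1)}{6}$ for $g=2$ and the extreme-term bound for general $g$ --- where the paper only asserts ``dimension counting''; your computation even exhibits sharpness at $r=2$ (where $d_r^1(2)=0$), explaining the hypothesis $l\geq 1$.
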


\begin{proof}
We choose oriented curves $\{\alpha_i,\beta_i\}_{1\leq i\leq g}$ representing the standard symplectic
basis. The spin structure associated with the flat structure can be described by a quadratic form that assigns number $(ind_{\gamma}+1)$ (mod $2$) to $\gamma$ (for $\gamma\in \{\alpha_i,\beta_i\}_{1\leq i\leq g}$) \cite[Section~3]{Kontsevch_Zorich03}, where $ind_{\gamma}$ is the index of the curve $\gamma$. For example in Figure \ref{fig:J}, we have  $ind_{c_i}=0$ for $1\leq i\leq 6$, $ind_{c_0}=1$ and $ind_{c_7}=2$. Hence it is not hard to calculate its parity, which is equal to $\sum^{g}_{i=1}(0+1)(i-1+1)=\frac{(g+1)g}{2}$ (mod $2$). Now the group generated by $T_A, T_B$ fixes the flat structure, hence it fixes the associated spin structure, denoted by $q_{\omega}$. We have following decomposition $$V_r(\Sigma_g)=V_r(\Sigma_g,q_{\omega})\oplus (V^0_r\cap V_r^{\perp}(\Sigma_g,q_{\omega}))\oplus (V^1_r\cap V_r^{\perp}(\Sigma_g,q_{\omega})) $$ 
from the dimension counting of the TQFT vector spaces associated with spin structures (\ref{spin_dimension}), when $g\geq 2,\ l\geq 1$, dimensions for three summands are all nonzero.

\end{proof}

\begin{appendix}
\section{Maple code}\label{sec:maplecode}
Here is the code for calculating entries of $\mathcal{J},\ \mathcal{T}$ matrices. 
\begin{verbatim}
with(LinearAlgebra);
#Set up
pi := evalf(Pi);
E := evalf(exp(1));

#Unitary (replace it by E^((pi)/(p)I)for non-unitary case)
A := p -> I . (E^(1/2*I*pi/p));

#Quantum integer
nq := (p, n) -> (A(p)^(2*n) - A(p)^(-2*n))/(A(p)^2 - 1/A(p)^2);

#Quantum dimension
dq := (p, i) -> (-1)^i*nq(p, i + 1);

#Quantum factorial
fq := (p, n) -> evalf(product(nq(p, ii), ii = 1 .. n), 15);

#Admissible color
adm := (p, a, b, c) -> 'if'((a + b + c) mod 2 = 0 and 
0 <= min(a + b - c, a - b + c, -a + b + c, 2*p - 4 - a - b - c), 1, 0);

#Quantum 3j symbol
trq := (p, a, b, c) -> 'if'(adm(p, a, b, c) = 1, evalf((-1)^(1/2*a+ 1/2*b + 1/2*c)
*fq(p, -1/2*a + 1/2*b + 1/2*c)*fq(p, 1/2*a - 1/2*b+ 1/2*c)*fq(p, 1/2*a + 1/2*b
- 1/2*c)*fq(p, 1/2*a + 1/2*b + 1/2*c + 1)/(fq(p, a)*fq(p, b)*fq(p, c)), 15), 0);

syq := (p, a1, a2, a3, a4, b1, b2, b3) -> fq(p, b1 - a1)*fq(p, b1 - a2)
*fq(p, b1 - a3)*fq(p, b1 - a4)*fq(p, b2 - a1)*fq(p, b2 - a2)*fq(p, b2 - a3)
*fq(p, b2 - a4)*fq(p, b3 - a1)*fq(p, b3 - a2)*fq(p, b3 - a3)*fq(p, b3 - a4)
*sum((-1)^z*fq(p, z + 1)/(fq(p, b1 - z)*fq(p, b2 - z)*fq(p, b3 - z)
*fq(p, z - a1)*fq(p, z - a2)*fq(p, z - a3)*fq(p, z - a4)), z = max(a1, a2, 
a3, a4) .. min(b1, b2, b3));

#Quantum tetrahedron
Tet := (p, a, b, c, d, e, f) -> 'if'(adm(p, a, b, c)*adm(p, a, e, f)
*adm(p, c, d, e)*adm(p, b, d, f) = 1, evalf(syq(p, 1/2*a + 1/2*b + 1/2*c, 
1/2*a + 1/2*e + 1/2*f, 1/2*c + 1/2*d + 1/2*e, 1/2*b + 1/2*d + 1/2*f, 1/2*a 
+ 1/2*b + 1/2*d + 1/2*e, 1/2*a + 1/2*c + 1/2*d + 1/2*f, 1/2*b + 1/2*c + 1/2*e 
+ 1/2*f)/(fq(p, a)*fq(p, b)*fq(p, c)*fq(p, d)*fq(p, e)*fq(p, f)), 15), 0);

#Quantum 6j symbol
sjq := (p, a, b, i, c, d, j) -> 'if'(adm(p, i, b, c)*adm(p, i, d, a)
*adm(p, c, j, d)*adm(p, b, j, a) = 1, evalf((-1)^i*nq(p, i + 1)
*Tet(p, i, b, c, j, d, a)/(trq(p, i, a, d)*trq(p, i, b, c)), 15), 0);

Fsjq := (p, a, b, i, c, d, j) -> (sqrt((((((qd(p, j)) . (trq(p, b, c, i))) 
. (trq(p, a, d, i))) . (1/dq(p, i))) . (1/trq(p, a, b, j))) 
. (1/trq(p, c, d, j)))) . (sjq(p, a, b, i, c, d, j));

Sjq := (p, a, b, i, c, d, j) -> 'if'(adm(p, i, b, c)*adm(p, i, d, a)
*adm(p, c, j, d)*adm(p, b, j, a) = 1, evalf((-1)^(i + j)*nq(p, i + 1)
*nq(p, j + 1)*Tet(p, i, b, c, j, d, a)/(trq(p, a, b, j)*trq(p, i, a, d)
*trq(p, i, b, c)), 15), 0);

# Twists, S-matrix entries, Global dimension and central charge 
Tw := (p, a) -> (-1)^a*A(p)^(a*(a + 2));
Sm := (p, i, j) -> ((-1)^(i + j)) . (nq(p, (i + 1) . (j + 1)));
DD := p -> 'if'(p mod 2 = 1, add(nq(p, 2*i + 1)^2
, i = 0 .. 1/2*p - 3/2), add(nq(p, i + 1)^2, i = 0 .. p - 2));
P0 := p -> 'if'(p mod 2 = 1, add((Tw(p, 2.*i)) . (nq(p, 2*i + 1)^2)
, i = 0 .. 1/2*p - 3/2), add((Tw(p, i)) . (nq(p, i + 1)^2), i = 0 .. p - 2));
P1 := p -> 'if'(p mod 2 = 1, add((1/Tw(p, 2.*i)) . (nq(p, 2*i + 1)^2)
, i = 0 .. 1/2*p - 3/2), add((1/Tw(p, i)) . (nq(p, i + 1)^2), i = 0 .. p - 2));

#Quantum J coefficient.(unitary)
J := (p, f, e, d, c, b, a) -> 'if'(adm(p, a, b, c)
*adm(p, d, e, f) = 1, evalf(sqrt((-1)^(a + b + c + d + e + f)*nq(p, a + 1)
*nq(p, b + 1)*nq(p, c + 1)*nq(p, d + 1)*nq(p, e + 1)*nq(p, f + 1))
*add(Tw(p, b)*Tw(p, f)*add(Sjq(p, b, f, k, f, b, i)/Tw(p, i), i = 0 .. p - 2)
*add(Tw(p, j)*Sjq(p, d, c, k, c, d, j), j = 0 .. p - 2)
*Tet(p, k, d, d, e, f, f)*(-1)^k*Tet(p, c, a, b, b, k, c)/(Tw(p, d)*Tw(p, c)
*nq(p, k + 1)), k = 0 .. p - 2)/(trq(p, a, b, c)*trq(p, d, e, f)*DD(p)), 15), 0);

#Nonunitary
J1 := (p, f, e, d, c, b, a) -> 'if'(adm(p, a, b, c)
*adm(p, d, e, f) = 1, evalf(nq(p, a + 1)*nq(p, b + 1)*nq(p, c + 1)
*add(Tw(p, b)*Tw(p, f)*add(Sjq(p, b, f, k, f, b, i)/Tw(p, i), i = 0 .. p - 2)
*add(Tw(p, j)*Sjq(p, d, c, k, c, d, j), j = 0 .. p - 2)
*sjq(p, f, d, k, d, f, e)*trq(p, f, f, k)*trq(p, d, d, k)
*Tet(p, c, a, b, b, k, c)/(Tw(p, d)*Tw(p, c)*nq(p, k + 1)^2)
, k = 0 .. p - 2)/(trq(p, a, b, c)*trq(p, a, b, c)*DD(p)), 15), 0);

#Genus two T-matrix
T := (p, a, b, c) -> Tw(p, a)*Tw(p, b);

#computation for p=4 and 5
JJ1 := evalc(Re(Matrix(5, 5, [[J(5, 0, 0, 0, 0, 0, 0), J(5, 0, 0, 0, 0, 2, 2), 
J(5, 0, 0, 0, 2, 0, 2), J(5, 0, 0, 0, 2, 2, 0), J(5, 0, 0, 0, 2, 2, 2)], 
[J(5, 0, 2, 2, 0, 0, 0), J(5, 0, 2, 2, 0, 2, 2), J(5, 0, 2, 2, 2, 0, 2), 
J(5, 0, 2, 2, 2, 2, 0), J(5, 0, 2, 2, 2, 2, 2)], [J(5, 2, 0, 2, 0, 0, 0), 
J(5, 2, 0, 2, 0, 2, 2), J(5, 2, 0, 2, 2, 0, 2), J(5, 2, 0, 2, 2, 2, 0), 
J(5, 2, 0, 2, 2, 2, 2)], [J(5, 2, 2, 0, 0, 0, 0), J(5, 2, 2, 0, 0, 2, 2), 
J(5, 2, 2, 0, 2, 0, 2), J(5, 2, 2, 0, 2, 2, 0), J(5, 2, 2, 0, 2, 2, 2)], 
[J(5, 2, 2, 2, 0, 0, 0), J(5, 2, 2, 2, 0, 2, 2), J(5, 2, 2, 2, 2, 0, 2), 
J(5, 2, 2, 2, 2, 2, 0), J(5, 2, 2, 2, 2, 2, 2)]])));

JJ2 := evalc(Re(Matrix(10, 10, [[J(4, 0, 0, 0, 0, 0, 0), J(4, 0, 0, 0, 0, 1, 1), 
J(4, 0, 0, 0, 0, 2, 2), J(4, 0, 0, 0, 1, 0, 1), J(4, 0, 0, 0, 1, 1, 0), 
J(4, 0, 0, 0, 1, 1, 2), J(4, 0, 0, 0, 1, 2, 1), J(4, 0, 0, 0, 2, 0, 2), 
J(4, 0, 0, 0, 2, 1, 1), J(4, 0, 0, 0, 2, 2, 0)], [J(4, 0, 1, 1, 0, 0, 0), 
J(4, 0, 1, 1, 0, 1, 1), J(4, 0, 1, 1, 0, 2, 2), J(4, 0, 1, 1, 1, 0, 1), 
J(4, 0, 1, 1, 1, 1, 0), J(4, 0, 1, 1, 1, 1, 2), J(4, 0, 1, 1, 1, 2, 1), 
J(4, 0, 1, 1, 2, 0, 2), J(4, 0, 1, 1, 2, 1, 1), J(4, 0, 1, 1, 2, 2, 0)], 
[J(4, 0, 2, 2, 0, 0, 0), J(4, 0, 2, 2, 0, 1, 1), J(4, 0, 2, 2, 0, 2, 2), 
J(4, 0, 2, 2, 1, 0, 1), J(4, 0, 2, 2, 1, 1, 0), J(4, 0, 2, 2, 1, 1, 2), 
J(4, 0, 2, 2, 1, 2, 1), J(4, 0, 2, 2, 2, 0, 2), J(4, 0, 2, 2, 2, 1, 1), 
J(4, 0, 2, 2, 2, 2, 0)], [J(4, 1, 0, 1, 0, 0, 0), J(4, 1, 0, 1, 0, 1, 1), 
J(4, 1, 0, 1, 0, 2, 2), J(4, 1, 0, 1, 1, 0, 1), J(4, 1, 0, 1, 1, 1, 0), 
J(4, 1, 0, 1, 1, 1, 2), J(4, 1, 0, 1, 1, 2, 1), J(4, 1, 0, 1, 2, 0, 2), 
J(4, 1, 0, 1, 2, 1, 1), J(4, 1, 0, 1, 2, 2, 0)], [J(4, 1, 1, 0, 0, 0, 0), 
J(4, 1, 1, 0, 0, 1, 1), J(4, 1, 1, 0, 0, 2, 2), J(4, 1, 1, 0, 1, 0, 1), 
J(4, 1, 1, 0, 1, 1, 0), J(4, 1, 1, 0, 1, 1, 2), J(4, 1, 1, 0, 1, 2, 1), 
J(4, 1, 1, 0, 2, 0, 2), J(4, 1, 1, 0, 2, 1, 1), J(4, 1, 1, 0, 2, 2, 0)], 
[J(4, 1, 1, 2, 0, 0, 0), J(4, 1, 1, 2, 0, 1, 1), J(4, 1, 1, 2, 0, 2, 2), 
J(4, 1, 1, 2, 1, 0, 1), J(4, 1, 1, 2, 1, 1, 0), J(4, 1, 1, 2, 1, 1, 2), 
J(4, 1, 1, 2, 1, 2, 1), J(4, 1, 1, 2, 2, 0, 2), J(4, 1, 1, 2, 2, 1, 1), 
J(4, 1, 1, 2, 2, 2, 0)], [J(4, 1, 2, 1, 0, 0, 0), J(4, 1, 2, 1, 0, 1, 1), 
J(4, 1, 2, 1, 0, 2, 2), J(4, 1, 2, 1, 1, 0, 1), J(4, 1, 2, 1, 1, 1, 0), 
J(4, 1, 2, 1, 1, 1, 2), J(4, 1, 2, 1, 1, 2, 1), J(4, 1, 2, 1, 2, 0, 2), 
J(4, 1, 2, 1, 2, 1, 1), J(4, 1, 2, 1, 2, 2, 0)], [J(4, 2, 0, 2, 0, 0, 0), 
J(4, 2, 0, 2, 0, 1, 1), J(4, 2, 0, 2, 0, 2, 2), J(4, 2, 0, 2, 1, 0, 1), 
J(4, 2, 0, 2, 1, 1, 0), J(4, 2, 0, 2, 1, 1, 2), J(4, 2, 0, 2, 1, 2, 1), 
J(4, 2, 0, 2, 2, 0, 2), J(4, 2, 0, 2, 2, 1, 1), J(4, 2, 0, 2, 2, 2, 0)], 
[J(4, 2, 1, 1, 0, 0, 0), J(4, 2, 1, 1, 0, 1, 1), J(4, 2, 1, 1, 0, 2, 2), 
J(4, 2, 1, 1, 1, 0, 1), J(4, 2, 1, 1, 1, 1, 0), J(4, 2, 1, 1, 1, 1, 2), 
J(4, 2, 1, 1, 1, 2, 1), J(4, 2, 1, 1, 2, 0, 2), J(4, 2, 1, 1, 2, 1, 1), 
J(4, 2, 1, 1, 2, 2, 0)], [J(4, 2, 2, 0, 0, 0, 0), J(4, 2, 2, 0, 0, 1, 1), 
J(4, 2, 2, 0, 0, 2, 2), J(4, 2, 2, 0, 1, 0, 1), J(4, 2, 2, 0, 1, 1, 0), 
J(4, 2, 2, 0, 1, 1, 2), J(4, 2, 2, 0, 1, 2, 1), J(4, 2, 2, 0, 2, 0, 2), 
J(4, 2, 2, 0, 2, 1, 1), J(4, 2, 2, 0, 2, 2, 0)]])));

\end{verbatim}

\end{appendix}
\section{Data availability statement}
All data generated or analysed during this study are included in this article
\bibliographystyle{abbrv}
\bibliography{Reference}

\end{document}